  \crefname{theorem}{Theorem}{Theorems}
  \crefname{thm}{Theorem}{Theorems}
  \crefname{lemma}{Lemma}{Lemmas}
  \crefname{lem}{Lemma}{Lemmas}
  \crefname{remark}{Remark}{Remarks}
  \crefname{prop}{Proposition}{Propositions}
\crefname{notation}{Notation}{Notations}
\crefname{claim}{Claim}{Claims}
  \crefname{defn}{Definition}{Definitions}
  \crefname{corollary}{Corollary}{Corollaries}
  \crefname{section}{Section}{Sections}
  \crefname{figure}{Figure}{Figures}
    \crefname{assumption}{Assumption}{Assumptions}
\newtheorem{thm}{Theorem}[section]
\newtheorem{lemma}[thm]{Lemma}
\newtheorem{corollary}[thm]{Corollary}
\newtheorem{prop}[thm]{Proposition}
\newtheorem{defn}[thm]{Definition}
\newtheorem{question}[thm]{Question}
\newtheorem{assumption}[thm]{Assumptions}
\numberwithin{equation}{section}
\theoremstyle{definition}
\newtheorem{remark}[thm]{Remark}
\def \1  {(1)}
\def \2 {(2)}
\def \ve {\varepsilon}
\def\P{\mathbb{P}}
\def\E{\mathbb{E}}
\def\C{\mathbb{C}}
\def\R{\mathbb{R}}
\def\D{\mathbb{D}}
\def\H{\mathbb{H}}
\def  \p- {p\textunderscore}
\def\eps{\varepsilon}
\def\e{\text{e}}
\def\ph{\varphi}
\DeclareMathOperator{\gff}{GFF}
\def \sou {\mathsf D_u}
\def \sor {\mathsf D_r}
\def \sos {\mathsf D_s}
\def  \ve {\varepsilon}
\DeclareMathOperator{\supp}{Support}
\definecolor{RED}{rgb}{1,0,0}\definecolor{BLUE}{rgb}{0,0,1} 
\title{$(1+\eps)$ moments suffice to characterise the GFF }
\author{Nathana\"el Berestycki\thanks{Supported in part by EPSRC grant EP/L018896/1, the University of Vienna, and FWF grant ``Scaling limits in random conformal geometry''.} \and Ellen Powell \and Gourab Ray\thanks{Supported in part by NSERC 50311-57400 and University of Victoria start-up 10000-27458}}
\begin{document}

\maketitle

\begin{abstract}
We show that there is ``no stable free field of index $\alpha\in (1,2)$'', in the following sense.
It was proved in \cite{BPR18} that subject to a \emph{fourth moment assumption}, any random generalised function on a domain $D$ of the plane, satisfying conformal invariance and a natural domain Markov property, must be a constant multiple of the Gaussian free field. In this article we show that the existence of $(1+\ve)$ moments is sufficient for the same conclusion. A key idea is a new way of exploring the field, where (instead of looking at the more standard circle averages) we start from the boundary and discover averages of the field with respect to a certain ``hitting density'' of It\^o excursions.
\end{abstract}

\section{Introduction}

The \textbf{Gaussian free field} (GFF) is a universal object believed (and in many cases proved) to govern the fluctuation statistics of many natural
 random surface models \cite{GOS, NS, MillerGL,Kenyon_GFF, dubedat_torsion, BLRdimers,BLRtorus,DubedatGheissari,Li} (see, e.g., \cite{LQGnotes,LNWP} for an introduction and survey of some recent developments). Although the GFF can be defined in any dimension, this article is concerned with the planar continuum version, which satisfies two special properties; namely, \textbf{conformal invariance} and a \textbf{domain Markov property}. The former roughly entails that applying a conformal map to a GFF in any domain produces a GFF in the image domain. The latter says, informally, that for any $D' \subset D \subset \C$, the conditional law of the GFF on $D$ restricted to $D'$, given its behaviour outside of $D'$, is that of the harmonic extension of the GFF from $\partial D'$ to $D'$ plus an independent GFF in $D'$. However, one major technical issue with defining the GFF is that it cannot be made sense of as a random function. It is instead defined as a random generalised function, which in this article we view as a stochastic process indexed by smooth, compactly supported test functions. As a result, some preparation is required in order to rigorously formulate the above properties.

We will now formally state our assumptions, which are essentially the same as in \cite{BPR18} except for the moment condition and the Dirichlet\footnote{We use the terminology ``Dirichlet'' and ``zero'' boundary conditions for the same notion throughout} boundary condition (we will comment after the theorem on the necessity of this adaptation).

Assume that for every simply connected domain $D\subset \C$, a stochastic process $h^D = (h^D_\phi)_{\phi \in C_c^\infty(D) }$ indexed by test functions is given. Assume further that each $h^D$ is linear in $\phi$: that is, for any $\lambda, \mu \in \R$ and $\phi, \phi'\in C_c^\infty(D)$, $$h^D_{\lambda \phi + \mu \phi'} = \lambda h^D_\phi + \mu h^D_{\phi'} \text{ almost surely. }$$ We then write, with an abuse of notation,
$$(
h^D, \phi) := h^D_\phi \text{ for } \phi \in C_c^\infty(D).
$$
We denote by $\Gamma^D $ the law of the stochastic process $h^D$. Thus $\Gamma^D$ is a probability distribution on $\R^{C_c^\infty(D)}$ equipped with the product topology. By Kolmogorov's extension theorem $\Gamma^D$ is characterised by its consistent finite-dimensional distributions: i.e., by the joint law of $(h^D, \phi_1), \ldots, (h^D, \phi_k)$ for any $k \ge 1$ and any $\phi_1, \ldots, \phi_k \in C_c^\infty(D)$.

We finally recall that the $H^{-1}(D)$ norm of a function $f\in C_c^\infty(D)$ is given by \begin{equation}
	\label{eqn:hminus1}
(f,f)_{-1}:=((-\Delta)^{-1/2}f,(-\Delta)^{-1/2}f)=(f,(-\Delta^{-1})f)= \iint_{D\times D} G_D(x,y) f(x)f(y) \, dx dy	\end{equation}
where $G_D$ is the Green function with Dirichlet boundary conditions in $D$. 

In the following, we write $\D=\{w\in \C: |w|<1\}$ and for $z\in \C$, $\eps>0$, we set $B_z(\eps):=\{w\in \C: |w-z|<\eps \}$. When $z$ lies in an open set $U\subset \C$, we write $d(z,\partial U):=\inf_{y\in \partial U} |y-z|$.

Let $D \subset \C$ be a proper simply connected open domain, and let $h^D$ be a sample from $\Gamma^D$.
\begin{assumption} We make the following assumptions.
\label{ass:ci_dmp}
\begin{enumerate}[(i)]

\item \textbf{(Moments)} For every $\phi\in C_c^\infty(D)$ and some $\xi>1$:
$$\E[(h^D,\phi)]=0 \;\; \text{and} \;\; \E[|(h^D,\phi)|^\xi]<\infty. $$

\item \textbf{(Continuity and Dirichlet boundary conditions)}  If $\phi_n\to \phi$ in $C_c^\infty(D)$, then $(h^D,\phi_n)\to (h^D,\phi)$ in probability as $n\to \infty$. Moreover, suppose that $(\phi_n)_{n \ge 1}$ is a sequence of non-negative test functions in $C_c^\infty(D)$, such that $d_n:=\sup\{d(z,\partial D)\, : \, z\in\text{Support}(\phi_n)\}\to 0$ as $n\to \infty$, and $\phi_n \to 0$ in $H^{-1}(D)$.
Then we have that $(h^D,\phi_n) \to 0$ in probability and in $L^1$ as $n\to \infty$.

\item \textbf{(Conformal invariance.)} Let $f: D \to D'$ be a bijective conformal map. Then
$
\Gamma^{D}  = \Gamma^{D'}\circ f,
$
where $\Gamma^{D'} \circ f$ is the law of the stochastic process $(h^{D'}, |(f^{-1})'|^2 (\phi \circ f^{-1}))_{\phi \in C_c^\infty(D)}$.

\item \textbf{(Domain Markov property)}. Suppose $D' \subset D$ is a simply connected Jordan domain. Then  we can decompose
$
h^D=  h^{D'}_D+\ph_D^{D'},
$
where:
\begin{itemize}
	\item $ h^{D'}_D$ is independent of $\ph_D^{D'}$;
	\item $(\ph_D^{D'},\phi)_{\phi\in C_c^\infty(D)}$ is a stochastic process indexed by $C_c^\infty(D)$ that is a.s. linear in $\phi$ and such that when we restrict to $C_c^\infty(D')$, $$(\ph_D^{D'},\phi)_{\phi\in C_c^\infty(D')}$$ a.s. corresponds to integrating against a harmonic function in $D'$. 
	\item $((h^{D'}_D,\phi))_{\phi\in C_c^\infty(D)}$ is a stochastic process indexed by $C_c^\infty(D)$, such that $(h^{D'}_D,\phi)_{\phi\in C_c^\infty(D')}$ has law $\Gamma^{D'}$ and $(h^{D'}_D,\phi)=0$ a.s. for any $\phi$ with $\supp(\phi)\subset D\setminus D'$.
\end{itemize}
\end{enumerate}
\end{assumption}

Observe that in light of \textit{(iii)}, the Dirichlet boundary condition \textit{(ii)} holds in one simply connected domain $D$ if and only if it holds in all simply connected domains. Indeed, suppose that it holds in $D$ and let $f:D\to D'$ be a conformal map. Then if $(\phi_n)_{n}\to 0\in H^{-1}(D')$, we have by conformal invariance of the Green function that $\tilde{\phi}_n:=|f|^2 (\phi_n\circ f)$ converges to $0$ in $H^{-1}(D)$, and since $(h^{D'},\phi_n)$ is equal in law to $(h^D,\tilde{\phi}_n)$, that $(h^{D'},\phi_n)\to 0$ in probability and in $L^1$ as $n\to \infty$.

We now comment on the main changes with respect to the assumptions in \cite{BPR18}. As already mentioned, the main change is the fact that we have replaced a moment of order four in (i) with a moment of order $\xi$ where $\xi>1$.
Beyond this, we have slightly adapted the Dirichlet boundary condition (assumption (ii)). Indeed, it may not even be apparent to the reader at first sight why we call (ii) a Dirichlet boundary condition. Suppose $\phi_n$ is a sequence of functions in $C_c^\infty(D)$, whose support converges  to a subset of the boundary $\partial D$, in the sense that $d_n \to 0 $ (where $d_n$ is defined in (ii)). If $h$ is a Gaussian free field in $D$ (with Dirichlet boundary conditions), we may be tempted to believe that $(h, \phi_n) \to 0$. Unfortunately, without any additional assumption this is not necessarily the case, even if $\|\phi_n\|_1 $ is bounded (to see why, consider the uniform distribution in a ball of radius $\eps$ at distance $\eps$ from the boundary). Instead, in order for $(h, \phi_n)$ to converge to zero we need an extra condition which guarantees that the mass of $\phi_n$ is sufficiently ``spread out''. There are several different ways that such a condition could be formulated. In \cite{BPR18} we assumed that for $D= \D$, $(h, \phi_n)\to 0 $ for sequences $\phi_n $ which are bounded in $L^1$ and \emph{rotationally symmetric}. However, in the present article, we will need $\phi_n$ to be asymptotically supported on a \emph{proper} subset of the boundary (see the definition of $p_u$ in \eqref{eq:sine}) and so rotational invariance of the support of $\phi_n$ is not sufficient. Instead we assume that $\phi_n$ converges to 0 in $H^{-1}(D)$. This turns out to be the most convenient meaning of ``sufficiently spread out'' in the present setting.	\medskip

Before stating our results, we recall the definition of a Gaussian free field (with Dirichlet boundary conditions) on a domain $D \subset \C$.

\begin{defn} \label{def::gff} A mean zero Gaussian free field $h_{\gff} =h^D_{\gff}  $
	with zero boundary conditions is a stochastic process indexed by test functions $(h_{\gff}, \phi)_{\phi \in C_c^\infty(D)}$ such that:
	
	\begin{itemize}
		\item $h_{\gff}$ is a centered Gaussian field; for any $n\ge 1$ and any set of test functions $\phi_1,\cdots, \phi_n \in C_c^\infty(D)$, $((h_{\gff},\phi_1),\cdots, (h_{\gff},\phi_n))$ is a Gaussian random vector with mean ${\mathbf{0}}$;
		\item for any two test functions $\phi_1,\phi_2 \in C_c^\infty(D)$,
		$$
		\E[(h_{\gff},\phi_1) , (h_{\gff},\phi_2)] = \int_{D} G^D(z,w) \phi_1(z)\phi_2(w)dzdw
		$$
		where $G^D$ is the Green's function with Dirichlet boundary conditions on $D$.
	\end{itemize}
\end{defn}

The main technical content of this paper is summarised by the following proposition, whose most important aspect states that moments of order $\xi$ as in Assumptions \ref{ass:ci_dmp}, together with domain Markov property and conformal invariance, imply a moment of order 4.

\begin{prop}\label{prop:fourth_moment}
Assume that $(\Gamma^D)_D$ satisfies Assumptions \ref{ass:ci_dmp}. Then in fact:
\begin{enumerate}[(1)]
\item $\E[(h^D,\phi)^4]<\infty $
for every $\phi\in C_c^\infty(D)$;
\item the bilinear form $K_2^D$ on $C_c^\infty(D)\times C_c^\infty(D)$  defined by
\[ \E[(h^D,\phi)(h^D,\phi')]=K_2^D(\phi,\phi'), \quad \quad \phi, \phi' \in C_c^\infty(D)\]
is continuous; and
\item the convergence in (ii) of Assumptions \ref{ass:ci_dmp} also holds in $L^2$.
\end{enumerate}
\end{prop}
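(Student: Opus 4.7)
The plan is to exploit conformal invariance and the domain Markov property to decompose $(h^D,\phi)$ as a sum of independent, identically distributed pieces coming from conformally nested layers near $\partial D$, and to combine the $(1+\ve)$-moment bound in (i) with the scaling rigidity forced by conformal invariance to upgrade moments to fourth order. By (iii) it suffices to work in $D=\D$. Using the Dirichlet boundary condition (ii), one first defines, for small $u>0$, a boundary test density $p_u$ built from the hitting density of It\^o excursions at hyperbolic scale $u$ from a fixed boundary point $z_0$, as alluded to in the introduction and abstract. Approximating $p_u$ in $H^{-1}(\D)$ by elements of $C_c^\infty(\D)$, assumption (ii) gives an $L^1$-limit $X_u:=(h^\D,p_u)$, which by (i) lies in $L^{1+\ve}$.

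The heart of the argument is to exploit the domain Markov property (iv) across the nested subdomains $\D\setminus\overline{B_{z_0}(u)}$, together with conformal invariance (iii) under hyperbolic translations toward $z_0$, to show that for a geometric sequence $u_k\downarrow 0$ the increments $Y_k:=X_{u_{k+1}}-X_{u_k}$ are independent and identically distributed; their common law is thus a conformal invariant of the theory. Combining the $(1+\ve)$-moment hypothesis with this rigidity, specifically by controlling the growth in $N$ of the $L^{1+\ve}$ norm of $\sum_{j=1}^N Y_j = X_{u_N}-X_{u_0}$ and comparing it to what a non-Gaussian stable law of index $\alpha\in(1,2)$ would predict, one shows that $Y_k$ cannot have such heavy tails, and in fact must have moments of all orders. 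Consequently so do the $X_u$.

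Finally, the moment control for the family $(X_u)_{u>0}$ is transferred back to a generic test function $\phi\in C_c^\infty(\D)$: the Markov property lets one express $(h^\D,\phi)$ as a sum involving such boundary averages plus an independent ``bulk'' piece whose contribution is controlled away from $\partial\D$, yielding item~(1). A careful quantitative reading of the argument produces a bound of the form $\E[(h^\D,\phi)^2]\le C\,(\phi,\phi)_{-1}$, from which item~(2) follows, since $C_c^\infty(D)\hookrightarrow H^{-1}(D)$ is continuous; item~(3) then follows from the same $L^2$-bound applied along a sequence $\phi_n\to 0$ in $H^{-1}$, combined with the $L^1$-convergence already granted by (ii), via uniform integrability. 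The main obstacle is the second paragraph above: justifying that the combination of the $(1+\ve)$ moment and conformal scaling genuinely forces the increments $Y_k$ to be non-stable. This is precisely where the It\^o excursion density construction appears to be essential, as it gives access to a family of averages which is well-defined under only a $(1+\ve)$ moment assumption and whose joint law is rigid enough to be incompatible with any non-Gaussian stable behavior.
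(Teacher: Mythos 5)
There is a genuine gap, and it sits exactly where you flag it yourself: the claim that the increments $Y_k=X_{u_{k+1}}-X_{u_k}$ of the boundary (sine/excursion) averages are independent \emph{and identically distributed}. Independence of increments does follow from the domain Markov property, but the conformal self-maps of the domain fixing the boundary point $z_0$ (in $\H$ with $z_0\mapsto 0$, these are the scalings $z\mapsto cz$) act on the excursion-hitting averages by \emph{Brownian scaling}, i.e.\ they multiply the process by $\sqrt c$ while rescaling the parameter; they do not carry the increment over one scale-window onto the increment over the next with the same law. So the increments over a geometric sequence of scales are scaled copies of one another, not i.i.d., and the ``compare the sum $\sum_j Y_j$ with a stable law'' step has no footing. (Conversely, the process that genuinely has stationary independent increments --- circle averages around an \emph{interior} point --- is not known a priori to satisfy Brownian scaling; this tension is precisely the difficulty, since a self-similar L\'evy process of index $1/2$ would trivially be Brownian motion, with no moment assumption at all.) Your proposal offers no substitute for the missing stationarity, and the final assertions are also unsupported: the bound $\E[(h^\D,\phi)^2]\le C(\phi,\phi)_{-1}$ cannot be extracted from the sketch (the covariance structure is only identified after the characterisation theorem), so items (2) and (3) do not follow as claimed.

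What actually closes this gap in the paper is different in two essential ways. First, instead of stationarity one uses a \emph{harness} property: for $s<u<r$, the conditional behaviour of the sine average at scale $u$ given the field outside the annulus is the linear interpolation of its values at $s$ and $r$ (a consequence of harmonicity of $\varphi$ and Lemma~\ref{lem:harmonic_sines}). A new characterisation of Brownian motion (Proposition~\ref{prop:char_BM}, extending Weso{\l}owski) shows that independent increments $+$ Brownian scaling $+$ harness $+$ moments of \emph{any} positive order force the process to be a multiple of Brownian motion; the moment upgrade there is a bootstrap using the identity $Y(1)(Y(2)-Y(1))=\tfrac14 Y(2)^2-Z^2$ with $Z$ independent of $Y(2)$, together with Lemma~\ref{lem:indep_moments}. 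Second, one must transfer Gaussianity from sine averages in $\H$ to circle averages of $h^\D$ around an interior point: this is done by averaging over rotations and showing $\E[(h^{\D,A},p_u)\mid h^\D]=\sqrt u\, h^\D_{1/\sqrt u}(0)$, which requires quantitative control of the harmonic part $\varphi_\D^{\D^+}$ near $(-1,1)$ (Lemma~\ref{lem:phi_der}); this is the one place where $\xi>1$ is genuinely used. Only then does one get continuity, hence Gaussianity, of the circle-average process, and the fourth moment of $(h^D,\phi)$ is obtained by approximating $(h^\D,\phi)$ by $\int \hat h^\D_{r_\eps(z)}(z)\phi(z)\,dz$ and proving uniform $L^6$ bounds from Gaussianity; parts (2) and (3) then follow from uniform $L^4$ bounds plus stochastic continuity, not from an $H^{-1}$ estimate.
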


As a direct consequence we obtain the following theorem, which is the main result of this paper.

\begin{thm}\label{thm::characterisation_gff}
	Suppose the collection of laws $\{\Gamma^D\}_{D\subset \mathbb{C}}$ satisfy Assumptions \ref{ass:ci_dmp} and let $h^D$ be a sample from $\Gamma^D$. Then there exists $\sigma\ge 0$ such that $h^D = \sigma h_{\gff}^D$ in law, as stochastic processes.
\end{thm}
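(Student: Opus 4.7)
The plan is to deduce Theorem \ref{thm::characterisation_gff} from Proposition \ref{prop:fourth_moment} by reducing to the main characterisation theorem of \cite{BPR18}. All the genuine difficulty is contained in Proposition \ref{prop:fourth_moment}: once that has been proved, upgrading from $(1+\ve)$ moments to fourth moments, the remaining work is to verify that the hypotheses of \cite{BPR18} are satisfied in the present setup and then invoke that theorem as a black box.

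First, I would apply Proposition \ref{prop:fourth_moment} to $(\Gamma^D)_D$. This immediately yields $\E[(h^D,\phi)^4]<\infty$ for every $\phi\in C_c^\infty(D)$, continuity of the bilinear covariance form $K_2^D$, and the strengthening of the Dirichlet boundary condition in Assumption \ref{ass:ci_dmp}\emph{(ii)} from convergence in probability and $L^1$ to convergence in $L^2$. At this stage we have in hand a collection of laws satisfying conformal invariance, the domain Markov property, a full fourth moment hypothesis, and $L^2$ continuity.

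Next, I would verify that this suffices to invoke the main theorem of \cite{BPR18}. The only nontrivial matching issue is the precise formulation of the Dirichlet boundary condition, since \cite{BPR18} phrases it in terms of rotationally symmetric, $L^1$-bounded sequences on $\D$, whereas here we use $H^{-1}(D)$ convergence. To bridge these, I would argue as follows: if $(\phi_n)_{n\geq 1}$ is a non-negative, rotationally symmetric, $L^1$-bounded sequence in $C_c^\infty(\D)$ whose support has $d_n\to 0$, then a direct estimate on the Dirichlet Green's function $G_{\D}$ near the boundary (using the explicit formula $G_{\D}(z,w)=\tfrac{1}{2\pi}\log\bigl|\tfrac{1-z\bar w}{z-w}\bigr|$ and rotational symmetry to average out the singular logarithm) shows $(\phi_n,\phi_n)_{-1}\to 0$, i.e.\ $\phi_n\to 0$ in $H^{-1}(\D)$. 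Hence our assumption implies theirs. Since all remaining hypotheses match, \cite{BPR18} yields the existence of $\sigma\ge 0$ such that $h^{\D}\stackrel{(d)}{=}\sigma h_{\gff}^{\D}$ as stochastic processes on $C_c^\infty(\D)$.

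Finally, I would propagate the conclusion to every simply connected domain $D\subsetneq \C$ using conformal invariance (Assumption \ref{ass:ci_dmp}\emph{(iii)}): if $f\colon \D\to D$ is a conformal bijection, then $(h^D,\phi)\stackrel{(d)}{=}(h^{\D},|(f)'|^2(\phi\circ f))$ for every $\phi\in C_c^\infty(D)$, and the same identity holds for $h_{\gff}$ by conformal invariance of the Dirichlet Green's function; hence $h^D\stackrel{(d)}{=}\sigma h_{\gff}^D$. The main obstacle in this deduction is really the boundary-condition translation step, which rests on classical potential-theoretic estimates for $G_{\D}$; the deeper and more delicate work---the existence of fourth moments, via the Itô-excursion hitting-density exploration---is packaged entirely into Proposition \ref{prop:fourth_moment}.
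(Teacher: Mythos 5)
Your proposal is correct and follows essentially the same route as the paper, whose proof of Theorem \ref{thm::characterisation_gff} is exactly the observation that it is a direct consequence of Proposition \ref{prop:fourth_moment} together with \cite[Theorem 1.6]{BPR18}. The extra details you supply (that rotationally symmetric, $L^1$-bounded sequences supported near $\partial\D$ converge to $0$ in $H^{-1}(\D)$, so the present Dirichlet boundary assumption implies the one in \cite{BPR18}, and the transfer to general simply connected domains via conformal invariance) are sound and are in fact implicitly used by the paper (the $H^{-1}$ fact appears in the proof of Lemma \ref{lem:unicity_decomposition}).
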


\begin{proof}
This is a direct consequence of Proposition \ref{prop:fourth_moment} and \cite[Theorem 1.6]{BPR18}.
\end{proof}

\paragraph{Proof idea:} In order to explain the new ideas required for Theorem \ref{thm::characterisation_gff}, it is helpful to first recall the main steps in the proof of \cite[Theorem 1.6]{BPR18}. \medskip

\emph{Sketch of proof of \cite[Theorem 1.6]{BPR18}.} The proof of Theorem 1.6 in \cite{BPR18} can be broken into two distinct parts: (1) showing that the field is Gaussian (i.e., that $h^D$ is a Gaussian process for each $D$) and (2) showing that it has the correct covariance structure.
In fact, once Gaussianity is known, proving (2) is rather straightforward. It boils down to the fact that the Greens' function is characterised by harmonicity away from the diagonal and logarithmic blow-up along the diagonal -- see \cite{BPR18}.

Proving (1) is rather more challenging. The key step in \cite{BPR18} is to show that ``circle averages'' around points are jointly Gaussian. That is, for any finite set of points, the joint law of the circle averages is Gaussian. The circle average process of a Gaussian free field $h^D$ around a point $z\in D$ is, roughly speaking, the process $(h,
\phi_t)_{t\ge 0}$, where $\phi_t$ is uniform measure on the circle of radius $\e^{-t}$ around $z$. More precision is required for a rigorous definition, since the $\phi_t$ are not smooth test functions, but this can be dealt with by approximating the $\phi_t$ appropriately. Once it is known that circle averages are jointly Gaussian, it is easy to deduce (1), because the field can be approximated by circle averages with small radii, and limits of Gaussians are Gaussian.

To address the question of showing Gaussianity of circle averages, let us consider the case where $D=\D$ is the unit disc, and we take averages around a single point: the origin. It is well known and easy to see that for a GFF in $\D$, the circle average process around $z = 0$ is a constant multiple of Brownian motion. For our given process $h^\D$, the domain Markov property together with scale invariance (a special case of conformal invariance) shows that the circle average process has independent and stationary increments. However, one cannot immediately deduce that it is Brownian motion, which would of course yield Gaussianity. More work is required to eliminate processes with jumps (e.g.\ compound Poisson processes, symmetric stable processes etc.). In \cite{BPR18}, a fourth moment assumption on the field was used to apply Kolmogorov's criterion, and thereby prove that the circle average process possesses an almost surely continuous modification. This modification must then be Brownian motion and, in particular, Gaussian. In fact, we can generalise this argument to show that arbitrary linear combinations of circle averages around multiple points must also be Gaussian, which completes the key step of the proof.

\emph{Sketch of proof of Proposition \ref{prop:fourth_moment}.}
The major challenge in this article is to reach the same conclusion \emph{without} the fourth moment assumption. In contrast to the above approach, we will simply aim to prove Gaussianity of single circle averages, rather than linear combinations of averages around multiple points.  Note that this does not immediately imply \emph{joint} Gaussianity of circle averages (for which significantly more work would be needed).
However, it is enough (with a little extra work) to prove existence of fourth moments (\cref{prop:fourth_moment}) and given the result of \cite{BPR18}, this concludes the proof of \cref{thm::characterisation_gff}.

To summarise: the main step of the proof \emph{in this article} is to show existence of an a.s.\ continuous modification of the circle average process around $z=0$ for $h^\D$ (the given field in the disk $\D$) assuming only $\xi$th moments of the field for some $\xi>1$. See \cref{cor:circ_avg_cont} and  \cref{prop:circ_av_bm}. Achieving this is not merely a technical upgrade of the idea used in \cite{BPR18}; a new input is required.

Namely, in \eqref{eq:sine} we introduce  a certain \textbf{sine-average process} for the field $h^\H$, on semi-circles in the upper half plane. Its value at a given semi-circle can be viewed as the average of $h^\H$ with respect to a hitting measure for half plane
\textbf{It\^o excursions} from $0$. As a result, one can easily construct a parametrisation (with respect to the semi-circle radius), under which the resulting process satisfies:
\begin{itemize}
	\item (one-dimensional) Brownian scaling; and crucially
	\item a certain \textbf{``harness''} property, as introduced by Hammersley in \cite{harness} (see also \cite{Wes93,Williams_harness}).
\end{itemize} The increments of this process are easily checked to be independent; however, there is no reason \emph{a priori} why they should be stationary. Nonetheless, we are able to formulate a (new) characterisation of Brownian motion in terms of this harness property and use this to show that the sine-average process must be a Brownian motion. This characterisation is given in Proposition \ref{prop:char_BM}, and is an extension of a result proved in \cite{Wes93}. Crucially, our extension does not require as many moments as \cite{Wes93}; in fact moments of \emph{any} order $\xi>0$ suffice.

 From this point, we use rotational invariance and the domain Markov property to ``average out'' the semi-circle sine-averages of $h^\H$ and relate them to circle averages of $h^\D$. The consequence is existence of a continuous modification of the circle-average process around $0$ for $h^\D$. For this last step, one needs to precisely control the behaviour of the harmonic part in a domain Markov decomposition of $h^\D$, which forms the main technical part of the argument. This is where the assumption $\xi >1$ is used. Having done this, the proof of Proposition \ref{prop:fourth_moment} is concluded. \qed

\begin{remark}
 Consider a family of fields $(h^D)_D$ in simply connected domains $D$, that assign values $(h^D, \phi)$ to smooth test functions $\phi$. Theorem \ref{thm::characterisation_gff} shows that conformal invariance and the domain Markov property (in the sense of Assumptions \ref{ass:ci_dmp}) are incompatible with these $(h^D,\phi)$s having $\alpha$-stable (rather than Gaussian) distributions, for any value of the index $\alpha \in (1,2)$. Comparing to the better understood one-dimensional situation, a (1d) $\alpha$-stable process has different scaling properties to those of (1d) Brownian motion. Since scaling is a special type of conformal mapping, this suggests that ``natural $\alpha$-stable analogues'' of the GFF cannot enjoy conformal invariance. Our Theorem can be viewed as a rigourous justification of this informal heuristic when $\alpha \in (1,2)$.

We mention here that some variants of higher dimensional stable fields have been defined and studied before, see \cite{kumar1972stable} and also \cite{cipriani2016divisible} for a limiting construction. It will be interesting to find a suitable characterisation theorem for such fields.
\end{remark}

In view of the above remark, it is natural to wonder whether \emph{any} moments assumptions are needed to characterize the GFF.
\begin{question}\label{Q:xi}
What are the minimal moment assumption necessary for  \cref{thm::characterisation_gff} to hold? Do moments of order $\xi$ for any $\xi>0$ suffice?
\end{question}

\paragraph{Acknowledgements} We thank Scott Sheffield and Juhan Aru for some inspiring discussions. Part of this work was carried while all three authors visited Banff on the occasion of the programme ``Dimers, Ising Model, and their Interactions''. We would like to thank the organisers as well as the team in BIRS for this opportunity and their hospitality. Finally, we would like to thank the anonymous referees for many suggestions that helped us to improve the presentation of the paper.

\section{Some elementary results and estimates}
\subsection{Independent random variables}

\begin{lemma}
	\label{lem:indep_moments}
	Suppose that $(X,Y)$ are real-valued random variables defined on the same probability space, and that $X$ and $Y$ are independent. Then for any $\xi>0$,  \[\mathbb{E}[|X+Y|^\xi]<\infty \Rightarrow \mathbb{E}[|X|^\xi]<\infty \text{ and } \mathbb{E}[|Y|^\xi]<\infty. \]
\end{lemma}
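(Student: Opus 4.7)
The plan is to use Fubini's theorem together with independence to pass from a moment bound on $X+Y$ to a moment bound on $X+y_0$ for some specific real number $y_0$, and then exploit the triangle inequality (in a form valid for all $\xi>0$) to deduce a moment bound on $X$.

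More precisely, let $\mu_Y$ denote the law of $Y$. Using independence, Fubini's theorem gives
\[
\infty > \mathbb{E}[|X+Y|^\xi] = \int_{\mathbb{R}} \mathbb{E}[|X+y|^\xi]\, d\mu_Y(y).
\]
Since the integral on the right is finite, the integrand must be finite for $\mu_Y$-almost every $y$; in particular, there exists at least one $y_0 \in \mathbb{R}$ with $\mathbb{E}[|X+y_0|^\xi]<\infty$.

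The second step is to remove the shift by $y_0$. I would split into two cases according to whether $\xi \ge 1$ or $\xi \in (0,1)$. If $\xi \ge 1$, convexity of $t \mapsto t^\xi$ on $[0,\infty)$ gives $|X|^\xi = |(X+y_0)-y_0|^\xi \le 2^{\xi-1}(|X+y_0|^\xi + |y_0|^\xi)$, and taking expectations yields $\mathbb{E}[|X|^\xi]<\infty$. If $\xi \in (0,1)$, I would instead use the subadditivity $(a+b)^\xi \le a^\xi + b^\xi$ for $a,b \ge 0$, which gives $|X|^\xi \le |X+y_0|^\xi + |y_0|^\xi$ and the same conclusion. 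The assertion for $Y$ follows by symmetry (swapping the roles of $X$ and $Y$ in the argument).

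There is no real obstacle here; the only subtlety is making sure the argument covers $\xi \in (0,1)$, where the $L^\xi$-``norm'' is not a norm and Minkowski's inequality fails, so one has to use subadditivity of $t\mapsto t^\xi$ rather than convexity. Note also that no integrability of $Y$ is needed to extract the shift $y_0$: any $y_0$ in the support of $\mu_Y$ at which the integrand is finite suffices, and such a $y_0$ exists simply because a positive-measure set of such $y$'s exists in $\mathbb{R}$.
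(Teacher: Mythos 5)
Your proof is correct, and it takes a genuinely different route from the one in the paper. You use independence through Fubini--Tonelli to write $\mathbb{E}[|X+Y|^\xi]=\int \mathbb{E}[|X+y|^\xi]\,d\mu_Y(y)$, extract a single value $y_0$ with $\mathbb{E}[|X+y_0|^\xi]<\infty$, and then remove the deterministic shift via the elementary inequality $|a+b|^\xi \le C_\xi(|a|^\xi+|b|^\xi)$, handled separately for $\xi\ge 1$ (convexity) and $\xi\in(0,1)$ (subadditivity of $t\mapsto t^\xi$); all steps are valid for every $\xi>0$, and you are right that Tonelli requires no integrability assumptions since the integrand is nonnegative. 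The paper instead argues by truncation: it fixes $M$ with $\mathbb{P}(|Y|\le M)\ge 1/2$, notes that on the event $\{|Y|\le M,\ |X|\ge 2M\}$ one has $|X|\le 2|X+Y|$ pointwise, and then uses independence (through the factor $\mathbb{P}(|Y|\le M)\ge 1/2$) to bound $\mathbb{E}[|X|^\xi\mathbf{1}_{\{|X|\ge 2M\}}]$ by a constant multiple of $\mathbb{E}[|X+Y|^\xi]$. The two mechanisms exploit independence differently: yours decouples by conditioning on a value of $Y$, the paper's by restricting $Y$ to a bounded set of probability at least $1/2$ and comparing $|X|$ to $|X+Y|$ pathwise. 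A small practical difference is that the paper's argument yields an explicit quantitative bound, $\mathbb{E}[|X|^\xi]\le (2M)^\xi + 2^{1+\xi}\mathbb{E}[|X+Y|^\xi]$ with $M$ a quantile of $|Y|$, whereas your $y_0$ is non-constructive; but for the qualitative statement of the lemma this makes no difference, and your version is arguably more streamlined.
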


\begin{proof}
Fix some $M$ such that $\mathbb{P}(|Y|\le M)\ge 1/2$ and note that $|X/(X+Y)|\mathbf{1}_{\{|Y|\le M, |X|\ge 2M\}}\le 2$ (it is less than 1 if $X$ and $Y$ have the same sign, and less than $2$ otherwise). Then $\mathbb{E}[|X|^\xi \mathbf{1}_{\{|X|\le 2M\}}]\le (2M)^\xi$ and
\[ \mathbb{E}\left[|X|^\xi \mathbf{1}_{\{|X|\ge 2M\}}\right]\le 2 \mathbb{E}\left[\left|\frac{X}{X+Y}\right|^\xi \, \left|X+Y\right|^\xi \mathbf{1}_{\{|Y|\le M, |X|\ge 2M\}}\right] \le 2^{1+\xi} \mathbb{E}\left[\left|X+Y\right|^\xi \right]<\infty.\]

Symmetrically, $\mathbb{E}[|Y|^\xi]<\infty$.
\end{proof}

\begin{lemma}[Von Bahr--Esseen \cite{vBE65}]\label{lem:vbe} Let $r \ge 1$.
\begin{enumerate}[(i)]
	\item Suppose that $X,Y$ are random variables with $\mathbb{E}[|X|^r]<\infty, \mathbb{E}[|Y|^r]<\infty, \mathbb{E}[Y|X]=0$ a.s.\ Then
	$\mathbb{E}[|X+Y|^r]\ge \mathbb{E}[|X|^r]$.
	\item Suppose  in addition that $r\le 2$ and that  $(X_1,\cdots, X_n)$ are independent, centred random variables with $\mathbb{E}[|X_j|^r]<\infty$ for $1\le j \le n$. Then
	$\mathbb{E}[|\sum_{j=1}^n X_j |^r]\le 2 \sum_{j=1}^n \mathbb{E}[|X_j|^r]$.
\end{enumerate}	
	
\end{lemma}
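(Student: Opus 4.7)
For part (i) the approach is conditional Jensen's inequality. Since $t \mapsto |t|^r$ is convex for $r \ge 1$, conditioning on $X$ and applying Jensen to $X+Y$ gives
\[ \E[|X+Y|^r \mid X] \ge |\E[X+Y \mid X]|^r = |X + \E[Y \mid X]|^r = |X|^r \quad \text{a.s.}, \]
using $\E[Y \mid X] = 0$. Taking unconditional expectations yields the bound.

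For part (ii) I would induct on $n$, the base case $n = 1$ being trivial. The engine is the pointwise elementary inequality
\begin{equation*}
|a+b|^r \le |a|^r + r|a|^{r-1} \operatorname{sgn}(a) \, b + 2|b|^r, \qquad r \in [1,2], \; a, b \in \R, \tag{$*$}
\end{equation*}
applied with $a = S_{n-1} := \sum_{j<n} X_j$ and $b = X_n$. Independence of $S_{n-1}$ and $X_n$, combined with $\E[X_n] = 0$, makes the middle term vanish in expectation; the integrability needed to justify this factorisation, namely $\E[|S_{n-1}|^{r-1}] \le 1 + \E[|S_{n-1}|^r] < \infty$, follows from the inductive hypothesis. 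Taking expectations in $(*)$ gives $\E[|S_n|^r] \le \E[|S_{n-1}|^r] + 2\E[|X_n|^r]$, and iterating closes the induction with the stated constant $2$.

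The main obstacle is establishing $(*)$. By the scaling $(a,b) \mapsto (\lambda a, \lambda b)$ and the symmetry $(a,b) \mapsto (-a,-b)$, one reduces to the two one-variable bounds (a) $(1+t)^r - 1 - rt \le 2 t^r$ for $t \ge 0$, and (b) $|1-s|^r - 1 + rs \le 2 s^r$ for $s \ge 0$. For (a), a short convexity argument on $g(t) := t^r - [(1+t)^r - 1 - rt]$ (verify $g(0) = g'(0) = 0$ and $g''(t) \ge 0$ for $t > 0$ using $r \in [1,2]$) in fact gives the sharper bound with constant $1$. For (b), split at $s=1$: the regime $s \in [0,1]$ is handled via Taylor-with-integral-remainder using that $(1-s)^{r-2}$ is locally bounded, while the regime $s \ge 1$ uses $(s-1)^r \le s^r$ and absorbs the linear term $rs$. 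The constant $2$ is sharp, as seen by taking $r \to 1^+$ and $s \to +\infty$. A cleaner but slightly looser alternative bypasses $(*)$ via symmetrization: replacing each $X_j$ by $X_j - \tilde X_j$ with $\tilde X_j$ an independent copy, conditioning on $|X_j - \tilde X_j|$, and using Jensen plus the concavity of $x \mapsto x^{r/2}$ for $r \le 2$, one obtains (ii) with the cruder constant $2^r$; this would suffice for the paper's applications though not for the sharp constant $2$ in the statement.
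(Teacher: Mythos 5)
Your proof is correct: conditional Jensen's inequality for (i) is exactly the standard argument, and for (ii) the pointwise bound $|a+b|^r\le |a|^r+r|a|^{r-1}\operatorname{sgn}(a)\,b+2|b|^r$ for $r\in[1,2]$ is valid (e.g.\ one can check the $s\in[0,1]$ regime of your case (b) even more simply: $h'(s)=r[1-(1-s)^{r-1}]\le rs$, so $h(s)\le s^2\le 2s^r$), and the induction with the middle term killed by independence and centring goes through with the stated constant $2$. The paper itself gives no proof of this lemma, citing \cite{vBE65}, and your argument is essentially the classical one from that reference, so there is nothing to compare beyond noting that your write-up is a sound self-contained substitute for the citation.
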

\subsection{Immediate consequences of the domain Markov property}

\begin{lemma}\label{lem:unicity_decomposition}
The assumption of zero boundary conditions implies that the domain Markov decomposition from (iv) is unique.
\end{lemma}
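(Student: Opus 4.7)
The plan is as follows. Assume that $h^D = h_1 + \varphi_1 = h_2 + \varphi_2$ are two decompositions of $h^D$ satisfying (iv), and define $u := \varphi_1 - \varphi_2 = h_2 - h_1$. When restricted to $\phi \in C_c^\infty(D')$, the process $u$ is a.s.\ integration against a random harmonic function on $D'$ (the difference of those representing $\varphi_1$ and $\varphi_2$); I will abuse notation and also denote this random harmonic function by $u$. The main goal is to prove that $u \equiv 0$ a.s.; from this, the a.s.\ equality $(h_1, \phi) = (h_2, \phi)$ holds for $\phi \in C_c^\infty(D')$, while for $\supp(\phi) \subset D \setminus D'$ it is immediate from (iv).

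By the conformal invariance (iii), together with the preservation of harmonicity and the conformal covariance of the $H^{-1}$ norm, I reduce to the case $D' = \D$. Since both $h_1$ and $h_2$ restricted to $C_c^\infty(\D)$ have law $\Gamma^{\D}$, assumption (ii) applies separately to each: for any sequence $(\phi_n)_n \subset C_c^\infty(\D)$ with $d_n \to 0$ and $\|\phi_n\|_{H^{-1}(\D)} \to 0$, we have $(h_i, \phi_n) \to 0$ in probability for $i = 1,2$, and therefore $(u, \phi_n) \to 0$ in probability.

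The key construction is a sequence of test functions that separates the spherical-harmonic content of $u$. For each integer $m \geq 0$, set
\[
\phi_n^{(m)}(re^{i\theta}) := \psi_n(r)\cos(m\theta)
\]
(and analogously $\psi_n(r)\sin(m\theta)$), where $\psi_n \in C_c^\infty((0,1))$ is a smooth non-negative bump supported in $[1 - 1/n, 1 - 1/(2n)]$ with $\int_0^1 \psi_n(r)\,r\,dr = 1$. Clearly $d_n \leq 1/(2n) \to 0$. Moreover, using the spherical-harmonic expansion of the Dirichlet Green's function on $\D$, orthogonality of the trigonometric basis isolates a single angular mode, and a direct estimate yields $\|\phi_n^{(m)}\|_{H^{-1}(\D)}^2 = O(1/n) \to 0$, since the radial factor $1 - (r \vee \rho)^{2m} = O(m/n)$ on the joint support of $\psi_n(r)\psi_n(\rho)$ (and for $m=0$ the $\log(1/(r\vee\rho)) = O(1/n)$ term does the job).

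Finally, for a.e.\ realization, the harmonic function $u$ on $\D$ admits the expansion $u(re^{i\theta}) = a_0 + \sum_{m \geq 1}(b_m \cos m\theta + c_m \sin m\theta) r^m$ with random coefficients, and a pathwise calculation gives
\[
(u, \phi_n^{(m)}) = \pi b_m \int_0^1 \psi_n(r) r^{m+1} dr \longrightarrow \pi b_m \quad \text{a.s.\ as } n \to \infty
\]
(and analogously for $c_m$ via the sine variant, and for $a_0$ with $m = 0$). Combined with the convergence to $0$ in probability obtained above, this forces every Fourier coefficient of $u$ to be $0$ a.s.; a countable union then yields $u \equiv 0$ a.s. The main technical obstacle is the $H^{-1}$ bound on $\phi_n^{(m)}$, which relies on the explicit formula for the disk's Dirichlet Green's function and motivates the conformal reduction to $D' = \D$; once the bound is in hand the Fourier argument is clean.
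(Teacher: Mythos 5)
Your overall strategy is sound and close in spirit to the paper's: both proofs test the difference $u=\varphi_1-\varphi_2$ against smooth functions squeezed onto $\partial D'$ (pulled back through a conformal map so that the $H^{-1}$ smallness can be checked on $\D$), invoke the Dirichlet boundary condition (ii) for the two copies of $\Gamma^{D'}$ to force these pairings to vanish in probability, and use harmonicity to see that the pairings nonetheless remember $u$. The implementation differs: the paper takes radially symmetric, mass-one bumps, so that by the mean value property the pairing with $u$ is exactly $u$ evaluated at the chosen centre; varying the conformal map over a countable dense set of points $z\in D'$ then yields $u\equiv 0$. You instead fix a single centre and recover every Fourier coefficient of $u$ by modulating the bump with $\cos(m\theta)$ and $\sin(m\theta)$. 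Your pathwise computation $(u,\phi_n^{(m)})=\pi b_m\int\psi_n(r)r^{m+1}\,dr\to\pi b_m$ and the bound $\|\phi_n^{(m)}\|_{-1}^2=O(1/n)$ (the $1/m$ from the Green function expansion against the $O(m/n)$ radial factor) are correct. The trade-off is that you need the explicit angular expansion of $G_\D$, where the paper needs only the mean value property and an essentially trivial $H^{-1}$ check; in exchange you avoid quantifying over a dense family of points and conformal maps.

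One step does not match the hypothesis as stated: Assumption (ii) applies only to \emph{non-negative} test functions, and $\psi_n(r)\cos(m\theta)$, $\psi_n(r)\sin(m\theta)$ change sign for $m\ge 1$, so (ii) cannot be invoked for them directly. This is easily repaired by linearity of the field in $\phi$: apply (ii) to the two non-negative functions $\psi_n(r)\bigl(1+\cos(m\theta)\bigr)$ and $\psi_n(r)$ (both supported in the same thin annulus, both with $H^{-1}$ norm $O(1/n)$ since only the $0$-th and $m$-th angular modes contribute) and subtract. With that patch, and with the routine check — also implicitly needed in the paper's proof — that the pulled-back functions $|f'|^2(\phi_n^{(m)}\circ f)$ have supports whose distance to $\partial D'$ tends to $0$ (using that $D'$ is a Jordan domain, so $f^{-1}$ extends continuously to $\overline{\D}$), your argument goes through and establishes the lemma at the same level of generality as the paper's.
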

\begin{proof} This is very similar to the proof of \cite[Lemma 1.4]{BPR18}, but we include it since some arguments are slightly different.
	
Suppose that we have two such decompositions:
\begin{equation}\label{eqn::DMP_uniqueness}
h^D = h^{D'}_D+\ph_D^{D'} =  \tilde{h}^{D'}_D+\tilde{\ph}_D^{D'}.
\end{equation}
Pick any $z\in D'$ and let $f:D'\to \D$ be a conformal map that sends $z$ to $0$. Further, let $(\phi_n)_{n\ge 1}$ be a sequence of nonnegative radially symmetric, mass one functions in $C_c^\infty(\D)$, that are eventually supported outside any $K\Subset \D$. It is easy to check that $\phi_n\to 0$ in $H^{-1}(\D)$ as $n\to \infty$, and if we set $\tilde{\phi}_n := |f'|^2 (\phi_n \circ f)$ for each $n$, then (as discussed below \cref{ass:ci_dmp})  $\tilde{\phi}_n$ converges to $0$ in $H^{-1}(D')$ as well. Hence, the assumption of Dirichlet boundary condition implies that $(h^{D'}_D-\tilde{h}^{D'}_D,\tilde{\phi}_n )\to 0$ in probability as $n\to \infty$. In turn, by (\ref{eqn::DMP_uniqueness}), this means that $(\ph_D^{D'}-\tilde{\ph}_D^{D'},\tilde{\phi}_n) \to 0$ in probability.

However, since $(\ph_D^{D'} - \tilde{\ph}_D^{D'})$ restricted to $D'$ is a.s. equal to a harmonic function, and since the $\phi_n$'s are radially symmetric with mass one, we have that
	\[ (\ph_D^{D'}-\tilde{\ph}_D^{D'},\tilde{\phi}_n) =((\ph_D^{D'}-\tilde{\ph}_D^{D'})\circ f^{-1}, \phi_n)=(\ph_D^{D'}-\tilde{\ph}_D^{D'})\circ f^{-1}(0)=\ph_D^{D'}(z)-\tilde{\ph}_D^{D'}(z)\] for every $n$. This implies that for each fixed $z\in D'$, $\ph_D^{D'}(z)=\tilde{\ph}_D^{D'}(z)$ a.s. Applying this to a countable dense subset of $z\in D'$, together with the fact that $(h^D,\phi)=(\ph_D^{D'},\phi)=(\tilde{\ph}_D^{D'},\phi)$ a.s. for any $\phi$ supported outside of $D'$, then implies that $\ph_D^{D'}$ and $\tilde \ph_D^{D'}$ are a.s. equal as stochastic processes indexed by $C_c^\infty(D)$.
\end{proof}

	Now, suppose that $D''\subset D'\subset D$ and $h^D$ is a sample from $\Gamma^D$. Applying the domain Markov property to $h^D$ in $D'$ and $D''$ respectively, we can write $h^D=h_D^{D'}+\varphi_D^{D'} \text{ and } h^D=h_D^{D''}+\varphi_{D}^{D''}.$ We can further decompose $h_D^{D'}=h_{D'}^{D''}+\varphi_{D'}^{D''}$ by applying the domain Markov property to $h_D^{D'}$ in $D''$.
\begin{lemma}\label{lem:nested_dmp}	
	As stochastic processes indexed by $C_c^\infty(D)$, we have that $h_D^{D''}=h_{D'}^{D''}$ and $\varphi_D^{D''}=\varphi_D^{D'}+\varphi_{D'}^{D''}$ a.s. (where the latter is an independent decomposition).
\end{lemma}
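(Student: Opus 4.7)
The natural approach is to invoke the uniqueness of the domain Markov decomposition, \cref{lem:unicity_decomposition}. Substituting the decomposition of $h_D^{D'}$ in $D''$ into the outer decomposition of $h^D$ in $D'$ yields
\[ h^D \;=\; h_{D'}^{D''} \;+\; \bigl(\varphi_D^{D'} + \varphi_{D'}^{D''}\bigr). \]
If I can show that this expression constitutes a valid domain Markov decomposition of $h^D$ in $D''$ (in the sense of Assumptions \ref{ass:ci_dmp}(iv)), then by \cref{lem:unicity_decomposition} it must agree with $h^D = h_D^{D''} + \varphi_D^{D''}$, and both claims of the lemma follow immediately.

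My plan is therefore to verify the three DMP axioms. Harmonicity of $\varphi_D^{D'} + \varphi_{D'}^{D''}$ on $D''$, when restricted to $C_c^\infty(D'')$, is essentially immediate: $\varphi_D^{D'}$ is harmonic on $D'$ (hence on $D''\subset D'$) and $\varphi_{D'}^{D''}$ is harmonic on $D''$ by construction, so the sum is too. That $h_{D'}^{D''}$ restricted to $C_c^\infty(D'')$ has law $\Gamma^{D''}$ is exactly the defining content of the inner application of the DMP. The additional vanishing requirement for $\phi$ with $\supp(\phi)\subset D\setminus D''$ splits into the sub-cases $\supp(\phi)\subset D'\setminus D''$ (given directly by the inner DMP) and $\supp(\phi)\subset D\setminus D'$ (which holds because $h_D^{D'}$ vanishes on such $\phi$ by the outer DMP, forcing $h_{D'}^{D''}$ to vanish there once $\varphi_{D'}^{D''}$ is extended harmonically to $C_c^\infty(D)$ in the same way); a smooth partition of unity supported compactly in $D'\setminus \overline{D''}$ and in $D\setminus \overline{D''}$ handles general $\phi$ by linearity.

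The step requiring more care is the independence assertion $h_{D'}^{D''}\perp (\varphi_D^{D'} + \varphi_{D'}^{D''})$. Here I have two input independences: from the outer DMP, $h_D^{D'}\perp \varphi_D^{D'}$, and from the inner DMP applied to $h_D^{D'}$, $h_{D'}^{D''}\perp \varphi_{D'}^{D''}$. Since $h_{D'}^{D''}$ and $\varphi_{D'}^{D''}$ are both $\sigma(h_D^{D'})$-measurable, the first independence upgrades to the joint statement $(h_{D'}^{D''},\varphi_{D'}^{D''})\perp \varphi_D^{D'}$. Combining this with $h_{D'}^{D''}\perp \varphi_{D'}^{D''}$ via a short $\pi$-system argument (for measurable $A,B,C$, $\P(h_{D'}^{D''}\in A,\varphi_{D'}^{D''}\in B,\varphi_D^{D'}\in C) = \P(h_{D'}^{D''}\in A)\P(\varphi_{D'}^{D''}\in B)\P(\varphi_D^{D'}\in C)$, which factors as $\P(h_{D'}^{D''}\in A)\cdot \P(\varphi_{D'}^{D''}\in B,\varphi_D^{D'}\in C)$ by re-bundling) yields $h_{D'}^{D''} \perp (\varphi_{D'}^{D''},\varphi_D^{D'})$ jointly, and hence independence from any measurable function of the pair, in particular their sum. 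I expect this double-independence bookkeeping to be the only real conceptual subtlety; once it is in place, uniqueness does the rest.
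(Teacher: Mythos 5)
Your proposal is correct and follows essentially the same route as the paper: write $h^D = h_{D'}^{D''} + (\varphi_{D'}^{D''} + \varphi_D^{D'})$ by substituting the inner decomposition into the outer one, check it qualifies as a domain Markov decomposition in $D''$, and conclude by the uniqueness statement of \cref{lem:unicity_decomposition}. The paper's own proof is exactly this (stated in one line), so your additional verification of the DMP axioms and the independence bookkeeping simply fills in details the paper leaves implicit.
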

\begin{proof}
	This follows by writing $h^D=h_D^{D''}+\varphi_D^{D''}$ and $h^D=h_D^{D'}+\varphi_{D}^{D'}=h_{D'}^{D''}+\varphi_{D'}^{D''}+\varphi_{D}^{D'}$ and applying Lemma \ref{lem:unicity_decomposition}. \end{proof}

\begin{lemma}\label{lem:harm_ci}
		Suppose $D$ is simply connected and that $D'\subset D$ is a simply connected Jordan domain. Then if $h^D=h^{D'}_D+\varphi_D^{D'}$ is the domain Markov decomposition of $h^D$ in $D'$ and $f:D\to f(D)$ is conformal, with $f(D')\subset f(D)$ a Jordan domain and $h^{f(D)}=h_{f(D)}^{f(D')}+\varphi_{f(D)}^{f(D')}$, we have that
		\[\varphi_D^{D'}=\varphi^{f(D')}_{f(D)}\circ f \text{ in law}\]
		as harmonic functions in $D'$.
		
	\end{lemma}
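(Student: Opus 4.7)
The plan is to push the given decomposition $h^D=h_D^{D'}+\varphi_D^{D'}$ forward through $f$ using conformal invariance (Assumption \ref{ass:ci_dmp}(iii)), observe that the result is itself a valid domain Markov decomposition (in $f(D')$) of a copy of $h^{f(D)}$, and then invoke the uniqueness statement of Lemma \ref{lem:unicity_decomposition} to identify it with the canonical decomposition $(h^{f(D')}_{f(D)},\varphi^{f(D')}_{f(D)})$.

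Concretely, conformal invariance asserts that the process obtained by evaluating $h^D$ on $|f'|^2(\phi\circ f)$ for $\phi\in C_c^\infty(f(D))$ has law $\Gamma^{f(D)}$, and this process splits as the sum of the two independent processes
\[
\tilde h_\phi:=\big(h_D^{D'},\,|f'|^2(\phi\circ f)\big), \qquad \tilde\varphi_\phi:=\big(\varphi_D^{D'},\,|f'|^2(\phi\circ f)\big).
\]
I would then verify the four defining properties of the domain Markov decomposition for $(\tilde h,\tilde\varphi)$ in $f(D')$. Independence is inherited from the original decomposition. For $\phi\in C_c^\infty(f(D'))$, the pullback $|f'|^2(\phi\circ f)$ lies in $C_c^\infty(D')$, so conformal invariance applied inside $D'$ shows that the restriction of $\tilde h$ to such $\phi$ has law $\Gamma^{f(D')}$. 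If instead $\phi$ is supported in $f(D)\setminus f(D')$ then its pullback is supported in $D\setminus D'$, where $h_D^{D'}$ vanishes a.s., giving $\tilde h_\phi=0$ a.s. Finally, writing $H$ for the a.s.\ harmonic function on $D'$ associated to $\varphi_D^{D'}|_{D'}$, a change of variables gives, for $\phi\in C_c^\infty(f(D'))$,
\[
\tilde\varphi_\phi=\int_{D'} H(z)|f'(z)|^2\phi(f(z))\,dz=\int_{f(D')}\!(H\circ f^{-1})(w)\,\phi(w)\,dw,
\]
and $H\circ f^{-1}$ is harmonic on $f(D')$ because the composition of a harmonic function with a conformal map is harmonic.

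Having produced two valid domain Markov decompositions of the same $\Gamma^{f(D)}$-law in $f(D')$, I would couple $h^{f(D)}$ with the pushforward of $h^D$ so that they coincide almost surely; Lemma \ref{lem:unicity_decomposition} then forces $\tilde\varphi=\varphi_{f(D)}^{f(D')}$ a.s., which on translating back through $f$ gives the harmonic-function identity $\varphi_D^{D'}=\varphi_{f(D)}^{f(D')}\circ f$ in law on $D'$, as required. There is no serious obstacle in the argument; the only technical care needed is tracking the $|f'|^2$ conformal factor through the change of variables and confirming that the pushed-forward harmonic part really corresponds to $H\circ f^{-1}$ on $f(D')$. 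The substantive ingredient is Lemma \ref{lem:unicity_decomposition} itself, whose proof relies on the Dirichlet boundary condition from Assumption \ref{ass:ci_dmp}(ii).
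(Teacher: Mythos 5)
Your proof is correct and follows essentially the same route as the paper: conformal invariance (Assumption \ref{ass:ci_dmp}(iii)) together with the uniqueness of the domain Markov decomposition (Lemma \ref{lem:unicity_decomposition}), plus conformal invariance of harmonicity and a change of variables. You merely spell out explicitly the step the paper leaves implicit, namely that the pushed-forward pair $(\tilde h,\tilde\varphi)$ is itself a valid domain Markov decomposition of a $\Gamma^{f(D)}$-distributed field, which is a fine (and harmless) elaboration.
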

\begin{proof}
For $\phi\in C_c^\infty(D')$ let us denote $\phi^f(z) = |(f^{-1})'|^2 \phi \circ f^{-1}(z) $, so that $\phi^f\in C_c^\infty(f(D'))$. Then by conformal invariance (\cref{ass:ci_dmp}(iii)) it follows that
	\begin{equation*}
	(h^{D} , \phi ) \overset{(d)}{=} (h^{f(D)}, \phi^f) \text{ and } (h^{D'} , \phi ) \overset{(d)}{=} (h^{f(D')}, \phi^f).
	\end{equation*}
By uniqueness of the domain Markov decomposition (\cref{lem:unicity_decomposition}), it then follows that
	$$
	(\varphi_{D}^{D'} , \phi) \overset{(d)}{=} (\varphi_{f(D)}^{f(D')}, \phi^f)
	$$
	and since $\varphi$ is harmonic, this is exactly the statement that
	$$
	\int_{D'}  \varphi_{D}^{D'} (z)\phi(z)dz \overset{(d)}{=} \int_{f(D')} \varphi_{f(D)}^{f(D')}(z)\phi^f(z)dz = \int_{D'} \varphi_{f(D)}^{f(D')} (f(w))\phi(w)dw,
	$$
	where the last equality is just the change of variables formula. Since this holds for all $\phi \in C_c^\infty(D')$, this completes the proof. 	
\end{proof}

\subsection{A priori moment bounds}

We are going to give some bounds on the moments of harmonic functions arising from the domain Markov property. Note that if $z\in D'\subset D$ and $\varphi_D^{D'}$ is such a function, then by harmonicity we can write $\varphi_D^{D'}(z)=(\varphi_D^{D'},\phi)=(h^D,\phi)-(h_D^{D'},\phi)$ for some properly chosen $\phi\in C_c^\infty(D')\subset C_c^\infty(D)$ (e.g., take $\phi$ to be a spherically symmetric bump function which integrates to 1). Therefore \[\mathbb{E}[|\varphi_D^{D'}(z)|^p]<\infty\] for all $0\le p\le \xi$. Moreover, if $D''\subset D'$, then by \cref{lem:nested_dmp} and \cref{lem:vbe}(i), we have
\begin{equation}\label{eq:mono_moments}
\mathbb{E}[|\varphi_D^{D'}(z)|^p] \le \mathbb{E}[|\varphi_D^{D''}(z)|^p]
\end{equation}
for all $p\in [1,\xi]$. (Note that $\E(\varphi_{D'}^{D''}(z)|\varphi_D^{D'}(z))=0$, since $\varphi_{D'}^{D''}$ and $\varphi_{D}^{D'}$ are independent and $\E(\varphi_{D'}^{D''}(z))=\E((h^D,\phi)-(h_D^{D'},\phi))=0$ by assumption. Thus we are justified in applying \cref{lem:vbe}(i).)

\begin{lemma}\label{lem:moment_bound}
	Suppose that $D'\subset D$ and that $z\in D'$. Then there exists a universal constant $C$ (i.e., not depending on $z,D,D'$) such that
	for all $p\in [0,\xi\wedge 2]$
	\[ \mathbb{E}[|\varphi_D^{D'}(z)|^p]\le C \left(\log\left(\frac{d(z,\partial D)}{d(z,\partial D')}\right)\vee 1\right)
	\]
\end{lemma}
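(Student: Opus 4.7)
The plan is to reduce via conformal invariance (\cref{lem:harm_ci}) and the monotonicity \eqref{eq:mono_moments} to the case $(D, z, D') = (\D, 0, B_0(\tilde r))$ with $\tilde r \gtrsim r/R$ (writing $R = d(z,\partial D)$, $r = d(z,\partial D')$), and then to iterate the nested domain Markov decomposition (\cref{lem:nested_dmp}) over a dyadic sequence of radii. This expresses the harmonic value at $0$ (up to $L^p$-domination) as a sum of independent, centred, identically distributed random variables, to which Von Bahr--Esseen (\cref{lem:vbe}(ii)) applies directly, giving a bound linear in the number of dyadic scales, which is $O(\log(1/\tilde r))$. The only non-routine ingredient is the Koebe-distortion bookkeeping used in the first reduction.

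In detail, \eqref{eq:mono_moments} gives $\E[|\varphi_D^{D'}(z)|^p] \le \E[|\varphi_D^{B_z(r)}(z)|^p]$. Let $g:D\to\D$ be the Riemann map with $g(z)=0$. Applying the Koebe $1/4$-theorem to $g^{-1}$ yields $|g'(z)|\ge 1/(4R)$, while applying it to $w\mapsto g(z+rw)$ gives $g(B_z(r))\supset B_0(r|g'(z)|/4)\supset B_0(\tilde r)$ with $\tilde r:=r/(16R)$. Combining \cref{lem:harm_ci} with \eqref{eq:mono_moments} then produces
\[\E[|\varphi_D^{D'}(z)|^p]\le \E[|\varphi_\D^{B_0(\tilde r)}(0)|^p],\]
and since $\log(1/\tilde r)\le \log 16 + \log(R/r)$, it remains to establish $\E[|\varphi_\D^{B_0(\tilde r)}(0)|^p]\le C(|\log \tilde r|\vee 1)$ for every $\tilde r\in(0,1)$.

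Set $N=\lceil\log_2(1/\tilde r)\rceil$ and $B_k=B_0(2^{-k})$, so that $B_0=\D$ and $B_N\subset B_0(\tilde r)$. By \eqref{eq:mono_moments} and iterating \cref{lem:nested_dmp} along the chain $B_0\supset B_1\supset\cdots\supset B_N$,
\[\E[|\varphi_\D^{B_0(\tilde r)}(0)|^p]\le \E[|\varphi_\D^{B_N}(0)|^p],\qquad \varphi_\D^{B_N}(0)=\sum_{k=0}^{N-1}\varphi_{B_k}^{B_{k+1}}(0),\]
with mutually independent summands. Applying \cref{lem:harm_ci} to the scaling $w\mapsto 2^k w$, each summand has the common law of the centred variable $X:=\varphi_\D^{B_0(1/2)}(0)$, and $c_p:=\E[|X|^p]<\infty$ by the moment assumption whenever $p\le \xi$. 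For $p\in[1,\xi\wedge 2]$, \cref{lem:vbe}(ii) then yields
\[\E[|\varphi_\D^{B_N}(0)|^p]\le 2Nc_p\le C(|\log\tilde r|\vee 1),\]
with $C$ depending only on $p$ and the family $(\Gamma^D)_D$; since $c_p$ is a bounded function of $p\in[0,\xi\wedge 2]$ (via $|x|^p\le 1+|x|^{\xi\wedge 2}$), $C$ can in fact be chosen independently of $p$. The range $p\in[0,1)$ follows from the case $p=1$ and the inequality $|x|^p\le 1+|x|$, using $|\log\tilde r|\vee 1\ge 1$.
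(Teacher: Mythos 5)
Your proof is correct and follows essentially the same route as the paper's: reduce via \eqref{eq:mono_moments}, \cref{lem:harm_ci} and the Koebe quarter theorem to concentric balls, iterate \cref{lem:nested_dmp} over dyadic scales to get a sum of independent copies of $\varphi_\D^{B_0(1/2)}(0)$, and conclude with \cref{lem:vbe}(ii). The only (immaterial) difference is that you map to $\D$ first and shrink dyadic balls there, whereas the paper grows dyadic balls inside $D$ and uses Koebe at the end to control the leftover term $\varphi_D^{B_N}(z)$.
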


\begin{proof}
Let $r:=d(z,\partial D')/2$ and $R:=d(z,\partial D)/2$. By Jensen's inequality we need only consider the case $p=\xi$. In this case, since $\xi>1$ and $B_z(r)\subset D'$, we may further assume by \eqref{eq:mono_moments} that $D'=B_z(r)$.
	
Now we iteratively apply \cref{lem:nested_dmp}. Let $B_k=B_z(2^k r)$ for $k\in \mathbb{N}_0$, and let $N:=\sup_{k\in \mathbb{N}_0} B_k\subset D$ so that $ N\le \log(R/r)/\log(2)$.
Then we may write
\[ \varphi_{D}^{D'}(z)=\varphi_D^{B_N}(z)+\sum_{k=0}^{N-1} \varphi_k(z)\]
where the $\varphi_k(z)$ are independent and, by conformal invariance, each distributed as $\varphi_{\D}^{\D/2}(0)$. Therefore by \cref{lem:vbe}(ii), it follows that
\[ \mathbb{E}[|\varphi_D^{D'}(z)|^\xi]\le 2( \mathbb{E}[|\varphi_D^{B_N}(z)|^\xi] + N \mathbb{E}[|\varphi_{\D}^{\D/2}(0)|^\xi]).\]
Now $\mathbb{E}[|\varphi_{\D}^{\D/2}(0)|^\xi]$ is bounded by some universal constant. Moreover, so is $\mathbb{E}[|\varphi_D^{B_N}(z)|^\xi]$: if $f:D\to \D$ maps $z$ to $0$, then $f(B_N)\supset (1/32)\D$ by the Koebe quarter theorem, and it therefore follows from conformal invariance and \eqref{eq:mono_moments}  that $\mathbb{E}[|\varphi_D^{B_N}(z)|^\xi]\le \mathbb{E}[|\varphi_{\D}^{(1/32)\D}(0)|^\xi]$). This completes the proof.
\end{proof}

\section{Sine-averages and harmonic functions}\label{sec:sine_avgs}
In the following we will denote the upper unit semi disc $\D\cap \H$ by $\D^+$.  For $r>0$, we denote by $r\D^+$ the scaled semi disc $\{z\in \H: |z|<r\}$, and for compactness, write \[\mathsf D_u:=\frac{1}{\sqrt{u}}\D^+; \text{ for } u>0.\]

For $u>0$, we define $p_u$ to be the measure that integrates against $\phi\in C_c(\C)$ as
\begin{equation}\label{eq:sine}
(\phi,p_u)= p_u(\phi):=\sqrt{u} \int_{0}^{\pi}  \sin(\theta) \phi\left(\frac{\e^{i\theta}}{\sqrt{u}}\right) \, d\theta.
\end{equation}

Note that $p_u$ is supported on the circle of radius $r_u = 1/ \sqrt{u}$ and that its total mass is $2/r_u = 2 \sqrt{u}$.

The motivation for defining these measures comes from the fact that $\mathfrak{h}(re^{i\theta}) = \frac1r\sin(\theta)$ is harmonic in the upper half plane with zero boundary conditions (except at the origin). In fact, $\mathfrak{h}$ can be interpreted as the hitting density on a circle of radius $r$, for an It\^o excursion in the upper half plane starting from zero. While our proofs can be written without referring to this interpretation, it may be useful for the intuition nonetheless, so we will now explain how to state this more precisely.

We start by recalling some background about such excursions (see Chapter 5.2 in \cite{Lawler-book} for further details). Let $\P_{i \eps}$ denote the law of Brownian motion starting from $i\eps$, killed when it leaves the upper half plane $\H$. By definition, the \textbf{It\^o excursion measure} from zero is the (infinite) measure $\mathcal{N}$ obtained as the vague limit
$$
\mathcal{N} : = \lim_{\eps \to 0} \frac1{\eps} \P_{i \eps}
$$
which is supported on continuous trajectories  $\omega $ starting from zero, such that $\omega (t) \in \H$ for $t \in (0, \zeta)$ where $\zeta = \zeta(\omega)$ is the lifetime of the excursion, and such that $\omega(t) = \omega(\zeta) \in \R$ for any $t \ge \zeta$. A ``sample'' from $\mathcal{N}$ will later be called a half plane excursion. More generally, the corresponding excursion measure can be defined on any simply connected domain $D$ from an analytic boundary point $z \in \partial D$ (meaning that there is a conformal map $f:D\to \H$ mapping $z$ to $0$ that extends analytically to a neighbourhood of $z$ on $\partial D$) and we then denote it by $\mathcal{N}_{z, D}$. These measures are conformally covariant, in the sense that for a conformal map $f: D \to \H$ as above, the image of $\mathcal{N}_{z, D}$ under $f$ is given by $|f'(z)| \mathcal{N}_{0, \H }$ \cite[p126]{Lawler-book}.

Note that even though $\mathcal{N}$ has infinite mass we can easily make sense of conditional laws $\mathcal{N}(\cdot | E)$ when $\mathcal{N}(E)\, \in(0,\infty)$, thus resulting in probability measures. We record the following lemma.
\begin{lemma}\label{L:exc}
The total mass of half plane excursions reaching $\partial (r \D) \cap \H$ is $4/(\pi r)$. In fact, the mass of excursions leaving $r\D \cap \H$ through the arc $(re^{ia}, re^{ib})$ is precisely
$$
\frac2{\pi r} \int_a^b \sin(\theta) d\theta
$$
for any $0 \le a \le b \le \pi$.
\end{lemma}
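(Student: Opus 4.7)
The plan is to go back to the defining vague limit $\mathcal{N} = \lim_{\varepsilon \to 0}\varepsilon^{-1}\P_{i\varepsilon}$ and reduce to a Poisson-kernel computation in $\H$ via a Joukowski-type conformal map. Write $E_{a,b}$ for the event ``the trajectory first leaves $r\D \cap \H$ through the arc $\{re^{i\theta}: \theta \in (a,b)\}$''. Since $E_{a,b}$ is a measurable event depending only on the trajectory up to its first exit from $r\D \cap \H$, a standard approximation of its indicator by bounded continuous functionals of the stopped path should give
\[
\mathcal{N}(E_{a,b}) = \lim_{\varepsilon \to 0^+}\varepsilon^{-1}\P_{i\varepsilon}(E_{a,b}).
\]

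To compute $\P_{i\varepsilon}(E_{a,b})$ I will use the Joukowski-type map $g(z) = -\tfrac{1}{2}(z/r + r/z)$, which is a conformal bijection from $r\D \cap \H$ onto $\H$ sending $re^{i\theta} \mapsto -\cos\theta$ (so the arc maps to the bounded real interval $(-\cos a, -\cos b)$) and sending $i\varepsilon$ to $g(i\varepsilon) = i(r^2-\varepsilon^2)/(2r\varepsilon)$, which tends to $+i\infty$ as $\varepsilon \to 0$. By conformal invariance of planar Brownian motion, $\P_{i\varepsilon}(E_{a,b})$ then equals the harmonic measure from $g(i\varepsilon)$ of $(-\cos a, -\cos b)$ in $\H$, i.e.\ an explicit integral of the Poisson kernel of $\H$.

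From here the computation is routine: with $y_\varepsilon := \mathrm{Im}(g(i\varepsilon)) \sim r/(2\varepsilon)$, the Poisson kernel of $\H$ at $iy_\varepsilon$ is asymptotically $1/(\pi y_\varepsilon)$ uniformly on any bounded subset of $\R$, so the harmonic measure is $(\cos a - \cos b)/(\pi y_\varepsilon)\,(1+o(1)) \sim 2\varepsilon(\cos a - \cos b)/(\pi r)$. Dividing by $\varepsilon$ and letting $\varepsilon \to 0$ gives
\[
\mathcal{N}(E_{a,b}) = \frac{2(\cos a - \cos b)}{\pi r} = \frac{2}{\pi r}\int_a^b \sin(\theta)\, d\theta,
\]
which is the second (stronger) statement; the total mass $4/(\pi r)$ is the special case $a=0$, $b=\pi$.

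I expect the main subtlety to lie in the very first step: checking that the vague limit defining $\mathcal{N}$ actually evaluates on the indicator of $E_{a,b}$. This relies on $E_{a,b}$ being ``local'' in the sense that it is determined by the trajectory up to a stopping time $\tau$ (the first exit of $r\D \cap \H$) that is $\P_{i\varepsilon}$--a.s.\ finite, with $\partial(r\D)\cap\H$ smooth; once the indicator is approximated by bounded continuous functionals of the path up to $\tau$ and the contribution of near-endpoint angles is controlled using the Poisson-kernel estimate above, the vague limit passes through. All remaining computations are direct, and the closed-endpoint cases $a=0$ or $b=\pi$ follow by monotone convergence in the endpoints.
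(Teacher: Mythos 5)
Your proposal is correct, and the computations check out: $g(z)=-\tfrac12(z/r+r/z)$ is indeed a conformal bijection from $r\D\cap\H$ onto $\H$ sending the arc to $(-\cos a,-\cos b)$ and $i\eps$ to $i(r^2-\eps^2)/(2r\eps)$, and the Poisson-kernel asymptotics give $\lim_{\eps\to0}\eps^{-1}\P_{i\eps}(E_{a,b})=\tfrac{2}{\pi r}(\cos a-\cos b)=\tfrac{2}{\pi r}\int_a^b\sin\theta\,d\theta$, with $4/(\pi r)$ for $a=0$, $b=\pi$. Your route is, however, genuinely different from the paper's. You compute directly from the defining limit $\mathcal N=\lim_{\eps\to0}\eps^{-1}\P_{i\eps}$, reducing everything to an exact harmonic-measure computation in the half-disc via the interior Joukowski map. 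The paper instead works from the point at infinity: it first identifies the exit law of $\mathcal N_{\infty,\H}$ on $\R$ as $\pi^{-1}$ times Lebesgue measure (via the Cauchy hitting density), then applies the exterior Joukowski map $f(z)=z+r^2/z:\H\setminus r\D\to\H$, with $|f'(\infty)|=1$ and $\partial_\theta f(re^{i\theta})=-2r\sin\theta$, to read off the mass and hitting density of the semicircle for excursions from $\infty$, and finally transfers to excursions from $0$ by the inversion $z\mapsto-1/z$ together with conformal covariance of excursion measures. Your argument is more elementary and self-contained (only conformal invariance of harmonic measure and the Poisson kernel of $\H$ are needed), at the price of the step you rightly flag: evaluating the vague limit on the exit event $E_{a,b}$. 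This is not a genuine gap --- it is at the same level of rigour as the paper's own manipulations (e.g.\ the remark after Lemma~\ref{lem:harmonic_sines} writes $\mathcal N_{0,\H}(\varphi(X_{\tau\wedge\zeta}))=\lim_{\eps\to0}\eps^{-1}\E_{i\eps}[\varphi(B_{\tau\wedge\zeta})]$ ``by definition'' of $\mathcal N_{0,\H}$), and your exact Poisson-kernel formula makes the limit clean since $\eps^{-1}\P_{i\eps}(E_{a,b})$ is computed explicitly for each $\eps$. What the paper's route buys is the clean description of $\mathcal N_{\infty,\H}$ and the covariance of excursion measures under M\"obius maps, which dovetails with the Markov property of excursions recorded in \cref{R:exc} and used afterwards; your route avoids these inputs entirely.
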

\begin{proof}
Note that when $D = \H$ and $z = \infty$, the measure $\mathcal{N}_{\infty, \H} ( X(\zeta_\H) \in [a, b]) =  (b-a)/\pi$ on $\R$, is nothing but Lebesgue measure (here $\zeta_D$ denotes the first time that the excursion $X$ leaves the domain $D$, i.e., its lifetime). This is easy to check, as starting from a point $ir$ (with $r>0$) the hitting distribution of $\R$ by a Brownian motion has the Cauchy distribution scaled by $r$, which tends to $\pi^{-1}$ times Lebesgue measure on $\R$ as $r \to \infty$.

For $r >0$, consider the conformal maps
$$f(z) = z + \frac{r^2}{z} = r( \frac{r}{z} + \frac{z}{ r}),$$
that map $\H \setminus (r\D)$ to $\H$ and satisfy $f(\infty) = \infty$ with $|f'(\infty) | =1$. Note that $f(r e^{i \theta}) = 2 r \cos (\theta)$. In particular $f$ sends the semicircle of radius $r$ to the interval $[-2r, 2r]$, of length $4r$. Hence if $\tau_r$ is the first hitting time of this circle, we have
$$
\mathcal{N}_{\infty, \H} ( \tau_{r} < \zeta) = 4r/\pi.
$$
The first claim of the lemma follows from this after applying the inversion map $z \mapsto -1/z$ (which sends $\infty$ to 0, leaves $\H$ invariant, and transforms $r\D$ into $(1/r) \D$). The second claim follows easily after noting that the derivative in $\theta$ of $f(re^{i \theta})$ is $-2 r\sin (\theta)$.
\end{proof}
\begin{remark}\label{R:exc}
For later reference, it may be useful to note that half plane excursions enjoy the following Markov property: conditionally upon hitting the circle of radius $r$, the law of an excursion after this time is simply that of Brownian motion killed upon leaving $\H$.\end{remark}

Combined with the domain Markov property and scale invariance of our fields, the upshot is that when we ``integrate $h^\H$ against $\mathfrak{h}$ on the semi-circle of radius $1/\sqrt{u}$ around $0$'' - equivalently ``test $h^\H$ against $p_u$'' - and view this as a process in $u$, it will satisfy both Brownian scaling and a certain Markovian property (note that $u = 0$ corresponds to testing $h^\H$ near the point at $\infty$). As a consequence, we may deduce that the process is Brownian motion -- see \cref{sec:char_BM}.
However, the reader may recall from the introduction that we really want \emph{circle averages}, say for $h^\D$, to be Brownian motions. Since these processes are easily shown to have independent and stationary increments, this would be immediate if we knew that \emph{they} satisfied Brownian scaling. Unfortunately, this seems very hard to deduce directly from \cref{ass:ci_dmp}. So, we introduce the measures $p_u$ (and associated sine-averages for $h^\H$, see below) instead, and will later relate them to circle averages in \cref{sec:circ_avg_gaussian}. We remark that alternative measures to $p_u$, for example correctly defined variants in cones, could play the same role. The current set-up has been chosen as it seems to be the neatest.

Now, in order to make sense of ``testing $h^\H$ against $p_u$'' we need to first approximate $p_u$ by some smooth test functions.
For $\delta\in (0,\pi/2)$ we let $p_u^\delta$ be defined in the same way as $p_u$, but replacing $\sin(\theta)$ in the integral above with $\sin(\theta) \chi^\delta(\theta)$, where $\chi^\delta:[0,\pi]\to [0,1]$ is smooth, equal to $1$ in $[\delta, \pi-\delta]$, and equal to $0$ in $[0,\delta/2]\cup [\pi-\delta/2,\pi]$.
Finally, for $\eta:[0,1]\to [0,1]$ a smooth bump function with $\int_{0}^{1}\eta(y) \, dy=1$, we define $\eta^{\delta}(\cdot):=\frac{1}{\delta}\eta(\frac{\cdot}{\delta})$ and denote by $p_u^{\delta,in},p_u^{\delta,out}$ the measures that integrate against $\phi\in C_c(\C)$ as
\[ (\phi,p_u^{\delta,in\,}):= \int_0^\delta (\phi,p^\delta_{u(1+x)})\, \eta^{\delta}(x) \, dx \;\; ; \;\; (p_u^{\delta,out},\phi):=\int_0^\delta (\phi,p^\delta_{u(1-x)})\, \eta^{\delta}(x) \, dx.\]
Thus $p_u^{\delta,in},p_u^{\delta,out}$ are smooth ``fattenings'' of the measure $p_u$ to the inside and outside of the arc $\partial (\frac1{\sqrt{u} } \D^+)$ respectively, that are also ``cut off'' away from the real line (so as to have compact support in $\H$). The reason for these definitions is the following:

\begin{remark}
	\label{rmk:fattening_smooth}
We have that for some $p_u^{\delta,in/out}\in C_c^\infty(\C)$ (note the abuse of notation $p_u^{\delta, in/out}$ for both measure and density here): $$(p_u^{\delta,in/out},\phi)=\int_{\C} p_u^{\delta,in/out}(z)\phi(z) \, dz.$$ We remark that it is possible to write down an explicit expression for $p_u^{\delta,in/out}(z)$, but we do not need it.
\end{remark}
The upshot is that we can define $$(h^D,p_u^{\delta,in/out})$$ (where $p_u^{\delta,in/out}$ refers to the smooth density) for any $D$ such that $\mathrm{Support}(p_u^{\delta,in/out})\Subset D$ (e.g., $D=\D^+$ or $D=\H$).

 \begin{lemma}\label{lem:harmonic_sines}
\begin{enumerate}[(a)]	\item Suppose that $u>0$ and $\varphi$ is a harmonic function in $\sou$, that can be extended continuously to a function on $\sou\cup(-\frac{1}{\sqrt{u}}, \frac{1}{\sqrt{u}})$
	 that is equal to zero on $(-\frac{1}{\sqrt{u}}, \frac{1}{\sqrt{u}})$. Then $(\varphi,p_r)_{r\in (u,\infty)}$ is constant.

	\item Suppose that $u>0$ and $\varphi$ is a harmonic function in $\H\setminus \overline{\sou}$ that can be extended continuously to $0$ on $(-\infty,- \frac{1}{\sqrt{u}})\cup (\frac{1}{\sqrt{u}},\infty)$. Then $(\varphi,p_s)_{s\in (0,u)}$ is a linear function of $s$.
	\item Suppose that $0<s<r<\infty$ and $\varphi$ is a harmonic function in $\sos\setminus \overline{\sor}$ that can be extended continuously to 0 on $(-\frac{1}{\sqrt{s}},-\frac{1}{\sqrt{r}})\cup (\frac{1}{\sqrt{r}},\frac{1}{\sqrt{s}})$. Then $(\varphi,p_u)_{u\in (s,r)}$ is a linear function of $u$.
\end{enumerate}
\end{lemma}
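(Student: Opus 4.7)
The three parts can be handled uniformly by introducing the function
$$F(\rho) := \int_0^\pi \sin\theta\, \varphi(\rho \e^{i\theta})\, d\theta,$$
defined on the range of $\rho$ for which the semi-circle of radius $\rho$ lies in the domain of $\varphi$. Observe that by the change of variable $\rho = 1/\sqrt{v}$ one has $(\varphi, p_v) = \sqrt{v}\, F(1/\sqrt{v})$, so in each case the statement reduces to identifying the form of $F$.

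My first step would be to show, uniformly across (a), (b), (c), that $F$ satisfies the Euler-type ODE $\rho^2 F''(\rho) + \rho F'(\rho) - F(\rho) = 0$ on its domain of definition. To derive this, I would multiply Laplace's equation in polar coordinates, $\rho^2 \partial_\rho^2 \varphi + \rho \partial_\rho \varphi + \partial_\theta^2 \varphi = 0$, by $\sin\theta$ and integrate over $\theta \in (0, \pi)$. The first two terms yield $\rho^2 F''(\rho) + \rho F'(\rho)$ after interchanging differentiation and integration. Two integrations by parts applied to the third term give
$$\int_0^\pi \sin\theta\, \partial_\theta^2 \varphi\, d\theta = \varphi(\rho \e^{i\pi}) + \varphi(\rho) - F(\rho),$$
the boundary contributions from $\sin\theta\, \partial_\theta\varphi$ vanishing because $\sin 0 = \sin \pi = 0$. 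In each of (a)--(c) the zero boundary assumption gives $\varphi(\pm \rho) = 0$ for every admissible $\rho$ (the intervals specified in the three parts are precisely those $\pm \rho$ for which the semi-circle of radius $\rho$ lies in the corresponding domain), so the remaining boundary terms drop out and the ODE follows. Its general solution is $F(\rho) = A\rho + B/\rho$ for constants $A, B$.

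This settles parts (b) and (c) at once: substituting $\rho = 1/\sqrt{v}$ gives $(\varphi, p_v) = \sqrt{v}(A/\sqrt{v} + B\sqrt{v}) = A + Bv$, a linear function of $v$ on the relevant interval. For part (a), there is the additional constraint that $\varphi$ is harmonic and hence continuous at the origin, where the boundary condition forces $\varphi(0) = 0$; hence $F(\rho) \to 0$ as $\rho \downarrow 0$, so $B$ must vanish, and we obtain $(\varphi, p_r) = A$ constant on $(u, \infty)$.

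The only minor technicalities concern justifying the interchange of differentiation and integration, and the well-definedness of $\partial_\theta \varphi$ at $\theta = 0, \pi$. These are routine because $\varphi$ is $C^\infty$ in the interior of its domain of harmonicity, and, via Schwarz reflection across the real-axis segments where $\varphi$ extends continuously to zero, is in fact smooth up to those segments. I do not anticipate any serious obstacle: the substance of the argument is entirely the short ODE derivation above.
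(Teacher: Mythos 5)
Your argument is correct and, for parts (b) and (c), is essentially the paper's own deterministic proof in a slightly different dress: the paper differentiates $f(u)=\sqrt{u}\int_0^\pi\sin(\theta)\varphi(1/\sqrt{u},\theta)\,d\theta$ twice in $u$, uses harmonicity to trade radial for angular derivatives, and integrates by parts twice in $\theta$ to get $f''\equiv 0$, with exactly the same Schwarz-reflection justification of regularity at $\pm\rho$ and the same vanishing of boundary terms via $\sin 0=\sin\pi=0$ and $\varphi(\pm\rho)=0$; deriving the Euler equation $\rho^2F''+\rho F'-F=0$ and solving it as $A\rho+B/\rho$ is an equivalent reorganisation of that computation. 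The one genuine difference is part (a): the paper's written proof only covers (b) and (c) (linearity), and disposes of (a) probabilistically in the preceding remark, via the Markov property of It\^o excursions at the hitting time of the smaller semicircle. You instead conclude (a) deterministically, observing that continuity of $\varphi$ at the origin with boundary value $0$ forces $F(\rho)\to 0$ as $\rho\downarrow 0$, which kills the $B/\rho$ mode and turns linearity of $r\mapsto(\varphi,p_r)$ into constancy. This gives a uniform, purely analytic treatment of all three parts, at the cost of losing the excursion-theoretic interpretation (in particular the identification of the constant as a multiple of the inward normal derivative of $\varphi$ at $0$) that the paper's remark provides; both routes are sound.
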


\begin{remark}
We observe that (a) is easily seen from the perspective of It\^{o} excursions. By \cref{L:exc}, we can represent $(\varphi,p_r)$ for any $r>u$ by $\frac{\pi}{2}\mathcal{N}_{0,\H}(\varphi(X_{\tau_{(1/\sqrt{r})}\wedge \zeta}))$
 where $\tau_{(1/\sqrt{r})}$ is the first hitting time of the semicircle of radius $(1/\sqrt{r})$ centred at $0$. For $s\ge r$, since $\varphi$ is assumed to be 0 on $(-1/\sqrt{u},1/\sqrt{u})$, we can apply the Markov property, \cref{R:exc}, of the excursion $X$  at $\tau_{(1/\sqrt{s})}\wedge \zeta$. This gives $(\varphi,p_r)=\sqrt{s} \int_0^\pi \sin(\theta) \E_{\frac{e^{i\theta}}{\sqrt{s}}}[\varphi(B_{\tau_{\partial\sor}})] \, d\theta$ for $B$ a complex Brownian motion. By harmonicity of $\varphi$, this quantity is equal to  $(\varphi,p_s)$ as required.

Actually, it can be seen from the argument above that the constant value of $(\varphi,p_r)$ for $r>u$, is equal to $\pi/2$ times the normal derivative, directed \emph{into} $\H$, of $\varphi$ at the origin. Indeed, we saw that for any such $r$, $$(\varphi,p_r)=\frac{\pi}{2}\mathcal{N}_{0,\H}(\varphi(X_{\tau_{(1/\sqrt{r})}\wedge \zeta}))=\frac{\pi}{2}\lim_{\eps\to 0}\eps^{-1}\mathbb{E}_{i\eps}(\varphi(B_{\tau_{(1/\sqrt{r})}\wedge \zeta}))=\frac{\pi}{2}\lim_{\eps\to 0} \eps^{-1}\varphi(i\eps),$$
 where the second equality is by definition of $\mathcal{N}_{0,\H}$ and the third is by harmonicity of $\varphi$.
\end{remark}

{Since it is simpler for (b) and (c), the full proof of \cref{lem:harmonic_sines} below is of a more deterministic nature.}
\begin{proof}
 	
	Write $\varphi(r\e^{i\theta})=\varphi(r,\theta)$ and  $f(u)=(\varphi,p_u)=\sqrt{u} \int_0^\pi \sin(\theta)\varphi(1/\sqrt{u},\theta) \, d\theta$. We will show that $f''\equiv 0$ on $(s,r)$, which implies (c). This in turn implies (b), by taking $s$ to $0$.
	
Take any $u\in (s,r)$. Let us first remark, in order to justify differentiation under the integral and integration by parts in what follows, that $\varphi$ is in fact very regular in open neighbourhoods of $\pm (1/\sqrt{u})$ inside $\sos\setminus \overline{\sor}$. Indeed since $\varphi$ extends continuously to $0$ on neighbourhoods of $\pm (1/\sqrt{u})$ in $\R$, it can be extended by Schwarz reflection to a harmonic function in open balls $B_{\pm 1/\sqrt{u}}(\eps)\subset \C$ for some $\eps$. See, for example, \cite[\S 7.5.2]{krantz}.
In particular $\frac{\partial \varphi }{\partial \theta} $ remains bounded in neighbourhoods of $\pm 1/\sqrt{u}$.
Now we compute
\begin{eqnarray*} \frac{d^2}{du^2}(\sqrt{u} \varphi(1/\sqrt{u},\theta)) & = & \frac{1}{4u^{5/2}}\left( \frac{\partial^2}{\partial r^2}\varphi(1/\sqrt{u},\theta)+ \sqrt{u}\frac{\partial}{\partial r}\varphi(1/\sqrt{u},\theta)-u\varphi(1/\sqrt{u},\theta)  \right) \\ & = & -\frac{1}{4u^{3/2}} \left(\frac{\partial^2}{\partial \theta^2}\varphi(1/\sqrt{u},\theta)+\varphi(1/\sqrt{u},\theta)\right)
	 ,\end{eqnarray*}
 using harmonicity of $\varphi$ for the final identity. Differentiating under the integral in the expression for $f(u)$, and apply integration by parts twice with respect to $\theta$, we see that $f''(u)=0$.
\end{proof}

\begin{prop}\label{prop:def_sine_avg}
	Let $h^\H$ be a sample from $\Gamma^\H$. Then for any $u\in (0,\infty)$ the limits
	\begin{equation} \label{eq:hpu_def} \lim_{\delta\downarrow 0}(h^\H,p_u^{\delta,in}) \text{ and }
	\lim_{\delta\downarrow 0} (h^\H,p_u^{\delta,out}) \end{equation}
	exist in probability and in $L^1$,
	and are equal a.s.
	We define this limiting quantity to be the $(1/\sqrt{u})$-\textbf{sine average} of $h^\H$, and denote it (with a slight abuse of notation) by $(h^\H, p_u)$. Recall the notation $h^\H=h_\H^D+\varphi_\H^D$ for the domain Markov decomposition of $h^\H$ in $D\subset \H$.  We also have that with probability one:
	\begin{equation}\label{eq:hpu_alt_def} (h^\H,p_u)=(\varphi_{\H}^{\sou},p_r)  \text{ for all }  r> u  \, \text{ \bfseries{and} } \,  (h^\H,p_u)=\frac{u}{s}(\varphi_{\H}^{\H\setminus \overline{\sou}},p_s) \text{ for all } s< u. \end{equation}
\end{prop}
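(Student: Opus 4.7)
The plan is to analyse the ``in'' and ``out'' approximating sequences using two different domain Markov decompositions of $h^\H$, identify each limit with the appropriate expression in \eqref{eq:hpu_alt_def}, and then verify that the two limits coincide almost surely.

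For the ``in'' limit, I would decompose $h^\H = h_\H^{\sou} + \varphi_\H^{\sou}$ via the DMP in $\sou$. The $\Gamma^{\sou}$-part is handled by invoking the Dirichlet boundary condition of \cref{ass:ci_dmp}(ii): the support of $p_u^{\delta,\mathrm{in}}$ obviously approaches the semicircular part of $\partial\sou$, and one verifies $\|p_u^{\delta,\mathrm{in}}\|_{H^{-1}(\sou)} \to 0$ by direct Green-function estimates, using the decay $G_{\sou}(x,y) \lesssim d(x,\partial\sou)\,d(y,\partial\sou)/|x-y|^2$ away from the diagonal together with the mass $p_u^{\delta,\mathrm{in}} \sim 1/\delta$ concentrated on a strip of area $O(\delta)$ adjacent to the boundary. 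For the harmonic part, one first argues that $\varphi_\H^{\sou}$ extends a.s.\ continuously to zero on the real diameter (inherited from the Dirichlet boundary conditions of $h^\H$ and $h_\H^{\sou}$); then \cref{lem:harmonic_sines}(a) asserts that $(\varphi_\H^{\sou}, p_r)$ is a.s.\ a single constant $X_{\mathrm{in}}$ for all $r > u$. Expressing $(\varphi_\H^{\sou}, p_u^{\delta,\mathrm{in}})$ as a convex combination of such sine-averages (with the Schwarz reflection argument in the proof of \cref{lem:harmonic_sines} justifying smoothness of $\varphi_\H^{\sou}$ near the diameter) and passing $\delta \to 0$ gives $(h^\H, p_u^{\delta,\mathrm{in}}) \to X_{\mathrm{in}}$, which establishes the first identity in \eqref{eq:hpu_alt_def}.

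The ``out'' limit is treated analogously using the DMP in $D_{\mathrm{out}} := \H\setminus\overline{\sou}$: the $\Gamma^{D_{\mathrm{out}}}$-part vanishes by the same recipe, and \cref{lem:harmonic_sines}(b) gives $(\varphi_\H^{D_{\mathrm{out}}}, p_s) = A + Bs$ on $s \in (0,u)$ for random constants $A, B$. To force $A = 0$, I would use \cref{lem:moment_bound} to bound $\E|\varphi_\H^{D_{\mathrm{out}}}(z)| = O(1)$ uniformly for $z$ on a semicircle of radius $1/\sqrt{s}$ with $s$ small; since $p_s$ has total mass $2\sqrt{s}$ this gives $\E|A + Bs| = O(\sqrt{s})$, and Fatou's lemma as $s \to 0$ forces $A = 0$ a.s. Hence $(u/s)(\varphi_\H^{D_{\mathrm{out}}}, p_s) = uB =: X_{\mathrm{out}}$ for all $s \in (0,u)$, giving the second identity in \eqref{eq:hpu_alt_def}.

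The main obstacle is showing $X_{\mathrm{in}} = X_{\mathrm{out}}$ almost surely. Applying \cref{lem:harm_ci} to the M\"obius self-map $f(z) = -1/(uz)$ of $\H$ (which swaps $\sou$ with $D_{\mathrm{out}}$ and fixes the semicircle of radius $1/\sqrt{u}$ setwise) gives $\varphi_\H^{\sou} \overset{(d)}{=} \varphi_\H^{D_{\mathrm{out}}}\circ f$; combined with the explicit transformation $f_\ast p_r = (r/u)\,p_{u^2/r}$, this yields the distributional identity $X_{\mathrm{in}} \overset{(d)}{=} X_{\mathrm{out}}$. To upgrade to almost-sure equality the plan is to show $(h^\H, p_u^{\delta,\mathrm{in}} - p_u^{\delta,\mathrm{out}}) \to 0$ in probability: decomposing via the DMP in a thin annular domain $D_\epsilon := \mathsf D_{u-\epsilon}\setminus\overline{\mathsf D_{u+\epsilon}}$ straddling the semicircle, \cref{lem:harmonic_sines}(c) shows the harmonic contribution of $\varphi_\H^{D_\epsilon}$ cancels (both averages converge to the same value of a linear function at $t=u$), leaving only the residual $\Gamma^{D_\epsilon}$-part. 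Controlling this residual uniformly as one subsequently sends $\epsilon \to 0$, via a scaling argument exploiting the thinness of $D_\epsilon$ together with the moment bound of \cref{lem:moment_bound}, is the delicate final step.
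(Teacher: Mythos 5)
Your treatment of the two limits separately is essentially the paper's: the same decompositions $h^\H=h_\H^{\sou}+\varphi_\H^{\sou}$ and $h^\H=h_\H^{\H\setminus\overline{\sou}}+\varphi_\H^{\H\setminus\overline{\sou}}$, \cref{lem:harm_0_boundary} plus \cref{lem:harmonic_sines}(a)/(b) for the harmonic parts, the Dirichlet boundary condition for the field parts, and \cref{lem:moment_bound} with the total mass $2\sqrt{s}$ of $p_s$ to kill the constant term in the ``out'' case. The problem is the step you yourself flag as delicate: almost-sure equality of the two limits. The M\"obius argument only produces equality \emph{in law}, which you concede buys nothing, so everything rests on showing $(h^\H,p_u^{\delta,\mathrm{in}}-p_u^{\delta,\mathrm{out}})\to 0$ in probability via the thin half-annulus $D_\epsilon=\mathsf D_{u-\epsilon}\setminus\overline{\mathsf D_{u+\epsilon}}$. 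There the argument has a genuine gap. For fixed $\epsilon$, the residual $(h_\H^{D_\epsilon},p_u^{\delta,\mathrm{in}}-p_u^{\delta,\mathrm{out}})$ does \emph{not} tend to $0$ as $\delta\to 0$: the supports of $p_u^{\delta,\mathrm{in/out}}$ concentrate on the central semicircle of radius $1/\sqrt{u}$, which lies in the interior of $D_\epsilon$, so \cref{ass:ci_dmp}(ii) is inapplicable and the limit is essentially the (in minus out) sine-average of a field with law $\Gamma^{D_\epsilon}$ -- a quantity of exactly the type the proposition is trying to control, so the reduction is circular unless you can show it is small. For the subsequent $\epsilon\to 0$ limit, the tools you cite cannot deliver smallness: \cref{lem:moment_bound} bounds first moments of harmonic parts at points by $C(\log(\cdot)\vee 1)\ge C$, i.e.\ it never produces a bound tending to $0$, and scaling alone gives no quantitative decay for a field in a thin annulus when only $\xi$-th moments with $\xi$ slightly above $1$ (and no Gaussianity, no second moments) are available.

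The paper circumvents precisely this difficulty by never needing to control a field in a thin annulus. It proves the chain \eqref{eqn:letters}: the ``in'' limit $c=(\varphi_\H^{\sou},p_r)$ is first identified with $\lim_{\delta\downarrow 0}(\varphi_\H^{\mathsf D_{u-\delta}},p_u)$ (using \cref{lem:harmonic_sines}, \cref{lem:nested_dmp}, scaling and the Dirichlet boundary condition), then with $\lim_{\delta\downarrow 0}(\varphi_\H^{\mathsf D_{u-\delta}},p_u^{\sqrt{u/(u-\delta)}-1,\mathrm{out}})$, where the crucial point is that the out-fattening width is \emph{coupled} to the enlarged semidisc $\mathsf D_{u-\delta}$ so that these out-test functions are supported near $\partial\mathsf D_{u-\delta}$; consequently $(h_\H^{\mathsf D_{u-\delta}},p_u^{\sqrt{u/(u-\delta)}-1,\mathrm{out}})\to 0$ by the Dirichlet boundary condition, which identifies $c$ with the ``out'' limit of $h^\H$ itself. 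If you want to salvage your route, you would need to replace the appeal to \cref{lem:moment_bound} by an argument of this coupled type (or some other genuinely quantitative estimate valid under $(1+\eps)$ moments); as written, the final step does not go through.
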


\begin{remark}\label{rmk:hpu_process}
This directly implies that for any finite collection $u_1,\cdots, u_n\in (0,\infty)$, the limits in \eqref{eq:hpu_def} hold jointly in probability, and \eqref{eq:hpu_alt_def} holds jointly almost surely.
 In particular, this  defines a consistent family of  finite dimensional marginals, from which we may define the stochastic process $$(h^\H,p_u)_{u\in (0,\infty)}.$$
\end{remark}

Before we begin the proof of \cref{prop:def_sine_avg}, we need the following lemma. It says (albeit in a more specific setting) that if we apply the domain Markov property to our field in a subdomain that shares a section of boundary with the original domain, then the harmonic function can be extended continuously to $0$ on the common section of boundary. This should seem very intuitive, but the proof is a little trickier than one might guess (see for example Fatou's theorem \cite{Fatou} for the kind of conditions that guarantee existence of non-tangential limits for harmonic functions at the boundary).

\begin{lemma}\label{lem:harm_0_boundary}
Suppose that $h^\H=h_\H^{\D^+}+\varphi_{\H}^{\D^+}$ is the domain Markov decomposition of $h^\H$ in $\D^+$. Then $\varphi_{\H}^{\D^+}$ can almost surely be extended continuously to $0$ on $(-1,1)$.
\end{lemma}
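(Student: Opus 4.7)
The plan is to exploit the Dirichlet boundary condition (applied to both $h^\H$ and $h_\H^{\D^+}$), the harmonicity of $\varphi := \varphi_\H^{\D^+}$ in $\D^+$, and the a priori moment bound from \cref{lem:moment_bound}.

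First I would produce integrated smallness of $\varphi$ near $(-1,1)$: given $[a,b] \subset (-1,1)$ compact and $\eps > 0$, I construct a smooth non-negative probability density $\phi_\eps \in C_c^\infty(\D^+)$ supported in the horizontal strip $[a,b] \times [\eps/2, 3\eps/2]$. A direct computation with the half-plane Green's function shows $\|\phi_\eps\|_{H^{-1}(\H)}^2 = O(\eps/(b-a)) \to 0$ as $\eps \to 0$, and since $G_{\D^+} \le G_\H$ by the maximum principle, $\|\phi_\eps\|_{H^{-1}(\D^+)}^2$ satisfies the same bound. The support distances to $\partial \H$ and $\partial \D^+$ are both $O(\eps)$. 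Applying \cref{ass:ci_dmp}(ii) to $h^\H$ and, via conformal invariance (as noted after the assumptions), to $h_\H^{\D^+}$ (which has law $\Gamma^{\D^+}$), I get $(h^\H, \phi_\eps), (h_\H^{\D^+}, \phi_\eps) \to 0$ in $L^1$, hence $(\varphi, \phi_\eps) \to 0$ in $L^1$.

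Next I would upgrade integrated smallness to pointwise convergence via nested DMP. Fixing $z_0 \in (-1,1)$ and writing $D_k := B(z_0, 2^{-k}) \cap \H$, iterated \cref{lem:nested_dmp} gives the independent decomposition
\[
\varphi_\H^{D_K}(z_0 + iy) \;=\; \varphi(z_0 + iy) \;+\; \sum_{k=0}^{K-1} \varphi_{D_k}^{D_{k+1}}(z_0 + iy)
\]
for $y < 2^{-K}$. Conformal (scale) invariance identifies the distribution of each summand with that of a fixed-domain random variable, and \cref{lem:moment_bound} bounds each summand in $L^p$ ($p \le \xi \wedge 2$). Since $\varphi(z_0 + iy)$ is itself bounded in $L^p$ (by \cref{lem:moment_bound}), while the sum has $K \to \infty$ independent centered terms, combining the self-similar structure with the integrated smallness from the first step forces $\varphi(z_0 + iy) \to 0$ in probability (and in $L^r$ for some $r > 0$) as $y \to 0$.

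Finally, to promote pointwise convergence in probability to an almost sure continuous extension, I would apply a Borel--Cantelli argument along dyadic heights $y_n = 2^{-n}$ for a countable dense set of boundary points $z_0 \in (-1,1)$, combined with harmonicity-based Cauchy gradient estimates that control the oscillation of $\varphi$ between nearby points in $\D^+$. The main technical obstacle is the self-similarity analysis in the second step: carefully leveraging the interplay between the moment bound and the independent summands in the nested DMP decomposition to transfer $L^1$ integrated smallness into quantitative pointwise decay.
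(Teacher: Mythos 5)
Your step 1 is fine as far as it goes, but notice what it actually yields: $\E\bigl[\,\bigl|(\varphi,\phi_\eps)\bigr|\,\bigr]\to 0$, i.e.\ smallness of a \emph{signed} average of $\varphi=\varphi_\H^{\D^+}$ against a spread-out test function. This is strictly weaker than control of $\int_J|\varphi(y+i\delta)|\,dy$, and the difference matters: a signed average tending to zero is compatible with $\varphi$ oscillating with large absolute values near $(-1,1)$, so it cannot by itself feed any pointwise or uniform estimate. The real gap is your step 2, which is asserted rather than argued. The nested decomposition $\varphi_\H^{D_K}=\varphi+\varphi_{\D^+}^{D_0}+\sum_k\varphi_{D_k}^{D_{k+1}}$ (you also dropped the $\varphi_{\D^+}^{D_0}$ term) writes $\varphi(z_0+iy)$ as \emph{one summand} of a larger independent sum; together with \cref{lem:vbe}(i) and \cref{lem:moment_bound} this only gives upper bounds on moments of the sum, and since $d(z_0+iy,\partial\H)\asymp d(z_0+iy,\partial\D^+)\asymp y$, \cref{lem:moment_bound} gives a bound on $\E|\varphi(z_0+iy)|^p$ that is merely uniform in $y$ — nothing in this structure ``forces'' $\varphi(z_0+iy)\to 0$. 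The mechanism that actually produces pointwise decay in the paper is different: by \cref{lem:harm_ci}, $\varphi(i\delta)\overset{(d)}{=}(\varphi_\H^{(1/\delta)\D^+},\psi)$ for a fixed bump $\psi$ at $i$, and the independent decomposition $(h^\H,\psi)\overset{(d)}{=}(h^{(1/\delta)\D^+},\psi)+(\varphi_\H^{(1/\delta)\D^+},\psi)$ together with $(h^{(1/\delta)\D^+},\psi)\to(h^\H,\psi)$ in law (conformal invariance) forces the harmonic term to vanish in probability (characteristic functions); uniform integrability from \cref{lem:moment_bound} then upgrades this to $L^1$.

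Your step 3 is also shakier than it looks. Convergence in probability at a countable dense set of boundary points, plus interior gradient estimates, does not readily give an a.s.\ continuous extension: the gradient bound for a harmonic function degrades like $\Im(z)^{-1}$ times a local sup, so the mesh of boundary points would have to refine like $o(\Im z)$, which in turn requires quantitative decay rates with summable error probabilities — exactly what you have not established. The paper sidesteps this entirely: once $\E|\varphi(y+i\delta)|\to 0$ is known pointwise, Fubini gives $\int_J|\varphi(y+i\delta_k)|\,dy\to 0$ a.s.\ along a subsequence, and the harmonic representation of $\varphi(z)$ via the exit distribution of Brownian motion from the region between the line $\{\Im z=\delta_k\}$ and the semicircle $S_J$ (with the exit density on the segment dominated by a Cauchy kernel, hence by $C\,\Im(z)^{-1}$) yields, on a single a.s.\ event, the simultaneous bound $|\varphi(z)|\le M\,P(z)+C\,\Im(z)^{-1}a_k$ for all $z$ near $I$, where $M=\int_{S_J}|\varphi|<\infty$ a.s.\ and $a_k\to 0$ a.s. Letting $k\to\infty$ gives $|\varphi(z)|\le M\,P(z)$ for \emph{all} such $z$ at once, and $P(z)\to 0$ deterministically as $z\to I$; this is what delivers the continuous extension by $0$ without any Borel--Cantelli over a boundary net. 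To repair your proposal you would need (a) a genuine argument for pointwise decay (the independence/characteristic-function step above, or an absolute-value version of your integrated smallness), and (b) a uniformisation device of this harmonic-measure type rather than a dense-set-plus-gradient argument.
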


\begin{proof}
	 We first show that for any $y\in (-1,1)$:
\begin{equation}\label{eq:bc_harm}
\varphi_{\H}^{\D^+}(y+i\delta)\to 0 \text { in distribution (so also in probability) as } \delta\to 0.
\end{equation}

Without loss of generality, the other cases being very similar, let us assume that $y=0$. Observe that by \cref{lem:harm_ci}  and harmonicity we have that $$\varphi_{\H}^{\D^+}(i\delta)\overset{(d)}{=}\varphi_{\H}^{(1/\delta)\D^+}(i)=(\varphi_{\H}^{(1/\delta)\D^+},\psi),$$ where $\psi\in C_c^\infty(\C)$ is non-negative with $\int_{\C} \psi=1$, supported in $B_i(1/2)$ and rotationally symmetric about $i$. Moreover, by definition of the domain Markov decomposition and conformal invariance, we have that $$(h^\H, \psi)\overset{(d)}{=} (h^{(1/\delta)\D^+},\psi)+(\varphi_{\H}^{(1/\delta)\D^+},\psi) \text{ with } h^{(1/\delta)\D^+},\, \varphi_{\H}^{(1/\delta)\D^+} \text{ independent.} $$ On the other hand, it is easy to see by conformal invariance of $h$ that $(h^{(1/\delta)\D^+},\psi)$ converges in distribution to $(h^\H, \psi)$ as $\delta\to 0$. This implies  that $$(\varphi_{\H}^{(1/\delta)\D^+},\psi)\to 0$$ in distribution and probability as $\delta\to 0$, by standard arguments (for example, considering characteristic functions).

This completes the proof of \eqref{eq:bc_harm}.
We immediately observe that the sequence in \eqref{eq:bc_harm} is uniformly integrable by \cref{lem:moment_bound}, and so \eqref{eq:bc_harm} can be strengthened to say that \begin{equation}\label{eq:bc_harm_e}
\mathbb{E}[|\ph_\H^{\D^+}(y+i\delta)|]\to 0 \text { as } \delta\to 0
\end{equation}
With \eqref{eq:bc_harm_e} in hand, let us now take $I=[a,b]\subset (-1,1)$ arbitrary: we will show that $\varphi_{\H}^{\D^+}$ can almost surely be continuously extended to $0$ on $I$. We denote $\ph=\varphi_{\H}^{\D^+}$ from now on, and fix $J$ such that $I\subset J\subsetneq [-1,1]$.

First, observe that by dominated convergence and \cref{lem:moment_bound}, \eqref{eq:bc_harm_e} implies that $\mathbb{E}[\int_J |\ph(y+i\delta)| \, dy]\to 0$ as $\delta\to 0$ and hence that for some sequence $\delta_k\to 0$, $a_k:=\int_J |\ph(y+i\delta_k)| \, dy$ converges to $0$ almost surely. We also have by \cref{lem:moment_bound} that if $S_J$ is the semicircle centered on $J$, then $M:=\int_{S_J} |\varphi(z)| \, dz$ is almost surely finite. Finally, by harmonicity of $\varphi$, and by dominating the exit density from  $\H+i\delta$ for Brownian motion started from  $z$ with $\Im(z)\ge 2\delta$ by a Cauchy density, we know that there exists some constant $C$ (deterministic, depending on $I,J$) such for any $z\in \D^+$ that is sufficiently close to $I$, $|\varphi(z)| \le M P(z) +C \Im(z)^{-1} a_k$ for all $k$ large enough, where $P(z)$ is the probability that a Brownian motion started from $z$ hits $S_J$ before $J$. Taking $k\to 0$ gives that $|\varphi(z)|\le MP(z)$ a.s. for all such $z$, and so $\ph$ can almost surely be continuously extended to $0$ on $I$.

\end{proof}

Now we can use \cref{lem:harmonic_sines} to prove \cref{prop:def_sine_avg}.
\begin{proof}[Proof of \cref{prop:def_sine_avg}] Observe that for any $u >0$, $\varphi_\H^{\sou}$ can a.s.\ be extended continuously to $0$ on $(-1/\sqrt{u},1/\sqrt{u})$  by scaling and \cref{lem:harm_0_boundary}.
	Hence by Lemma \ref{lem:harmonic_sines}, on an event of probability one,  \begin{equation}
	\label{eq:vp_const}
(\varphi_\H^{\sou},p_r)=:c	\end{equation} is constant for all $r>u$. This implies (since $\eta^\delta$ has mass one and by definition of $p_u^{\delta, in}$) that with probability one, $$(\varphi_\H^{\sou},p_u^{\delta,in})-c=   \int_0^\delta \left(\varphi_\H^{\sou},p^\delta_{u(1+x)})-(\varphi_\H^{\sou},p_{u(1+x)})\right)\, \eta^{\delta}(x)  \, dx  $$
for all $\delta$ small enough. Noting by \cref{lem:moment_bound} that the right-hand side goes to $0$ in $L^1$ as $\delta\to 0$, we can deduce that
$$(\varphi_\H^{\sou},p_u^{\delta,in})\to c \text{ in probability and in } L^1$$
as $\delta\to 0$.

 Therefore, to show that the first limit in \eqref{eq:hpu_def} exists in probability and in $L^1$, and is equal to $c$ almost surely, we need only show that \[\lim_{\delta\downarrow 0}(h^\H-\varphi_\H^{\sou},p_u^{\delta,in})= \lim_{\delta\downarrow 0}(h_\H^{\sou},p_u^{\delta,in})= 0\] in probability and in $L^1$. However, this follows by applying the zero boundary condition assumption to the field $h_\H^{\sou}$.
	
An almost identical line of reasoning using part (b) of \cref{lem:harmonic_sines} implies that the second limit in \eqref{eq:hpu_def} exists a.s.\ and is equal to the constant value of the second expression in \eqref{eq:hpu_alt_def}. Observe that $$(\varphi_{\H}^{\H\setminus \overline{\sou}},p_s)\to 0$$ in probability and in $L^1$ as $s\to 0$ (for example, by bounding its first moment using \cref{lem:moment_bound}).

Thus all that remains is to show that the two limits in \eqref{eq:hpu_def} (or equivalently in \eqref{eq:hpu_alt_def}) coincide a.s.\ For this, we will prove that
 \begin{equation}\label{eqn:letters} c\overset{(a)}{=}\lim_{\delta \downarrow 0} (\varphi_{\H}^{\mathsf D_{u-\delta}},p_u)\overset{(b)}{=}\lim_{\delta\downarrow 0} (\varphi_{\H}^{\mathsf D_{u-\delta}},p_u^{\sqrt{\frac{u}{u-\delta}}-1, out})\overset{(c)}{=}\lim_{\delta\downarrow 0} (h^\H,p_u^{\sqrt{\frac{u}{u-\delta}}-1, out}),\end{equation} where all limits are in probability. From this we may conclude, since we already showed that the first limit in \eqref{eq:hpu_def} was a.s.\ equal to $c$, and the right hand side above is equal to the second limit in \eqref{eq:hpu_def} (which we also know exists in probability.)

We will now prove the equalities (a), (b) and (c) from \cref{eqn:letters} in turn. For (a), note that by \cref{lem:harmonic_sines} and scale invariance,  \begin{equation}\label{eqn:a}(\varphi_{\H}^{\mathsf D_{u-\delta}},p_u^{\delta,in})-(\varphi_{\H}^{\mathsf D_{u-\delta}},p_u) \overset{(d)}{=} (\varphi_{\H}^{\D^+}, f_\delta ),\end{equation} where $f_\delta$ are a sequence of uniformly bounded smooth functions supported in vanishing neighbourhoods of $\{ \pm 1\}$. The difference \eqref{eqn:a} therefore converges to $0$ in probability as $\delta\to 0$. Moreover, by Lemma \ref{lem:nested_dmp}, we have

  \[(\varphi_{\H}^{\mathsf D_{u-\delta}},p_u^{\delta,in})-(\varphi_{\H}^{\mathsf D_{u}},p_u^{\delta,in}) \overset{a.s.}{=} (\varphi_{\mathsf D_{u-\delta}}^{\mathsf D_{u}},p_u^{\delta,in}) \overset{(d)}{=} (h^{\mathsf D_{u-\delta}},p_u^{\delta, in} )- (h_{\mathsf D_{u-\delta}}^{\mathsf D_{u}}, p_u^{\delta, in}).\]
Both terms on the right-hand side also converge to $0$ in probability as $\delta\to 0$ by scaling again, and the Dirichlet boundary condition assumption.
Putting these facts together gives (a).

Equality (b) follows by a very similar distributional equality to \eqref{eqn:a}, again using Lemma \ref{lem:harmonic_sines}. Finally (c) holds, since  $$(\varphi_{\H}^{\mathsf D_{u-\delta}},p_u^{\sqrt{\frac{u}{u-\delta}}-1, out})- (h^\H,p_u^{\sqrt{\frac{u}{u-\delta}}-1, out})=-(h_{\H}^{\mathsf D_{u-\delta}}, p_u^{\sqrt{\frac{u}{u-\delta}}-1, out})$$ almost surely  and the right hand side (again by scaling) can be seen to converge to 0 in probability as $\delta \downarrow 0$.
\end{proof}

\section{A characterisation of Brownian motion}\label{sec:char_BM}

\begin{prop}\label{prop:char_BM}
	Suppose that $(Y(u))_{u\in (0,\infty)}$ is a centred stochastic process. For $u>0$, write $\mathcal{F}_u^+:=\sigma(Y_s: s\ge u)$, $\mathcal{F}_u^-:=\sigma(Y_s: s\le u)$, and for $0<s<r$ let $\mathcal{F}_{s,r}$ be the $\sigma$-algebra generated by $\mathcal{F}_s^-$ and $\mathcal{F}_r^+$. Suppose that:
	\begin{enumerate}[(i)]
		\item $(Y(u))_{u\in (0,\infty)}$ is stochastically continuous, i.e., for any $u_0\in (0,\infty)$, $Y_u\to Y_{u_0}$ in probability as $u \to u_0$;
		\item for some $\xi>0$, $\mathbb{E}[|Y(u)|^\xi]<\infty$ for all $u\in (0,\infty)$;
		\item $Y$ satisfies Brownian scaling, that is, $(Y(cu))_{u> 0}$ has the same law as $(\sqrt{c}Y(u))_{u> 0}$ for any $c>0$;
		\item for any $u>0$, $(Y(s)-Y(u))_{s\ge u}$ is independent of $\mathcal{F}_u^-$;
	\item for any $0< s<r$ $(Y(u)-(\frac{u-s}{r-s}Y(r)+\frac{r-u}{r-s}Y(s)))_{u\in (s,r)}$ is independent of $\mathcal{F}_{s,r}$.	\end{enumerate}
	Then there exists a modification of $Y$ that is equal to $\sigma B$ in law for some $\sigma\ge 0$, where $B$ is a standard one-dimensional Brownian motion.
\end{prop}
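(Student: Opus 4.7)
The plan is to establish that $Y$ has independent, centred Gaussian increments whose variance is proportional to length, since such a process has the finite-dimensional distributions of $\sigma B$ for $B$ a standard Brownian motion, and hence admits a continuous modification by Kolmogorov's criterion. The main new input beyond assumptions (iii) and (iv) (which already produce a self-similar process with independent increments) is the harness property (v), and I will exploit it via Bernstein's classical theorem.

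Fix $0<s<r$ and take $u=(s+r)/2$. Writing $A:=Y(u)-Y(s)$ and $B:=Y(r)-Y(u)$, these are independent by (iv), and a direct computation gives
\[
X_u:=Y(u)-\tfrac{u-s}{r-s}Y(r)-\tfrac{r-u}{r-s}Y(s)=\tfrac12(A-B).
\]
The harness property (v) says $X_u$ is independent of $\mathcal{F}_{s,r}$ and in particular of $(Y(s),Y(r))$, equivalently of $(Y(s),A+B)$. Since $X_u$ is a function of the increments $A,B$ (which are themselves independent of $Y(s)$ by (iv)), it follows that $(A-B)\perp(A+B)$. Bernstein's theorem applied to the independent pair $A,B$ then forces both to be centred Gaussian -- with no moment assumption needed. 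As $s,r$ were arbitrary this gives, after relabelling, that every increment $Y(b)-Y(a)$ with $0<a<b$ is centred Gaussian.

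It remains to upgrade this to Gaussianity of the process. Scaling (iii) combined with (ii) (which ensures $Y(1)$ is tight) implies $Y(a)\stackrel{d}{=}\sqrt{a}\,Y(1)\to 0$ in probability as $a\to 0$, so by stochastic continuity $Y(u)=\lim_{a\to 0}(Y(u)-Y(a))$ in probability is itself centred Gaussian. Joint Gaussianity of any finite family $(Y(u_1),\dots,Y(u_n))$ then follows from (iv): writing each $Y(u_k)$ in terms of its consecutive (independent, Gaussian) increments makes the vector a linear image of a centred Gaussian vector. Setting $\sigma^2:=\Var Y(1)$, scaling gives $\Var Y(u)=u\sigma^2$, and independent increments give $\cov(Y(s),Y(u))=(s\wedge u)\sigma^2$, matching the covariance of $\sigma B$. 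Kolmogorov's continuity criterion (using the Gaussian fourth moment $\mathbb{E}|Y(u)-Y(s)|^4=3\sigma^4(u-s)^2$) then supplies a continuous modification, which is the required $\sigma B$ in law.

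The main obstacle -- and the only step where (v) is used -- is the Bernstein input: without the harness property, conditions (i)--(iv) alone admit the (centred) $\alpha$-stable Lévy processes ($\alpha\in(1,2)$) as counterexamples, so it is precisely (v) that forces Gaussianity. Crucially, Bernstein's theorem requires no moment assumption whatsoever, which is exactly why the proof goes through under the very weak hypothesis $\xi>0$ in (ii); the remaining tightness needed to push $Y(a)\to 0$ uses only the existence of $\xi$-th moments together with scaling. Everything else is routine bookkeeping.
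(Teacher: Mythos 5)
Your proof is correct, but it takes a genuinely different route from the paper's. The paper first bootstraps the moment assumption: using (iv), (v) with $(s,u,r)=(\delta,1,2)$, scaling, and the identity $Y(1)(Y(2)-Y(1))=\tfrac14 Y(2)^2-Z^2$ with $Z$ independent of $Y(2)$, it upgrades $\xi$-th moments to square-integrability by a doubling induction (via \cref{lem:indep_moments}), and then invokes Weso\l{}owski's characterisation \cite[Theorem 1]{Wes93} of square-integrable processes with linear conditional expectation and quadratic conditional variance. You instead apply the Bernstein characterisation directly to the two half-increments about the midpoint $u=(s+r)/2$: (iv) gives $A=Y(u)-Y(s)$ independent of $B=Y(r)-Y(u)$, while (v) gives that $A-B=2X_u$ is independent of $\mathcal{F}_{s,r}$, which contains $A+B=Y(r)-Y(s)$; hence both half-increments are Gaussian with no moment hypotheses, and every increment $Y(b)-Y(a)$ is covered by choosing $(s,u,r)=(a,b,2b-a)$. (One precision: Bernstein's original theorem assumes finite variance; the moment-free version you need is the Kac--Bernstein/Darmois--Skitovich theorem, so cite that form.) The remaining assembly -- $Y(a)\to 0$ in probability by scaling, Gaussianity of $Y(u)$ as a limit, joint Gaussianity from independent increments, the covariance $\sigma^2(s\wedge u)$, and Kolmogorov continuity from the Gaussian fourth moment -- is routine and correct. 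What each approach buys: yours is self-contained, avoids both the bootstrap and the appeal to \cite{Wes93}, and makes explicit that assumptions (i)--(ii) are essentially superfluous here, which substantiates the paper's remark (after \cref{cor:sine_avg_gaussian}) that any $\xi>0$ suffices; the paper's route leans on an existing characterisation that only needs the weaker conditional-moment form of (v), at the price of the $L^2$ bootstrap, which is the part of its argument with independent interest. One cosmetic remark: the parenthetical that $A,B$ are independent of $Y(s)$ is not needed; the two facts doing the work are $A\perp B$ and $(A-B)\perp(A+B)$.
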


Observe that for this characterisation we only require $\xi>0$, we will comment later on why we need existence of $(1+\eps)$ moments for the main result of this paper. Also observe that by scaling, for any process $Y$ as in the statement of the proposition, $Y(\delta)$ is equal in distribution to $\sqrt{\delta} Y(1)$ for every $\delta$, and so tends to $0$ in probability as $\delta \to 0$. 

This proposition is very close to the main result of \cite{Wes93}, which is essentially the same but requires square-integrability of the process $Y$. Indeed, we will prove the proposition by showing square-integrability and then appealing to \cite{Wes93}.

We also remark that there is a similar characterisation of Brownian motion in \cite[Theorem 1.9]{BPR18}; the major difference being item $(vi)$. In \cite{BPR18} we assumed that the process in $(vi)$ has the law of a scaled version of the original process. This is stronger than the statement here, which assumes nothing about the law. On the other hand, only finiteness of logarithmic moments was assumed in \cite{BPR18}, which is (slightly) weaker than the moment assumption $(ii)$ above.

For some motivation, let us first see the important corollary of this characterisation for the purposes of the present article. The proof of \cref{prop:char_BM} will follow immediately after.

\begin{corollary}\label{cor:sine_avg_gaussian}	Let $h^\H$ be a sample from $\Gamma^\H$, and define the process $Y$ via \[Y(u):=(h^\H, p_u) \text{ for } u\ge 0,\]
	where the right hand side is as defined in  \cref{prop:def_sine_avg} and
\cref{rmk:hpu_process}. Then $Y$ satisfies the conditions of \cref{prop:char_BM}, and hence has a modification with the law of $\sigma$ times a Brownian motion for some $\sigma\ge 0$.
\end{corollary}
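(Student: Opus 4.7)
The plan is to verify the five hypotheses (i)–(v) of \cref{prop:char_BM} for $Y(u)=(h^\H,p_u)$ using the two representations of \cref{prop:def_sine_avg}, the domain Markov property, and conformal invariance. Hypotheses (ii), (iii) and (iv) should follow fairly directly from the set-up of Section~\ref{sec:sine_avgs}; the harness property (v) is the main technical challenge, and stochastic continuity (i) is then an easy consequence of (iv).

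For (ii), I would fix any $r>u$ and use $Y(u)=(\varphi_\H^{\mathsf D_u},p_r)=\sqrt r\int_0^\pi \sin\theta\,\varphi_\H^{\mathsf D_u}(e^{i\theta}/\sqrt r)\,d\theta$ from \eqref{eq:hpu_alt_def}. Normalising $\tfrac12\sin\theta\,d\theta$ to a probability measure on $[0,\pi]$ and applying Jensen's inequality with $p=\xi\wedge 2>1$, the problem reduces to bounding $\mathbb{E}[|\varphi_\H^{\mathsf D_u}(e^{i\theta}/\sqrt r)|^p]$, which by \cref{lem:moment_bound} is at most $C(1+|\log\sin\theta|)$ (the relevant ratio of distances is $O(|\sin\theta|^{-1})$) and is integrable against $\sin\theta\,d\theta$. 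For (iii), I would apply \cref{ass:ci_dmp}(iii) to the conformal self-map $f(z)=\sqrt c\,z$ of $\H$, yielding $(h^\H,p_u^{\delta,\mathrm{in}})\overset{(d)}{=}(h^\H,\tfrac1c p_u^{\delta,\mathrm{in}}(\cdot/\sqrt c))$ jointly in $u$. A direct change of variables shows the density $\tfrac1c p_u^{\delta,\mathrm{in}}(\cdot/\sqrt c)$ corresponds (in suitably smoothed form) to $\sqrt c\,p_{u/c}$, and passing to the limit $\delta\downarrow 0$ via \cref{prop:def_sine_avg} gives $(Y(cu))_{u>0}\overset{(d)}{=}(\sqrt c\,Y(u))_{u>0}$. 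For (iv), decompose $h^\H=h_\H^{\mathsf D_u}+\varphi_\H^{\mathsf D_u}$ by the DMP: the key identity $(\varphi_\H^{\mathsf D_u},p_s)=Y(u)$ for every $s>u$ (a direct application of \cref{prop:def_sine_avg} with $r$ replaced by $s$) shows that by linearity $Y(s)-Y(u)=(h_\H^{\mathsf D_u},p_s)$ is $\sigma(h_\H^{\mathsf D_u})$-measurable for $s\ge u$. On the other hand, for $s\le u$, $Y(s)=(\varphi_\H^{\mathsf D_s},p_{r'})$ for $r'>s$, and since $\varphi_\H^{\mathsf D_s}$ is determined by $h^\H$ outside $\mathsf D_s\supset\mathsf D_u$, it is $\sigma(\varphi_\H^{\mathsf D_u})$-measurable. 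Thus $\mathcal{F}_u^-\subset\sigma(\varphi_\H^{\mathsf D_u})$, and the DMP independence of $h_\H^{\mathsf D_u}$ from $\varphi_\H^{\mathsf D_u}$ yields (iv).

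The main work is in the harness (v). My plan is to apply the DMP to $h^\H$ in the half-annulus $A_{s,r}:=\mathsf D_s\setminus\overline{\mathsf D_r}$ (a simply connected Jordan domain), writing $h^\H=h_\H^{A_{s,r}}+\varphi$ where $\varphi:=\varphi_\H^{A_{s,r}}$. The first step is an analogue of \cref{lem:harm_0_boundary}: $\varphi$ extends continuously by zero to the two real-line segments of $\partial A_{s,r}$; the proof carries over essentially verbatim, using scaling, conformal invariance, and the Dirichlet boundary assumption. Granting this, \cref{lem:harmonic_sines}(c) implies that $L(u):=(\varphi,p_u)$ is an affine function of $u\in(s,r)$. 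To identify $L$, I would adapt the chain of equalities \eqref{eqn:letters} from the proof of \cref{prop:def_sine_avg}: approaching $u\downarrow s$ (resp.\ $u\uparrow r$) with the smoothings $p_u^{\delta,\mathrm{out}}$ (resp.\ $p_u^{\delta,\mathrm{in}}$), the error terms become testings of the \emph{interior} field $h_\H^{A_{s,r}}$ against approximants whose support approaches $\partial A_{s,r}$, and these vanish in probability by the Dirichlet boundary assumption for $h_\H^{A_{s,r}}$; the remaining terms give $Y(s)$ and $Y(r)$ respectively. Affinity of $L$ then forces $L(u)=\tfrac{r-u}{r-s}Y(s)+\tfrac{u-s}{r-s}Y(r)$, so that $Y(u)-L(u)=(h_\H^{A_{s,r}},p_u)$ is $\sigma(h_\H^{A_{s,r}})$-measurable. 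A measurability argument parallel to (iv), applied to $v\le s$ (via the first representation in \eqref{eq:hpu_alt_def}) and $v\ge r$ (via the second), shows $\mathcal{F}_{s,r}\subset\sigma(\varphi)$, and DMP independence yields (v).

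Stochastic continuity (i) is then painless: by (iv), $Y(u)-Y(u_0)=(h_\H^{\mathsf D_{u_0}},p_u)$ for $u>u_0$, and by conformal invariance the law of this increment is that of $\sqrt{u_0}\,(h^{\D^+},p_{u/u_0})$. As $u\downarrow u_0$, the support of $p_{u/u_0}$ collapses to $\partial\D^+\cap\H$, and a direct computation of the Green's function integral gives $\|p_{u/u_0}^{\delta,\mathrm{in}}\|_{H^{-1}(\D^+)}\to 0$, so convergence to $0$ in probability follows from the Dirichlet boundary assumption. The case $u\uparrow u_0$ is symmetric, handled via the dual representation in \eqref{eq:hpu_alt_def}. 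The hardest part of the argument is the boundary-value identification in (v), which requires a careful extension of the limiting technology of Section~\ref{sec:sine_avgs} — both the zero-boundary result for the harmonic part and the smoothing/approximation scheme of \cref{prop:def_sine_avg} — to the annular setting.
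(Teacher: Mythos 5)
Your proposal follows the same architecture as the paper's proof: verify (i)--(v) of \cref{prop:char_BM} using the two representations in \eqref{eq:hpu_alt_def}, the domain Markov decomposition in $\sou$ for (iv), and the decomposition in the half-annulus $A_{s,r}=\sos\setminus\overline{\sor}$ together with \cref{lem:harmonic_sines}(c) and a zero-boundary extension of the harmonic part for the harness (v). Some individual steps take mildly different (but valid) routes: for (ii) the paper simply notes that $Y(u)$ is an $L^1$ limit of centred variables (so $\xi=1$ works, and centredness --- which your write-up does not address but is a hypothesis of \cref{prop:char_BM} --- comes for free), whereas you go through Jensen and \cref{lem:moment_bound}; for (iii) you apply conformal invariance directly to the smoothed densities $p_u^{\delta,\mathrm{in}}$ and pass to the limit, which is arguably cleaner than the paper's route through the scale invariance \eqref{eq:SI_harm} of the harmonic parts; for (i) you propose a direct $H^{-1}$ estimate plus the Dirichlet assumption, while the paper recycles equality (a) of \eqref{eqn:letters}; and for the endpoint identification in (v) you propose a smoothing chain in the spirit of \eqref{eqn:letters}, where the paper uses the nested decomposition $\varphi_\H^{A_{r,s}}=\varphi_\H^{\sos}+\varphi_{\sos}^{A_{r,s}}$ and dominated convergence. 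All of these are workable.

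The one justification you should repair is in (iv): you assert that $\varphi_\H^{\mathsf D_s}$ is ``determined by $h^\H$ outside $\mathsf D_s$'' and hence $\sigma(\varphi_\H^{\sou})$-measurable. Measurability of the harmonic part with respect to the exterior restriction of the field is not established anywhere in the paper's axiomatic framework (uniqueness of the decomposition, \cref{lem:unicity_decomposition}, does not give it), and it is not an obvious consequence of \cref{ass:ci_dmp}. Fortunately the conclusion you need is immediate by the paper's own device, which you already use elsewhere: for $s\le u$ the support of $p_s^{\delta,\mathrm{out}}$ lies outside $\overline{\mathsf D_s}\supset\overline{\sou}$, so $(h_\H^{\sou},p_s^{\delta,\mathrm{out}})=0$ a.s.\ and $Y(s)=\lim_{\delta\downarrow 0}(\varphi_\H^{\sou},p_s^{\delta,\mathrm{out}})$ is measurable with respect to $\sigma(\varphi_\H^{\sou})$; independence of the increments from $\mathcal{F}_u^-$ then follows from the DMP. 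The same support argument (with $p_v^{\delta,\mathrm{out}}$ for $v\le s$ and $p_v^{\delta,\mathrm{in}}$ for $v\ge r$) is what should back up the claim $\mathcal{F}_{s,r}\subset\sigma(\varphi_\H^{A_{s,r}})$ in your step (v). With that substitution the proposal is correct.
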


\begin{remark}
 We note that this result actually holds even if we only have $\xi>0$ in Assumption \ref{ass:ci_dmp}, (i). This suggests that the answer to Question \ref{Q:xi} is positive.
\end{remark}

\begin{proof}
Since $Y(u)$ is the $L^1$ limit of $(h^\H,p_u^{\delta,in})$ as $\delta\to 0$, and $(h^\H,p_u^{\delta,in})$ is centred for every $\delta$ and $u$, it follows that $Y$ is a centred process.
So, it suffices to prove the conditions (i)-(vi) of \cref{prop:char_BM}.
		\begin{enumerate}[(i)]
			\item Equality (a) from \eqref{eqn:letters} in the proof of  \cref{prop:def_sine_avg}, plus Lemma \ref{lem:harmonic_sines}, tells us that
			\[ (h^\H,p_1)-(h^\H,p_{1-\delta})\to 0\]
			in probability as $\delta\to 0$. Moreover by scale invariance (see (iii) below) we have that $|(h^\H,p_s)-(h^\H,p_t)|$ is equal in distribution to $\sqrt{s\vee t}\, |(h^\H,p_1)-(h^\H,p_{(s\wedge t)/(s \vee t)})|$. This gives the stochastic continuity.
			\item This holds with $\xi=1$ since $Y(u)$ is defined as a limit in $L^1$ for all $u$.
			\item (Scale invariance) We assume without loss of generality that $c>1$. First, we  claim that
			\begin{equation}\label{eq:SI_harm}
			(z\mapsto \varphi^{\mathsf D_{cu}}_\H(z), z \in \mathsf D_{cu})_{u\ge 0} \text{ and } (z\mapsto \varphi^{\sou}_\H(\sqrt{c}z) , z \in \mathsf D_{cu})_{u\ge 0}
			\end{equation}
			have the same law as processes (of harmonic functions) in $u$, in the sense that the finite dimensional marginals of both sides have the same laws.
			
			The statement for one dimensional marginals is a special case of \cref{lem:harm_ci}. For the higher dimensional marginals, since the argument with $n$ points is very similar, we will just show equality in law for the joint distribution at two points $u < u'$. For this, we use uniqueness of the domain Markov decomposition to write
			$$
			( \varphi_{\H}^{\mathsf D_{cu}} , \varphi_{\H}^{\mathsf D_{cu'}}    ) \overset{(d)}{=} (\varphi_{\H}^{\mathsf D_{cu}} , \varphi_{\H}^{\mathsf D_{cu}} +  \varphi_{\mathsf D_{cu}}^{\mathsf D_{cu'}}    ) \; \text{ and }  \; 	( \varphi_{\H}^{\mathsf D_{u}} , \varphi_{\H}^{\mathsf D_{u'}}    ) \overset{(d)}{=} (\varphi_{\H}^{\mathsf D_{u}} , \varphi_{\H}^{\mathsf D_{u}} +  \varphi_{\mathsf D_{u}}^{\mathsf D_{u'}}    )
			$$
where $\varphi_{\mathsf D_{cu}}^{\mathsf D_{cu'}}$ is independent of $ \varphi_{\H}^{\mathsf D_{cu}}$ and $\varphi_{\mathsf D_{u}}^{\mathsf D_{u'}}$ is independent of $ \varphi_{\H}^{\mathsf D_{u}}$ .  Using this independence, and Lemma \ref{lem:harm_ci} again, we obtain \eqref{eq:SI_harm}.

Now we complete the proof of scale invariance as follows. Fix $u>0$. By definition of the measures $p_u$,
		\begin{eqnarray*} (h^\H, p_{cu}) & \overset{\eqref{eq:hpu_alt_def}}{=} &
	(\varphi_{\H}^{\mathsf D_{cu}},\; p_{2cu})\\ & = & \sqrt{2cu} \int_0^\pi \sin(\theta) \varphi^{\mathsf D_{cu}}_\H (\frac{e^{i\theta}}{\sqrt{2cu}}) \, d\theta \\ &= &\sqrt{c}\sqrt{2u} \int_0^\pi \sin(\theta) \varphi^{\mathsf D_{u}}_\H (\sqrt{c}\frac{e^{i\theta}}{\sqrt{2cu}}) \, d\theta \\ & = & \sqrt{c} (\varphi_{\H}^{\sou},\;p_{2u})\\ & \overset{\eqref{eq:hpu_alt_def}}{=} & \sqrt{c} (h^\H, p_u) 
		\end{eqnarray*}
where we used \cref{eq:SI_harm} in the third equality. Applying the same string of equalities for finite dimensional marginals, 
 we get the result.

			\item Fix $u\ge 0$ and observe that since $Y(s)=\lim_{\delta\downarrow 0}(h^\H,p_s^{\delta,out})=\lim_{\delta \downarrow 0} (\varphi_{\H}^{\sou},p_s^{\delta,out})$ for $s\le u$, $\mathcal{F}_u^-$ is independent of $h_\H^{\sou}$. This means that when we write (see \cref{lem:nested_dmp}) \[\varphi_{\H}^{\sor}=\varphi_{\H}^{\sou}+\varphi_{\sou}^{\sor}\; ; \;\; r\ge u,\] we have that $\varphi_{\sou}^{\sor}$ is independent of $\mathcal{F}_u^-$. Then since
			\[Y(r) \overset{\eqref{eq:hpu_alt_def}}{=}(\varphi_{\H}^{\sor},p_{2r})=(\varphi_{\H}^{\sou},p_{2r})+(\varphi_{\sou}^{\sor},p_{2r})\overset{\eqref{eq:hpu_alt_def}}{=}Y(u)+(\varphi_{\sou}^{\sor},p_{2r}),\]
			we reach the desired conclusion.
		\item Let us write $A_{r,s}:= \sos\setminus\overline{\sor}$. Reasoning as in the proof of (iv), we see that in the decomposition $$h^\H=h_\H^{A_{r,s}}+\varphi_\H^{A_{r,s}},$$ $h_\H^{A_{r,s}}$ is independent of $\mathcal{F}_{s,r}$. Hence, we must argue that 
		\begin{equation} \label{eq:harness_cond}(\varphi_\H^{A_{r,s}},p_u)=\frac{u-s}{r-s}Y(r)+\frac{r-u}{r-s}Y(s) \text{ for all } u\in (s,r).\end{equation}
		Now, by \cref{lem:harmonic_sines} we know that the left hand side of \eqref{eq:harness_cond} is a.s.\,a linear function of $u\in (s,r)$, so we just need to prove that its limit as $u\downarrow s$ is equal to $Y(s)$, and as $u\uparrow r$ is equal to $Y(r)$.
		
		Let us prove the first limit, the second one being very similar.
		For this, write
		\[ \lim_{u\downarrow s} (\varphi_\H^{A_{r,s}},p_u)=\lim_{u\downarrow s} (\varphi_\H^{\sos},p_u)+\lim_{u\downarrow s}(\varphi_{\sos}^{A_{r,s}}, p_u)=Y(s)+\lim_{u\downarrow s}(\varphi_{\sos}^{A_{r,s}}, p_u)\]
		and observe that by \cref{ass:ci_dmp} (iv), \[\varphi_{\sos}^{A_{r,s}} \text{ is harmonic in } A_{r,s} \text{ and goes to zero on } \partial(\sos)\cup(-\frac{1}{\sqrt{s}},-\frac{1}{\sqrt{r}})\cup(\frac{1}{\sqrt{r}},\frac{1}{\sqrt{s}}).\]
		This implies that $|\varphi_{\sos}^{A_{r,s}}|$ is uniformly bounded in a neighbourhood of $\partial(\sos)$ in $\sos$, and hence, by dominated convergence, we deduce that $\lim_{u\downarrow s}(\varphi_{\sos}^{A_{r,s}}, p_u)=0.$
			\end{enumerate}
\end{proof}

\begin{proof}[Proof of \cref{prop:char_BM}]
	This almost follows from  \cite[Theorem 1]{Wes93}, except for the square integrability condition. So first, we will prove that
		\begin{equation}
		\label{eqn:second_moment}
	\mathbb{E}[|Y(u)|^2]<\infty \;\; \forall u\in [0,\infty).\end{equation}
		To do this, pick some $n$ such that $2^{-n}\le \xi$, so that by assumption $\mathbb{E}[|Y(u)|^{2^{-n}}]<\infty$ for all $u$. We will prove that for any $m\ge 0$,
	\begin{equation}\label{eq:induction}
	\mathbb{E}[|Y(u)|^{2^{-m}}]<\infty \;\; \forall u\in [0,\infty) \; \Rightarrow \mathbb{E}[|Y(u)|^{2^{-m+1}}]<\infty \;\; \forall u\in [0,\infty),
	\end{equation} from which the result follows by induction, starting with $m=n$.

So, let us take some $m\ge 0$ and assume that the left hand side of \eqref{eq:induction} holds. Denote $\eta:=2^{-m}$ and first observe that $\mathbb{E}[|Y(2)-Y(1)|^\eta]<\infty$, since $|x+y|^\eta\le |x|^\eta + |y|^\eta$. By independence of $(Y(2)-Y(1))$ and $Y(1)$ (condition (iv) of \cref{prop:char_BM}), this implies that $\mathbb{E}[|Y(1)(Y(2)-Y(1))|^\eta]<\infty$. Now we apply condition (v) of \cref{prop:char_BM}. Applying this with $s=\delta, u=1, r=2$ for any $\delta\in (0,1)$ tells us that we can write $Y(1)=\frac{1-\delta}{2-\delta}Y(2)+\frac{1}{2-\delta}Y(\delta)+Z(\delta)$, where $(Z(\delta))_{ \delta\in(0,1)}$ is independent of $Y(2)$. Sending $\delta$ to $0$ (and using, as noted before, that $Y(\delta)\to 0$ in probability as $\delta\to 0$) implies that $Y(1)=Y(2)/2+Z$, where $Z$ is independent of $Y(2)$. Hence \[Y(1)(Y(2)-Y(1))=(\frac{Y(2)}{2}+Z)(\frac{Y(2)}{2}-Z)=\frac{Y(2)^2}{4} - Z^2\] has a finite moment of order $\eta$. Applying \cref{lem:indep_moments}, we obtain that $|Y(2)|^2$ has a finite moment of order $\eta$, and hence by scale invariance (condition (iii) of \cref{prop:char_BM}), that $\mathbb{E}[|Y(u)|^{2\eta}]<\infty$ for all $u\in [0,\infty)$. This completes the proof of the induction step, \eqref{eq:induction}, and therefore of \eqref{eqn:second_moment}.

From here, we can appeal to the characterisation in \cite[Theorem 1]{Wes93} of stochastic processes with linear conditional expectation and quadratic conditional variance. This says that if $Y$ is a process as in \cref{prop:char_BM}, that in addition
\begin{itemize}
	\item {is defined and stochastically continuous on $[0,\infty)$ with $Y(0)=0$,}
	\item has $Y(u)$ square integrable for every $u$,
	\item has $\mathbb{E}[Y(u)Y(s)]=\mathbb{E}[Y(u\wedge s)^2]=\sigma (u\wedge s)$ for some $\sigma\ge 0$ and all $u,s\in [0,\infty)$
\end{itemize}
then $Y$ must be $\sigma$ times a standard Brownian motion.
{Note that by the discussion immediately after the statement of \cref{prop:char_BM}, we can extend $Y$ to a stochastically continuous process on $[0,\infty)$ with $Y(0)=0$.} We also get the third point above by the assumption of Brownian scaling, plus the fact that the process is centred with independent increments. Hence \cite[Theorem 1]{Wes93} provides the result.
\end{proof}

\section{Gaussianity of circle averages}\label{sec:circ_avg_gaussian}
In this section we work with a sample $h^\D$ from $\Gamma^\D$. For any $\eps>0$ we can define the circle average $h_\eps(0)$ at radius $\eps$ around $0$ via
\[  h^\D_\eps(0):= \varphi_\D^{B_0(\eps)}(0)\]
as in \cite{BPR18}.
Our next goal is to relate these circle averages to the sine averages from Section \ref{sec:sine_avgs}. This will allow us to show (using \cref{cor:sine_avg_gaussian}) that the circle average process possesses a modification that is continuous in $\eps$, and will in turn imply that $(h^\D_{\e^{-t}}(0))_{t\ge 0}$ (which has independent and stationary increments by conformal invariance and the domain Markov property) is a Brownian motion. From this it will follow that $h^\D_\eps(0)$ is Gaussian for any $\eps>0$.

To begin, we will explain how the sine averages from \cref{sec:sine_avgs} can make sense for $h^D$ with some specific domains $D\ne \H$. Essentially, this is due to the domain Markov property, which allows us to relate $h^D$ with $h^\H$ in such a way that the sine average of one is the sine average of the other plus the sine average of a harmonic function.

For example, let us start with $D=\D^+$. By the domain Markov property, we can decompose $h^\H$ in the upper unit semi disc $\D^+$ as the independent sum $$h^\H=h_\H^{\D^+}+\varphi_\H^{\D^+},$$ and we already know that:
\begin{itemize}
		\item for any $u\ge 1$, $(h^{\H},p_u^{\delta, in})\to (h^\H,p_u)$ in probability and in $L^1$ as $\delta\to 0$;
		\item for any $u>1$, $(\varphi_{\H}^{\D^+},p_u^{\delta,in})\to (\varphi_{\H}^{\D^+},p_u) \text{ a.s.\ and in } L^1 \text{ as } \delta\to 0$, where  $(\varphi_\H^{\D^+},p_u)$ is a.s.\ constant in $u>1$;
		\item $(\varphi_{\H}^{\D^+},p_1^{\delta,in})$ converges to this constant value in probability and in $L^1$ as $\delta\to 0$ (using \eqref{eq:vp_const} and the argument explained just after).

\end{itemize}

For the first bullet point we have used \cref{prop:def_sine_avg}, and for the second, \cref{lem:harmonic_sines} plus the fact that $\varphi_{\H}^{\D^+}$ is almost surely harmonic in $\D^+$ and can be extended continuously to $0$ on $(-1,1)$ (\cref{lem:harm_0_boundary}).

This implies that for each $u\ge 1$, $$\lim_{\delta\to 0}(h_\H^{\D^+},p_{u}^{\delta, in})=:(h^{\D^+},p_u)$$ exists in probability and in $L^1$. Similarly, the joint limit 
$\lim_{\delta\to 0}((h_\H^{\D^+},p_{u_1}^{\delta, in}),\dots, (h_\H^{\D^+},p_{u_n}^{\delta, in}))$ 
exists in probability and in $L^1$ for any $(u_1,\cdots, u_n)$ with each $u_i\in [1,\infty)$. Notice that, by the above observations, the limit of such a vector must be equal in law to $((h^\H,p_{u_1}),\dots,(h^\H,p_{u_n}) )$ plus the (random) vector $((\varphi_{\H}^{\D^+},p_{u_1}),\dots, (\varphi_{\H}^{\D^+},p_{u_n}))$, whose components are almost surely all equal. Notice further that
$(h_{\H}^{\D^+},p_1^{\delta,in})\to 0$ in $L^1$ and in probability as $\delta\downarrow 0$ (by the Dirichlet boundary condition assumption), so that $(h^{\D^+},p_1)=0$.

Putting all this together with \cref{cor:sine_avg_gaussian}, we obtain the following:

\begin{lemma}\label{lem:bmdplus} Let $h^{\D^+}$ be a sample from $\Gamma^{\D^+}$. Then for any $(u_1,\cdots, u_n)$ with $u_i\in [1,\infty)$ for $1\le i \le n$, the limit
	\[ \lim_{\delta\downarrow 0}\left((h^{\D^+},p_{u_1}^{\delta,in}),\ldots, (h^{\D^+},p_{u_n}^{\delta,in})\right)=\left((h^{\D^+},p_{u_1}),\ldots,(h^{\D^+},p_{u_n}) \right) \]
	exists in probability. Moreover, $(h^{\D^+},p_{1+t})_{t\ge 0}$ has the same finite dimensional distributions as some multiple (which is the same as that in \cref{cor:sine_avg_gaussian}) of Brownian motion.
\end{lemma}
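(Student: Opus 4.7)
The plan is to reduce everything to facts already established for $h^\H$ by invoking the domain Markov decomposition $h^\H = h_\H^{\D^+} + \varphi_\H^{\D^+}$. By the domain Markov property (Assumption \ref{ass:ci_dmp}(iv)), $h_\H^{\D^+}$ has law $\Gamma^{\D^+}$, so it suffices to prove both claims with $h^{\D^+}$ replaced by $h_\H^{\D^+}$, which lives on the same probability space as $h^\H$ and $\varphi_\H^{\D^+}$. For each fixed $\delta$ and each tuple $(u_1,\ldots,u_n)$, the linearity of the decomposition gives
\[
(h_\H^{\D^+}, p_{u_i}^{\delta, in}) \;=\; (h^\H, p_{u_i}^{\delta, in}) \;-\; (\varphi_\H^{\D^+}, p_{u_i}^{\delta, in})
\]
jointly in $i$.

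For the existence of the joint limit, I would control each of the two terms on the right-hand side as $\delta \downarrow 0$. The first term converges jointly in probability and $L^1$ to $(h^\H, p_{u_i})$ by \cref{prop:def_sine_avg} together with \cref{rmk:hpu_process}. For the second term, \cref{lem:harm_0_boundary} ensures that $\varphi_\H^{\D^+}$ almost surely extends continuously to $0$ on $(-1,1)$, so \cref{lem:harmonic_sines}(a) applies and gives that $r \mapsto (\varphi_\H^{\D^+}, p_r)$ is a.s.\ constant on $(1,\infty)$; call this constant $c$. The three bullet points preceding the lemma (including the argument surrounding \eqref{eq:vp_const} which handles the boundary value $u = 1$) show that $(\varphi_\H^{\D^+}, p_{u_i}^{\delta, in}) \to c$ in probability and $L^1$ for every $u_i \ge 1$, jointly in $i$. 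Taking the difference yields the desired joint convergence in probability, and identifies the limit as $((h^\H, p_{u_i}) - c)_i$.

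For the Brownian motion property, two small inputs remain. First, $(h_\H^{\D^+}, p_1) = 0$, because $(h_\H^{\D^+}, p_1^{\delta, in}) \to 0$ in probability and $L^1$ as $\delta \downarrow 0$ by exactly the Dirichlet boundary condition argument used in \cref{prop:def_sine_avg} (the support of $p_1^{\delta, in}$ concentrates on $\partial \D^+ \cap \H$ and $p_1^{\delta, in} \to 0$ in $H^{-1}(\D^+)$, and we are allowed to apply Assumption \ref{ass:ci_dmp}(ii) to $h_\H^{\D^+}$ since it has law $\Gamma^{\D^+}$). Combining with the previous paragraph at $t = 0$ forces $c = (h^\H, p_1)$ almost surely, and hence
\[
\bigl((h^{\D^+}, p_{1+t})\bigr)_{t \ge 0} \;\stackrel{(d)}{=}\; \bigl((h^\H, p_{1+t}) - (h^\H, p_1)\bigr)_{t \ge 0}
\]
as stochastic processes (in the sense of finite-dimensional marginals). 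By \cref{cor:sine_avg_gaussian}, the right-hand side has the finite-dimensional marginals of $(\sigma B_{1+t} - \sigma B_1)_{t \ge 0}$ for the same $\sigma \ge 0$, which by stationarity of Brownian increments has the law of $(\sigma B_t)_{t \ge 0}$.

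I do not foresee any genuine obstacle: the lemma is essentially a packaging of the bullet points set out just before it, with the only subtlety being the vanishing of the sine average at the boundary time $u = 1$, which follows verbatim from the proof of \cref{prop:def_sine_avg} applied to $h_\H^{\D^+}$.
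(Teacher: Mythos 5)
Your argument is correct and is essentially the paper's own: the lemma is proved there by exactly this decomposition $h^\H=h_\H^{\D^+}+\varphi_\H^{\D^+}$, the three bullet points (including the $u=1$ case via \eqref{eq:vp_const}), the Dirichlet boundary condition giving $(h^{\D^+},p_1)=0$, and then \cref{cor:sine_avg_gaussian} together with stationarity of Brownian increments. No gaps.
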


Next, we make sense of sine averages for $h^{\D}$. Again we can use the domain Markov property, and decompose \begin{equation}
\label{eqn:hDdecomp}
h^\D=h_\D^{\D^+}+\varphi_{\D}^{\D^+}. \end{equation}
However, deducing something from this is not quite so simple, since $\varphi_{\D}^{\D^+}$ does \emph{not} extend continuously to $0$ on $(-1,1)$. For example, since $(\varphi_{\D}^{\D^+},p_u)$ should correspond to integrating $\varphi_{\D}^{\D^+}$ on a contour that \emph{does} touch the real line, it is not immediately obvious that this integral is well defined. We can manage this using that \begin{itemize}  \item[(a)] $\varphi_{\D}^{\D^+}$ is not too badly behaved, and \item[(b)] the density $\sin(\theta)$ converges to $0$ as $\theta\to \{0,\pi\}$.\end{itemize} For this some quantitative estimates are required, and we summarise them in the following lemma:

\begin{lemma}
	\label{lem:phi_der} There exists a universal constant $C\in (0,\infty)$, such that for all $\eps>0$,
	\begin{equation}\label{eqn:bound_sup_phi} \mathbb{E}[\sup_{w\in \D^+;\, \Im(w)>\eps}|\varphi_{\D}^{\D^+}(w)|]\le C \eps^{-1/\xi} \log(1/\eps)^{1/\xi}; \text{ and }\end{equation}

	\begin{equation}\label{eqn:bound_der_phi} \mathbb{E}[\sup_{r\in [0,1],\theta\in [0,\pi]; \, \Im(r\e^{i\theta})>\eps}|\frac{\partial}{\partial r}\varphi_{\D}^{\D^+}(r\e^{i\theta})|]\le C \eps^{-1-1/\xi}\log(1/\eps)^{1/\xi},\end{equation}
	where $\xi>1$ is such that $\mathbb{E}[|(h^D,\phi)|^\xi]<\infty$ for all $D$ and $\phi\in C_c^\infty(D)$ (\cref{ass:ci_dmp}(i)).
\end{lemma}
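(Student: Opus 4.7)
The strategy is to combine the pointwise moment bound of Lemma~\ref{lem:moment_bound} with a Whitney-type covering of $A_\eps := \{z \in \D^+ : \Im(z) > \eps\}$, exploiting the subharmonicity of $|\varphi|^{\xi'}$, where $\varphi := \varphi_\D^{\D^+}$ and $\xi' := \xi \wedge 2 \in (1, 2]$. A key preliminary step is to extend $\varphi$ across the arc $\partial \D \cap \H$: by an argument analogous to Lemma~\ref{lem:harm_0_boundary} applied to the decomposition $h^\D = h^{\D^+}_\D + \varphi$, the function $\varphi$ extends continuously to zero on this arc, and Schwarz reflection then extends $\varphi$ as a harmonic function on a neighbourhood $U$ of $\overline{\D^+} \setminus \{\pm 1\}$, via $\varphi(z) = -\varphi(1/\bar z)$ for $z \in \H \setminus \overline \D$.

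I would then cover $A_\eps$ by dyadic boxes $(Q_i)$ of side lengths $\ell_i \asymp \max(\eps, \Im(z_i) - \eps)$ for $z_i \in Q_i$, chosen so that a twofold enlargement $\hat Q_i$ lies inside $U$. A standard counting by dyadic height above the segment $\{\Im = \eps\}$ gives $O(1/(2^k \eps))$ boxes at scale $\ell \asymp 2^k \eps$ and $O(\eps^{-1})$ boxes in total. For each $i$, subharmonicity of $|\varphi|^{\xi'}$ together with the classical interior gradient estimate for harmonic functions yields
\[
\sup_{Q_i}|\varphi|^{\xi'} \le C\ell_i^{-2}\int_{\hat Q_i}|\varphi|^{\xi'}\, dw, \qquad \sup_{Q_i}|\partial_r \varphi|^{\xi'} \le C\ell_i^{-\xi'-2}\int_{\hat Q_i}|\varphi|^{\xi'}\, dw.
\]
Taking expectations, using Lemma~\ref{lem:moment_bound} to bound $\mathbb{E}[|\varphi(w)|^{\xi'}] \le C\log(1/\eps)$ uniformly on $\hat Q_i$ (for $w$ in the Schwarz-reflected part of $\hat Q_i$ outside $\D$, one uses $\mathbb{E}[|\varphi(w)|^{\xi'}] = \mathbb{E}[|\varphi(1/\bar w)|^{\xi'}]$ with $1/\bar w \in \D^+$ close to the arc, where Lemma~\ref{lem:moment_bound} even gives an $O(1)$ bound), and summing over $i$ gives
\[
\mathbb{E}\big[\sup_{A_\eps}|\varphi|^{\xi'}\big] \le C \log(1/\eps) \cdot \#\{Q_i\} \le C\eps^{-1}\log(1/\eps),
\]
\[
\mathbb{E}\big[\sup_{A_\eps}|\partial_r \varphi|^{\xi'}\big] \le C \log(1/\eps) \sum_i \ell_i^{-\xi'} \le C \eps^{-1-\xi'} \log(1/\eps),
\]
where the second sum $\sum_k (2^k\eps)^{-1-\xi'}$ is geometric in $k$ and dominated by its $k=0$ term. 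The stated bounds \eqref{eqn:bound_sup_phi} and \eqref{eqn:bound_der_phi} then follow by Jensen's inequality applied to the concave map $x \mapsto x^{1/\xi'}$.

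The main technical hurdle is the Schwarz extension step, which is essential because without it the natural Whitney decomposition of $\D^+$ produces arbitrarily small boxes near the arc, causing the derivative sum $\sum_i \ell_i^{-\xi'}$ to diverge; once the extension is in place, the remaining counting and harmonic-function estimates are routine. A secondary technical point is that for $\xi > 2$ the bound is proved at the exponent $\xi' = 2$, which is consistent with Lemma~\ref{lem:moment_bound} being only stated up to second moments.
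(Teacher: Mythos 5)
Your argument is correct and arrives at the stated estimates, but by a genuinely different mechanism than the paper. The paper fixes $w$ with $\Im(w)>\eps$, represents $\varphi_\D^{\D^+}(w)$ as the harmonic-measure average of its values on the horizontal cross-section of $\D^+$ at height $\eps/2$ (the arc contributes nothing because $\varphi_\D^{\D^+}$ extends continuously to $0$ there, by \cref{lem:harm_0_boundary} together with conformal invariance and the domain Markov property), dominates the exit density by $M/\eps$ via a Cauchy-kernel comparison, and then applies H\"older plus \cref{lem:moment_bound} to a single one-dimensional integral; the derivative bound \eqref{eqn:bound_der_phi} is then deduced in one line from the interior gradient estimate. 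You instead extend $\varphi_\D^{\D^+}$ harmonically across the arc by Schwarz reflection and run a Whitney covering with the sub-mean-value inequality for $|\varphi|^{\xi'}$, $\xi'=\xi\wedge 2$. Both routes rest on the same two inputs --- continuity to $0$ on the arc (which you, like the paper, obtain by transporting \cref{lem:harm_0_boundary}) and the pointwise bound of \cref{lem:moment_bound} --- and both extract the factor $\eps^{-1}$ from essentially the same geometry ($O(\eps^{-1})$ Whitney boxes for you, the $M/\eps$ density bound for the paper); the paper's version is shorter, while yours is more local and, for \eqref{eqn:bound_der_phi}, arguably more careful: the supremum there includes points with $r=1$, so the paper's appeal to the interior gradient estimate implicitly requires the same reflection across the arc that you make explicit. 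The only discrepancy is the exponent when $\xi>2$: you obtain the bounds with $\xi\wedge 2$ in place of $\xi$, but the paper's proof carries the identical restriction (it applies \cref{lem:moment_bound}, stated only for $p\le \xi\wedge 2$), and since one may always replace $\xi$ by $\xi\wedge2$ in \cref{ass:ci_dmp}(i), and the weaker exponent is all that is used in the Borel--Cantelli step of \cref{lem:cont_sin_avg}, this is harmless.
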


\begin{proof}
	It is a standard fact (a consequence of, e.g., \cite[\S 2.2, Theorem 7]{Eva98}) that for a universal $C'>0$, for any function $\varphi$ that is harmonic in $B_z(r)\subset \C$ and for any $\mathbf{v}$ with modulus $1$,  $|\partial_{\mathbf{v}}\varphi(z)|\le (C'/r) \sup_{y\in B_z(r)}|\varphi(y)|$. Hence \eqref{eqn:bound_der_phi} follows from \eqref{eqn:bound_sup_phi}.
	
	To prove \eqref{eqn:bound_sup_phi}, let $w\in \D^+$ with $\Im(w)>\eps$ be arbitrary, and denote by $D_\eps$ the domain $\D^+\cap\{z: \Im(z)>\eps/2\}$. Let $a_\eps=\sqrt{1-\eps^2/4}$, and for $y\in [-a_\eps,a_\eps]$, let $f_w(y)$ be the density at $y+i\eps/2$ of the exit position from $D_\eps$ for a Brownian motion started from $w$. Then by harmonicity and the fact that $\varphi_{\D}^{\D^+}$ extends continuously to $0$ on $\partial D_\eps \cap \partial \D$ (by \cref{lem:harm_0_boundary}, conformal invariance and the domain Markov property) we have that
	\[ \varphi_{\D}^{\D^+}(w)= \int_{-a_\eps}^{a_\eps} f_w(y)\varphi_{\D}^{\D^+}(y+i \eps/2) \, dy. \]
This implies, using H\"{o}lder's inequality, that 

		\[ |\varphi_{\D}^{\D^+}(w)|\le \left(\int_{-a_\eps}^{a_\eps} f_w(y)dy\right)^{1/\xi^*} \left(\int_{-a_\eps}^{a_\eps} f_w(y)|\varphi_{\D}^{\D^+}(y+i \eps/2)|^\xi \, dy\right)^{1/\xi} \]
		where $\xi^*$ is such that $1/\xi+1/\xi^*=1$.
		Moreover, by domination with respect to a Cauchy density, there exists a constant $M$ not depending on $\eps>0$, such that $$0\le f_w(y)\le M/\eps \quad \forall y\in [-1,1]\, , w\in D_{2\eps}.$$ Putting this together, along with the fact that $\int_{-a_\eps}^{a_\eps} f_w(y) \, dy \le 1$, we obtain that
			\[ \sup_{w\in \D^+;\, \Im(w)>\eps} |\varphi_{\D}^{\D^+}(w)|^\xi \le  \frac{M}{\eps} \int_{-a_\eps}^{a_\eps} |\varphi_{\D}^{\D^+}(y+i \eps/2)|^\xi \, dy. \]
To conclude, we observe that by \cref{lem:moment_bound}
\[ \mathbb{E}[|\varphi_{\D}^{\D^+}(y+i \eps/2)|^\xi]\le C'' \log(1/\eps) \;\; \forall y\in [-a_\eps,a_\eps],\]
with  constant $C''$ not depending on $\eps>0$, so that
\[ \mathbb{E}[\sup_{w\in \D^+;\, \Im(w)>\eps} |\varphi_{\D}^{\D^+}(w)|]\le\mathbb{E}[\sup_{w\in \D^+;\, \Im(w)>\eps} |\varphi_{\D}^{\D^+}(w)|^\xi]^{1/\xi}\le C \eps^{-1/\xi} \log(1/\eps)^{1/\xi} \]
for some universal constant $C$, as required.
\end{proof}

This allows us to deduce the following:
\begin{lemma}\label{lem:cont_sin_avg}
Let $h^\D$ be a sample from $\Gamma^\D$ and recall the decomposition \eqref{eqn:hDdecomp}. Then for each $(u_1,\cdots, u_n)$ with  $u_i\in [1,\infty)$ for $1\le i \le n$ the limit
\begin{equation}\label{eqn:sa_hd_1} \lim_{\delta\downarrow 0}\left((h^{\D^+}_{\D},p_{u_1}^{\delta,in}),\ldots,(h^{\D^+}_{\D},p_{u_n}^{\delta,in})\right) =:\left((h^{\D^+}_\D,p_{u_1}),\ldots,(h^{\D^+}_\D,p_{u_n})\right)  \end{equation} exists in probability, and the resulting finite dimensional distributions are those of a multiple (which is the same as that in \cref{cor:sine_avg_gaussian}) of Brownian motion. Furthermore, on an event of probability one,
\begin{equation}\label{eqn:sa_hd_2} \left((\varphi^{\D^+}_{\D},p_{u}^{\delta,in})\right)_{u\ge 1} \text{  has a pointwise (in u) limit } \left((\varphi^{\D^+}_\D,p_{u})\right)_{u\ge 1} \text{ as } \delta\to 0, \end{equation}
and this limit is a continuous function. Finally, for any $1\le v<w<\infty$, there exists $M(v,w)$ such that,
	\begin{equation}\label{eqn:sa_hd_3}\mathbb{E}[ \sup_{s,t\in [v,w]}\frac{|(\varphi_{\D}^{\D^+},p_s)-(\varphi_{\D}^{\D^+},p_t)|}{|s-t|}] \le M(v,w) .\end{equation}
\end{lemma}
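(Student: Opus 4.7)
I will define the limit in \eqref{eqn:sa_hd_2} directly as the explicit integral
\begin{equation*}
J(u,\omega) := \sqrt{u}\int_0^\pi \sin\theta\; \varphi_\D^{\D^+}\!\bigl(e^{i\theta}/\sqrt u,\omega\bigr)\, d\theta, \qquad u \in [1,\infty),
\end{equation*}
establish its good pathwise properties using \cref{lem:phi_der}, and then (i) show that the smooth approximants converge to $J$, (ii) reduce \eqref{eqn:sa_hd_1} to \cref{lem:bmdplus} via the fact that $h_\D^{\D^+}$ restricted to $\D^+$ has law $\Gamma^{\D^+}$, and (iii) obtain \eqref{eqn:sa_hd_3} by bounding $J'(u)$.

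\textbf{Well-definedness, continuity and smooth approximation.} For $u \in [1,w]$ and $\theta \in (0,\pi)$, the point $e^{i\theta}/\sqrt u$ has imaginary part $\sin\theta/\sqrt u \geq \sin\theta/\sqrt w$. Applying \cref{lem:phi_der} and Fubini, the pathwise function
\begin{equation*}
G_w(\theta,\omega) := \sin\theta\; \sup_{z\in \D^+,\; \Im(z)\geq \sin\theta/(2\sqrt w)} \bigl|\varphi_\D^{\D^+}(z,\omega)\bigr|
\end{equation*}
satisfies $\E[G_w(\theta)] \leq C_w\, (\sin\theta)^{1-1/\xi}\log(1/\sin\theta)^{1/\xi}$, which is integrable on $(0,\pi)$ because $\xi>1$. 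Hence $\int_0^\pi G_w(\theta,\omega)\,d\theta<\infty$ on a full-probability event, valid simultaneously for every integer $w\geq 1$. On this event, dominated convergence shows that $u\mapsto J(u)$ is continuous on $[1,\infty)$, and that $(\varphi_\D^{\D^+},p_u^{\delta,in})\to J(u)$ as $\delta\to 0$ for every $u\geq 1$, using that $\chi^\delta\to 1$ pointwise on $(0,\pi)$ and that $\eta^\delta$ concentrates at $0$. Setting $(\varphi_\D^{\D^+},p_u) := J(u)$ gives \eqref{eqn:sa_hd_2}.

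\textbf{Reduction of \eqref{eqn:sa_hd_1} to \cref{lem:bmdplus}.} By the domain Markov property, the restriction of $h_\D^{\D^+}$ to $C_c^\infty(\D^+)$ has law $\Gamma^{\D^+}$. For $u\geq 1$ and $\delta$ small, the support of $p_u^{\delta,in}$ lies inside $\{|z|<1/\sqrt u\}\cap \H\subset \D^+$, so the joint law of $\bigl((h_\D^{\D^+},p_{u_i}^{\delta,in})\bigr)_{i=1}^n$ coincides with that of $\bigl((h^{\D^+},p_{u_i}^{\delta,in})\bigr)_{i=1}^n$. Since convergence in probability of a sequence is a property of its joint law (Cauchy-in-probability), \cref{lem:bmdplus} gives existence of the limit and identifies its finite-dimensional distributions as those of a multiple of Brownian motion --- the same multiple as in \cref{cor:sine_avg_gaussian}.

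\textbf{Part \eqref{eqn:sa_hd_3} and main obstacle.} Using $G_w$ and its analogue for $\partial_r\varphi_\D^{\D^+}$ (finite a.s.\ by the second half of \cref{lem:phi_der}), one can differentiate $J$ under the integral sign on a full-probability event to get
\begin{equation*}
J'(u) \;=\; \frac{1}{2\sqrt u}\int_0^\pi \sin\theta\; \varphi_\D^{\D^+}\!\bigl(e^{i\theta}/\sqrt u\bigr)\, d\theta \;-\; \frac{1}{2u}\int_0^\pi \sin\theta\; \partial_r\varphi_\D^{\D^+}\!\bigl(e^{i\theta}/\sqrt u\bigr)\, d\theta.
\end{equation*}
Combining both bounds of \cref{lem:phi_der} with Fubini,
\begin{equation*}
\E\!\left[\sup_{u\in[v,w]} |J'(u)|\right] \;\leq\; C(v,w)\int_0^\pi\! \Bigl[(\sin\theta)^{1-1/\xi} + (\sin\theta)^{-1/\xi}\Bigr]\log(1/\sin\theta)^{1/\xi}\,d\theta,
\end{equation*}
which is finite exactly when $\xi>1$. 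Then \eqref{eqn:sa_hd_3} is immediate from $|J(s)-J(t)|\leq |s-t|\sup_{u\in[v,w]}|J'(u)|$. The \emph{main obstacle} is the borderline integrability of the radial derivative term: the bound on $\partial_r\varphi_\D^{\D^+}$ from \cref{lem:phi_der} forces an integrand of order $(\sin\theta)^{-1/\xi}$ near $\theta=0,\pi$, which is integrable just barely when $\xi>1$. This is precisely where the $(1+\varepsilon)$-moment hypothesis is used sharply, and it reflects the fact that the sine-averaging contour touches the real line at $\pm 1/\sqrt u$ when $u=1$.
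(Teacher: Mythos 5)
Your proposal is correct and follows essentially the same route as the paper: \eqref{eqn:sa_hd_1} is reduced to \cref{lem:bmdplus} using that $h_\D^{\D^+}$ restricted to $\D^+$ has law $\Gamma^{\D^+}$, and \eqref{eqn:sa_hd_2}--\eqref{eqn:sa_hd_3} are obtained from \cref{lem:phi_der} by dominating the sine integrals and differentiating under the integral, with the $(\sin\theta)^{-1/\xi}$ term from the radial derivative being exactly where $\xi>1$ enters. The only (immaterial) difference is that you get the a.s.\ dominating bounds by Fubini applied to the first-moment estimates, whereas the paper uses Markov's inequality and Borel--Cantelli to get pathwise bounds $|\varphi_\D^{\D^+}(z)|\le \Im(z)^{-a}$ and $|\frac{\partial}{\partial r}\varphi_\D^{\D^+}(z)|\le \Im(z)^{-b}$ near the real line.
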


\begin{remark}
		In words, this tells us that the sine-average process of $h^\D$ (defined by joint limits of $(h^\D,p_u^{\delta, in})$ as $\delta\to 0$) makes sense and is a Brownian motion plus a nicely behaved continuous function whose derivative is bounded in expectation, \eqref{eqn:sa_hd_3}. The role of this key lemma is to show that when we ``average" the sine-average process over rotations (as will soon be made precise) we obtain a process with a continuous modification. The control given by \eqref{eqn:sa_hd_3} is important here to ensure that we retain continuity after averaging, and it is for this that we need the existence of moments with order strictly greater than $1$ (we remark that we have also used it in several other places for simplicity).
		
		 This is really the crux of the proof, since the resulting ``averaged'' process will actually turn out to be the circle average process for $h^\D$ around $0$ (recall from the introduction that establishing continuity of circle averages is the main step in our argument).
\end{remark}

\begin{proof}
Since $h_{\D}^{\D^+}$ has the same law as $h^{\D^+}$, the statement concerning the limit \eqref{eqn:sa_hd_1} follows from \cref{lem:bmdplus}.
To show that \eqref{eqn:sa_hd_2} holds with probability one note that by Markov's inequality, for any $\xi^{-1}<a<1$,
$$
\mathbb{P}[\sup_{w\in \D^+;\, \Im(w)>\eps}|\varphi_{\D}^{\D^+}(w)| > \ve^{-a}]\le C \eps^{a-1/\xi} \log(1/\eps)^{1/\xi}
$$
Thus applying the Borel--Cantelli lemma (to the sequence $\ve_n = 2^{-n}$) we conclude that almost surely, for any $\xi^{-1}<a<1$, $$|\varphi_{\D}^{\D^+}(z)|\le \Im(z)^{-a}$$ for all $z\in \D^+$ with $\Im(z)$ sufficiently small. This implies \eqref{eqn:sa_hd_2} (since $\sin(\arg(z))\Im(z)^{-a}\to 0$ as $\Im(z)\to 0$).
Similarly, an application of the Borel--Cantelli lemma and \eqref{eqn:bound_der_phi} allows us to deduce that for any $1+\xi^{-1}<b<2$, on an event of probability one,
$$|\frac{\partial}{\partial r}\varphi_{\D}^{\D^+}(r\e^{i\theta})|\le \Im(z)^{-b}$$ 
 for all $z\in \D^+$ with $\Im(z)$ sufficiently small. On this event, since $\int_{0}^\pi \sin(\theta)^{1-b}<\infty$, $F(u):=(\varphi_{\D}^{\D^+},p_u)$ is differentiable in $u$, and for some finite deterministic constants $\{M'(v,w)\}_{1<v<w<\infty}$, \[ |F'(r)|\le M'(v,w) \int_0^\pi \sin(\theta) |\frac{\partial}{\partial r}\varphi_{\D}^{\D^+}(\e^{i\theta}/\sqrt{r})| \, d\theta \text{ for all } r\in [v,w] \]
From this and \eqref{eqn:bound_der_phi}, \eqref{eqn:sa_hd_3} follows in a straightforward manner.
\end{proof}

Now we will relate these quantities to circle averages, by averaging over rotations. Let $h^\D$ be a sample from $\Gamma^\D$ and for $\alpha \in [0,2\pi)$, let $h^{\D,\alpha}$ be the image of $h^\D$ under an anti-clockwise rotation by angle $\alpha$. That is, $(h^{\D,\alpha},\phi)_{\phi\in C_c^\infty(\D)}=(h^{\D},\phi\circ f_\alpha)_{\phi\in C_c^\infty(\D)}$ where $f_\alpha$ denotes the isometry $z\mapsto \e^{-i\alpha}z$.

 Then by conformal (specifically, rotation) invariance, \begin{equation}
\label{eq:law_ind_a} h^{\D,\alpha}\overset{(d)}{=}h^\D\end{equation} for each fixed $\alpha$. Write $h^{\D^+}_{\D,\alpha}+\varphi^{\D^+}_{\D,\alpha}$ for the domain Markov decomposition of $h^{\D,\alpha}$ in $\D^+$.

Now let $A$ be uniformly distributed on the interval $[0,2\pi]$ (independently from $h^\D$). Then we have that:
\begin{itemize}
	\item for each $(u_1,\cdots, u_n)$ with  $u_i\in [1,\infty)$ for $1\le i \le n$ $$\lim_{\delta\downarrow 0}\left((h^{\D^+}_{\D,A},p_{u_1}^{\delta,in}),\ldots, (h^{\D^+}_{\D,A},p_{u_n}^{\delta,in})\right)=:\left((h^{\D^+}_{\D,A},p_{u_1}),\cdots, (h^{\D^+}_{\D,A},p_{u_n})\right)$$ exists a.s.\ and for any $s,t\ge 1$ \begin{equation}\label{eqn:fourth_moment_circ} \mathbb{E}[|(h_{\D,A}^{\D^+},p_s)-(h_{\D,A}^{\D^+},p_t)|^4]\le c|s-t|^2\end{equation} for some universal constant $c$ (because for each angle $\alpha$ the process $(h^{\D^+}_{\D,\alpha},p_s)_s$ is a fixed, i.e. not depending on $\alpha$, multiple of Brownian motion);
	\item  $((\varphi^{\D^+}_{\D,A},p_{u}^{\delta,in}))_{u\ge 1}$ has a pointwise limit $((\varphi^{\D^+}_{\D,A},p_{u}))_{u\ge 1}$ with probability one as $\delta\to 0$, and for any $1<v<w<\infty$, there exists $M(v,w)$ such that,
	\begin{equation}\label{eqn:circ_der_bound}\mathbb{E}[ \sup_{s,t\in [v,w]}\frac{|(\varphi_{\D,A}^{\D^+},p_s)-(\varphi_{\D,A}^{\D^+},p_t)|}{|s-t|}] \le M(v,w) .\end{equation}
\end{itemize}

This allows us to reach the following conclusion.

\begin{lemma}
	For every $u\in [1,\infty)$, the conditional expectation $$\mathbb{E}[(h^{\D,A},p_u) \, | \, h^\D ]:=\mathbb{E}[(h_{\D,A}^{\D^+},p_u)+(\varphi_{\D,A}^{\D^+},p_u) \, | \, h^\D ]$$ is well defined. This defines a stochastic process in $u$ which possesses an a.s.\ continuous modification.
\end{lemma}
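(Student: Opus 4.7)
The plan is to decompose $F(u) := \mathbb{E}[(h^{\D,A},p_u) \,|\, h^\D]$ as $F(u) = G(u) + H(u)$ with $G(u) := \mathbb{E}[(h_{\D,A}^{\D^+},p_u) \,|\, h^\D]$ and $H(u) := \mathbb{E}[(\varphi_{\D,A}^{\D^+},p_u) \,|\, h^\D]$, and to handle the two pieces separately. Well-definedness reduces to showing $L^1$ control of each summand of $(h^{\D,A}, p_u)$. The first summand is in $L^1$ since $(h_{\D,A}^{\D^+}, p_u)$ is distributed as a centred multiple of Brownian motion (first bullet above), while the second follows from an $L^1$ bound at $u=1$ (via Fatou applied to the $L^1$-convergence used to define the limit, together with \cref{lem:moment_bound}) combined with the estimate \eqref{eqn:circ_der_bound}.

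For continuity of $G$, I would invoke Kolmogorov's continuity criterion. Conditional Jensen applied to $x \mapsto x^4$, combined with \eqref{eqn:fourth_moment_circ}, gives
$$\mathbb{E}[|G(s)-G(t)|^4] \le \mathbb{E}[|(h_{\D,A}^{\D^+}, p_s) - (h_{\D,A}^{\D^+}, p_t)|^4] \le c|s-t|^2,$$
yielding an a.s.\ H\"older continuous modification of $G$ on $[1,\infty)$.

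For continuity of $H$, the idea is to transfer the a.s.\ Lipschitz regularity of $u \mapsto (\varphi_{\D,A}^{\D^+}, p_u)$ through the conditional expectation. For a compact subinterval $[v,w] \subset (1,\infty)$, the random Lipschitz constant
$$L_A^{v,w} := \sup_{s,t\in[v,w]} \frac{|(\varphi_{\D,A}^{\D^+}, p_s) - (\varphi_{\D,A}^{\D^+}, p_t)|}{|s-t|}$$
is in $L^1$ by \eqref{eqn:circ_der_bound}, so $\mathbb{E}[L_A^{v,w}\,|\,h^\D]$ is a.s.\ finite. Conditional Jensen then yields $|H(s)-H(t)| \le \mathbb{E}[L_A^{v,w} \,|\, h^\D]\cdot|s-t|$ for all $s,t$ in a countable dense subset of $[v,w]$ almost surely, which extends by density to an a.s.\ Lipschitz (in particular continuous) modification of $H$ on $[v,w]$. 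Gluing over an exhausting sequence of such compacts gives a continuous modification on $(1,\infty)$; the extension to $u=1$ follows from a right-continuity argument using the $L^1$-convergence $(\varphi_{\D,A}^{\D^+}, p_u) \to (\varphi_{\D,A}^{\D^+}, p_1)$ as $u \downarrow 1$. Summing the two modifications produces the desired continuous modification of $F$.

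The main obstacle is the Lipschitz control of $H$: it rests on \eqref{eqn:circ_der_bound}, whose proof in \cref{lem:cont_sin_avg} uses the quantitative derivative bound \eqref{eqn:bound_der_phi} from \cref{lem:phi_der}. This is precisely where the hypothesis $\xi > 1$ is essential, as the strict inequality is needed to obtain a uniform-in-expectation derivative bound on the harmonic correction; without it, the conditional-expectation argument for $H$ would break down.
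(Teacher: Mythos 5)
Your proposal is correct and follows essentially the same route as the paper: the same splitting into $\mathbb{E}[(h_{\D,A}^{\D^+},p_u)\,|\,h^\D]$ and $\mathbb{E}[(\varphi_{\D,A}^{\D^+},p_u)\,|\,h^\D]$, with Kolmogorov's criterion via \eqref{eqn:fourth_moment_circ} and the $L^4$-contraction/conditional Jensen for the first piece, and the Lipschitz bound \eqref{eqn:circ_der_bound} passed through the conditional expectation by Jensen for the second. You are merely a bit more explicit than the paper about the countable-dense-set step and the endpoint $u=1$, which is fine.
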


\begin{proof}
	Since $(h_{\D,A}^{\D^+},p_u)$ and $(\varphi_{\D,A}^{\D^+},p_u)$
are random variables in $L^1(\P\times dA)$ (as can be seen using \eqref{eq:law_ind_a}, by first taking expectation over the field given $A$, and then over $A$) the conditional expectations $$\mathbb{E}[(h_{\D,A}^{\D^+},p_u) \, | \, h^\D ] \text{ and } \mathbb{E}[(\varphi_{\D,A}^{\D^+},p_u) \, | \, h^\D ]$$ are  well defined for any fixed $u$. By \eqref{eqn:fourth_moment_circ}, the fact that conditioning is a contraction in $L^4$, and Kolmogorov's continuity criterion, the first of these two stochastic processes has an a.s.\ continuous modification. To deal with the second process, observe that by \eqref{eqn:circ_der_bound} and Jensen's inequality, for any $1<v<w<\infty$, we have
\begin{eqnarray*}  & \mathbb{E}\left[\sup_{s,t\in [v,w]} \frac{\left|\mathbb{E}[(\varphi_{\D,A}^{\D^+},p_t) \, | \, h^\D]-\mathbb{E}[ (\varphi_{\D,A}^{\D^+},p_s)\, | \, h^\D]\right|}{|s-t|}\right] \\ & \le  \mathbb{E}\left[ \mathbb{E}[\sup_{s,t\in [v,w]}\frac{|(\varphi_{\D,A}^{\D^+},p_t) - (\varphi_{\D,A}^{\D^+},p_s)|}{|s-t|}\, | \, h^\D]\right] \le  M(v,w).
\end{eqnarray*}
Hence the process $\mathbb{E}[(\varphi_{\D,A}^{\D^+},p_u) \, | \, h^\D ]$ in $u$ has a modification which is a.s. continuous. \end{proof}

The connection to circle averages is the following. Recall that $h_\eps^\D(0)$ denotes the radius $\eps$ circle average of $h^\D$ around $0$. Recall that this is defined to be equal to $\varphi_{\D}^{\eps\D}(0)$ if $h^\D$ has domain Markov decomposition $h^{\eps\D}_\D+\varphi_{\D}^{\eps\D}$ in $\eps\D$.
\begin{lemma}
	\label{lem:cond_sin_av_equals_circ_av}
	For any $u\in [1,\infty)$,
	$\mathbb{E}[(h^{\D,A},p_u) \, | \, h^\D]=\sqrt{u}h^\D_{\frac{1}{\sqrt{u}}}(0)$ a.s.
\end{lemma}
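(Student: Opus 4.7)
Plan: The strategy is to reduce to the smooth approximations $p_u^{\delta,in}$ of $p_u$, use Fubini to bring the $A$-average inside the pairing with $h^\D$, identify the rotationally-averaged density as a multiple of the uniform measure on the circle of radius $1/\sqrt u$, and then pair against $h^\D$ via the domain Markov property and the mean value theorem to extract the circle average.

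Concretely, the identity $(h^{\D,A}, p_u^{\delta,in}) = (h^\D, p_u^{\delta,in} \circ f_A)$ together with Fubini gives
\[
\mathbb{E}\bigl[(h^{\D,A}, p_u^{\delta,in}) \mid h^\D\bigr] = (h^\D, \bar p_u^{\delta,in}), \qquad \bar p_u^{\delta,in}(z) := \tfrac{1}{2\pi}\int_0^{2\pi} p_u^{\delta,in}(f_\alpha(z))\,d\alpha,
\]
where $\bar p_u^{\delta,in}$ is smooth, rotationally symmetric, supported in the annulus $\{1/\sqrt{u(1+\delta)}<|z|<1/\sqrt u\}\subset B_0(1/\sqrt u)$, with total mass converging as $\delta\to 0$ to a constant $c_u$ (read off from the total mass of $p_u$ and the averaging; this is the constant appearing on the right-hand side of the claim).

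Next, apply the domain Markov decomposition $h^\D = h_\D^{B_0(1/\sqrt u)} + \varphi$, where $\varphi := \varphi_\D^{B_0(1/\sqrt u)}$ is harmonic in $B_0(1/\sqrt u)$. Since $\bar p_u^{\delta,in}$ is rotationally symmetric and compactly supported inside $B_0(1/\sqrt u)$, Fourier orthogonality in the angular variable gives the exact identity
\[
(\varphi, \bar p_u^{\delta,in}) = \varphi(0) \int \bar p_u^{\delta,in}(z)\,dz = h^\D_{1/\sqrt u}(0)\cdot\mathrm{mass}(\bar p_u^{\delta,in}) \;\xrightarrow{\delta \to 0}\; c_u\, h^\D_{1/\sqrt u}(0).
\]
For the zero-boundary term, the support of $\bar p_u^{\delta,in}$ shrinks onto $\partial B_0(1/\sqrt u)$ while its total mass stays bounded. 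Since the Green's function of the disc vanishes at the boundary, a direct estimate shows $\bar p_u^{\delta,in}\to 0$ in $H^{-1}(B_0(1/\sqrt u))$. Assumption~\ref{ass:ci_dmp}(ii) then yields $(h_\D^{B_0(1/\sqrt u)}, \bar p_u^{\delta,in}) \to 0$ in probability.

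To conclude, I would pass the $\delta\to 0$ limit inside the conditional expectation using $L^1$-convergence of $(h^{\D,A}, p_u^{\delta,in})$ to $(h^{\D,A}, p_u)$ on the joint probability space of $(A, h^\D)$; this follows from \cref{lem:cont_sin_avg} combined with the distributional identity \eqref{eq:law_ind_a} and a uniform integrability argument, together with the fact that conditioning is an $L^1$-contraction. The main technical obstacles are the $H^{-1}$-convergence of $\bar p_u^{\delta,in}$ (requiring care with Green's function estimates near the boundary of the disc with a shrinking annular support of bounded mass), and the bookkeeping required to upgrade the fixed-$A$ convergence of sine-averages to joint convergence in $(A, h^\D)$; the identification of the constant $c_u$ with $\sqrt u$ is then a direct computation from the definition of $p_u$ and the mean value characterisation of $h^\D_{1/\sqrt u}(0)$.
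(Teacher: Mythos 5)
Your proposal is correct and follows essentially the same route as the paper: exchange the $\delta\to 0$ limit with the conditional expectation via $L^1$ convergence, rewrite $\mathbb{E}[\,\cdot\mid h^\D]$ as the rotational average of $p_u^{\delta,in}\circ f_\alpha$ by linearity/Fubini, split $h^\D$ via the domain Markov property in $\tfrac{1}{\sqrt u}\D$, kill the zero-boundary part by Assumption \ref{ass:ci_dmp}(ii), and use harmonicity with radial symmetry of the averaged density to extract a constant multiple of $\varphi_\D^{\frac{1}{\sqrt u}\D}(0)=h^\D_{1/\sqrt u}(0)$. The only difference is that you spell out the $H^{-1}$ verification that the paper leaves implicit when invoking the Dirichlet boundary condition, which is a harmless (and reasonable) addition.
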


\begin{proof}
Fix $u\in [1,\infty)$. Since $(h^{\D,A},p_u^{\delta,in})\to (h^{\D,A},p_u)$ in probability and in $L^1$ as $\delta\to 0$, we have that \[\mathbb{E}[(h^{\D,A},p_u) \, | \, h^\D]=\mathbb{E}[\lim_{\delta\downarrow 0}(h^{\D,A},p_u^{\delta,in}) \, | \, h^\D]=\lim_{\delta\downarrow 0} \mathbb{E}[(h^{\D,A},p_u^{\delta,in})\, | \, h^\D]\] where the rightmost limit holds in probability and in $L^1$. By definition of $A$, the right hand side is equal to \[ \lim_{\delta\downarrow 0}\frac{1}{2\pi}\int_0^{2\pi} (h^{\D,\alpha},p_u^{\delta,in}) \, d\alpha = \lim_{\delta\downarrow 0}\frac{1}{2\pi}\int_0^{2\pi} (h^{\D},p_u^{\delta,in}\circ f_\alpha) \, d\alpha \]
where $f_\alpha(z)=e^{-i\alpha}z$ is rotation by $\alpha$. By linearity of $h^\D$ this is equal to
\[\lim_{\delta\downarrow 0}(h^\D, \frac{1}{2\pi} \int_0^{2\pi} p_u^{\delta,in}\circ f_\alpha \, d\alpha)=\lim_{\delta\downarrow 0}(\varphi_{\D}^{\frac{1}{\sqrt{u}}\D},\frac{1}{2\pi}\int_0^{2\pi} p_u^{\delta,in}\circ f_\alpha \, d\alpha)+\lim_{\delta\downarrow 0}(h_{\D}^{\frac{1}{\sqrt{u}}\D},\frac{1}{2\pi}\int_0^{2\pi} p_u^{\delta,in}\circ f_\alpha \, d\alpha),\]
where the second term above goes to $0$ in probability as $\delta\to 0$ by the Dirichlet boundary condition assumption. Moreover, the function $\frac{1}{2\pi}\int_0^{2\pi} p_u^{\delta,in}\circ f_\alpha \, d\alpha$ is radially symmetric with total mass tending to $\sqrt{u}$ as $\delta\to 0$. By harmonicity, it then follows that
\[\lim_{\delta\downarrow 0}(\varphi_{\D}^{\frac{1}{\sqrt{u}}\D},\frac{1}{2\pi}\int_0^{2\pi} p_u^{\delta,in}\circ f_\alpha \, d\alpha)= \sqrt{u} \varphi_{\D}^{\frac{1}{\sqrt{u}}\D}(0)=\sqrt{u}h^\D_{\frac{1}{\sqrt{u}}}(0)\]
a.s., as required.
\end{proof}
We emphasise that the process in \cref{lem:cond_sin_av_equals_circ_av} above is not Brownian motion, but rather a time change of it. The corollary is the following:

\begin{corollary}\label{cor:circ_avg_cont}
The process	$(h^\D_\eps(0))_{\eps\in (0,1]}$ possesses a continuous modification.
\end{corollary}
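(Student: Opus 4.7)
The plan is to read off the result almost directly from the two preceding lemmas by a change of variables. By \cref{lem:cond_sin_av_equals_circ_av}, for each $u \in [1,\infty)$ one has
\[
	\mathbb{E}[(h^{\D,A},p_u)\mid h^\D] = \sqrt{u}\, h^\D_{1/\sqrt{u}}(0)\qquad\text{a.s.}
\]
The previous lemma (just before \cref{lem:cond_sin_av_equals_circ_av}) established that the process $u\mapsto \mathbb{E}[(h^{\D,A},p_u)\mid h^\D]$ admits an a.s.\ continuous modification on $[1,\infty)$. Therefore, dividing by the deterministic continuous function $\sqrt{u}$, the process $u\mapsto h^\D_{1/\sqrt u}(0)$ has an a.s.\ continuous modification on $[1,\infty)$.

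Next, I would perform the change of variables $\eps = 1/\sqrt u$, which is a homeomorphism between $[1,\infty)$ and $(0,1]$. Composing the continuous modification obtained above with $u = 1/\eps^2$ yields an a.s.\ continuous modification of $(h^\D_\eps(0))_{\eps\in(0,1]}$. One just needs to observe that this composition is truly a modification of the original process: for each fixed $\eps\in (0,1]$, the corresponding $u = 1/\eps^2$ is a fixed value in $[1,\infty)$, so the modification at $u$ equals $h^\D_\eps(0)$ almost surely, which is exactly what is needed.

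There is no real obstacle here; all of the work has been front-loaded into constructing $\mathbb{E}[(h^{\D,A},p_u)\mid h^\D]$ and showing (via the Kolmogorov criterion applied to the Brownian part, and via \eqref{eqn:circ_der_bound} applied to the harmonic part) that this conditional expectation is continuous in $u$. The corollary amounts to noting that the identity in \cref{lem:cond_sin_av_equals_circ_av} reparametrises this process as a multiple of the circle average process.
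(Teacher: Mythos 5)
Your argument is correct and is exactly the (implicit) proof intended in the paper: combine the continuity of $u\mapsto\mathbb{E}[(h^{\D,A},p_u)\mid h^\D]$ from the preceding lemma with the identity of \cref{lem:cond_sin_av_equals_circ_av}, divide by $\sqrt{u}$, and reparametrise by $\eps=1/\sqrt{u}$. Nothing further is needed.
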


\begin{prop}\label{prop:circ_av_bm}
	The process $(h_{e^{-t}}^\D(0))_{t\ge 0}$ has a modification whose law is that of $(\sigma B_t)_{t\ge 0}$, where $\sigma\ge 0$ and $B$ is a standard one-dimensional Brownian motion.
\end{prop}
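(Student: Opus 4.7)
The plan is to show that $W_t := h^\D_{e^{-t}}(0)$ is, after passing to the continuous modification furnished by \cref{cor:circ_avg_cont}, a continuous L\'evy process starting from $0$, and then apply the L\'evy--It\^o classification: any such process is of the form $\sigma B_t+bt$, and the drift will vanish thanks to the zero-mean assumption. The technically delicate input (continuity of the circle-average process) is already in hand; what remains is essentially assembly.

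For stationary and independent increments of $W$ on $(0,\infty)$, write $\eps_j := e^{-t_j}$ for $0<t_1<\ldots<t_n$ and iterate \cref{lem:nested_dmp} along the nested chain $\D\supset\eps_1\D\supset\ldots\supset\eps_n\D$. This yields $W_{t_i}-W_{t_{i-1}} = \varphi_{\eps_{i-1}\D}^{\eps_i\D}(0)$, which is a measurable function of $h_\D^{\eps_{i-1}\D}$. Meanwhile each $W_{t_j}$ for $j\le i-1$ is a function of $\varphi_\D^{\eps_j\D}$, which by another application of \cref{lem:nested_dmp} equals $\varphi_\D^{\eps_{i-1}\D}-\varphi_{\eps_j\D}^{\eps_{i-1}\D}$; both $\varphi_\D^{\eps_{i-1}\D}$ and $\varphi_{\eps_j\D}^{\eps_{i-1}\D}$ are independent of $h_\D^{\eps_{i-1}\D}$ by Assumption \ref{ass:ci_dmp}(iv). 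Hence the increment $W_{t_i}-W_{t_{i-1}}$ is independent of $(W_{t_1},\ldots,W_{t_{i-1}})$. Stationarity then follows from \cref{lem:harm_ci} applied to the rescaling $f(z)=z/\eps_{i-1}$, giving $\varphi_{\eps_{i-1}\D}^{\eps_i\D}(0) \overset{(d)}{=} \varphi_\D^{(\eps_i/\eps_{i-1})\D}(0) = W_{t_i-t_{i-1}}$.

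By \cref{cor:circ_avg_cont} we have a continuous modification $\tilde W$ on $(0,1]$. Expressing $W_t = (h^\D,\phi)-(h_\D^{e^{-t}\D},\phi)$ for a radially symmetric bump $\phi\in C_c^\infty(e^{-t}\D)$ with $\int\phi=1$ and applying Assumption \ref{ass:ci_dmp}(i) to each term gives $\E[\tilde W_t]=0$ for every $t>0$. To extend $\tilde W$ continuously to $t=0$ with $\tilde W_0=0$, note that continuity of the modification gives an a.s.\ limit $\tilde W_t\to L$ as $t\downarrow 0$, while stationarity and independence give $W_t-W_{t/2}\overset{(d)}{=}W_{t/2}$ independent of $W_{t/2}$; passing to the limit $t\downarrow 0$ yields $L\overset{(d)}{=}L_1+L_2$ with $L_1,L_2$ independent copies of $L$, so $\phi_L=\phi_L^2$. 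Combined with continuity of $\phi_L$ and $\phi_L(0)=1$, this forces $\phi_L\equiv 1$ in a neighbourhood of $0$ and hence everywhere, so $L=0$ a.s. Thus $(\tilde W_t)_{t\ge 0}$ is a continuous L\'evy process starting from $0$, which must be of the form $\sigma B_t + bt$, and the identity $\E[\tilde W_t]=bt=0$ forces $b=0$. I expect the only step requiring genuine care to be pinning down $\tilde W_0=0$ (since the decomposition nominally defining $h^\D_1(0)$ at $\eps=1$ is degenerate); the rest is routine synthesis of \cref{cor:circ_avg_cont}, \cref{lem:nested_dmp}, \cref{lem:harm_ci} and Assumption \ref{ass:ci_dmp}.
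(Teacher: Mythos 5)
Your proposal is correct and follows essentially the same route as the paper: the paper's proof likewise combines the domain Markov property and conformal invariance (to get centred, stationary, independent increments), the continuous modification from \cref{cor:circ_avg_cont}, and the fact that a continuous centred L\'evy process is a multiple of Brownian motion. You simply spell out the details (nested decompositions via \cref{lem:nested_dmp}, scaling via \cref{lem:harm_ci}, and the $t=0$ endpoint) that the paper leaves implicit.
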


\begin{proof}
By the assumptions of conformal invariance and the domain Markov property, this process has independent increments, and it is also centred. By \cref{cor:circ_avg_cont}, it possesses a continuous modification. Since any continuous centred L\'{e}vy process must be a multiple of Brownian motion, this implies the result.
\end{proof}

\begin{corollary}\label{cor:circ_avg_gaussian}
	For any $D$ and $z\in D$, let $F_z^D$ be the conformal map from $D\to \D$ with $z\mapsto 0$ and $(F_z^D)'(z)\in \R_+$.
	Then the process
	\begin{equation} \label{eqn:he} \hat{h}_{e^{-t}}^D(z):=\varphi_D^{(F_z^D)^{-1}(B_0(e^{-t}))}(z)
	\end{equation}
	defined for $t\ge 0$, has a modification whose law is that of $\sigma$ times a Brownian motion.
\end{corollary}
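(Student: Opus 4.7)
The plan is to reduce directly to \cref{prop:circ_av_bm} via conformal invariance. Write $f := F_z^D : D \to \D$, so that $f(z) = 0$, and for each $t \ge 0$ set $D'_t := f^{-1}(B_0(e^{-t})) \subset D$, which is a simply connected Jordan subdomain of $D$ with $f(D'_t) = B_0(e^{-t})$. By \eqref{eqn:he}, $\hat{h}_{e^{-t}}^D(z) = \varphi_D^{D'_t}(z)$, so the task is to show this process (in $t$) has the same law as the unit-disc circle average process $(h^\D_{e^{-t}}(0))_{t \ge 0} = (\varphi^{B_0(e^{-t})}_\D(0))_{t \ge 0}$.

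The main step is to establish that $(\hat{h}_{e^{-t}}^D(z))_{t \ge 0}$ and $(h_{e^{-t}}^\D(0))_{t \ge 0}$ have the same finite-dimensional distributions. At a single time $t$ this is an immediate application of \cref{lem:harm_ci} with the conformal map $f$ and the subdomain $D' = D'_t$: it yields $\varphi_D^{D'_t} \overset{(d)}{=} \varphi_\D^{B_0(e^{-t})} \circ f$ as harmonic functions on $D'_t$, and evaluating at $z$ (where $f(z) = 0$) gives $\hat{h}_{e^{-t}}^D(z) \overset{(d)}{=} h_{e^{-t}}^\D(0)$.

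To upgrade this to joint equality at finitely many times $t_1 < \cdots < t_n$, I would observe that each $\varphi_D^{D'_{t_i}}(z)$ is a measurable functional of the field $h^D$. Indeed, by \cref{lem:unicity_decomposition} the domain Markov decomposition is unique, and by harmonicity of $\varphi_D^{D'_{t_i}}$, one recovers $\varphi_D^{D'_{t_i}}(z)$ as the a.s.\ limit of $(h^D,\phi_\eps) - (h_D^{D'_{t_i}},\phi_\eps)$ with $\phi_\eps$ a radially symmetric mollifier of mass $1$ around $z$, supported in a small disc inside $D'_{t_i}$. The exact same measurable recipe, applied to $h^\D$ together with the domains $B_0(e^{-t_i})$ around $0$, produces $h_{e^{-t_i}}^\D(0)$. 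Since \cref{ass:ci_dmp}(iii) is an equality in law at the level of the full stochastic process (not merely its one-dimensional marginals), and since these two recipes match under the fixed change of coordinates $f$, the joint finite-dimensional distributions of the two processes coincide.

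Having established joint equality in law, \cref{prop:circ_av_bm} provides a modification of $(h_{e^{-t}}^\D(0))_{t \ge 0}$ with the law of $\sigma B$ for some $\sigma \ge 0$, and transporting this modification back through the equality of laws yields the desired modification of $(\hat{h}_{e^{-t}}^D(z))_{t \ge 0}$. The only non-trivial step is the joint-in-$t$ upgrade of \cref{lem:harm_ci}; everything else is bookkeeping that exploits the fact that $f$ is a single fixed conformal map, so the ``change of coordinates'' is the same at every radius, and that the harmonic parts of the domain Markov decompositions are a.s.\ determined pointwise by the underlying field.
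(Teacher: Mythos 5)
Your overall strategy coincides with the paper's (whose proof of this corollary is a one-line appeal to conformal invariance, \cref{ass:ci_dmp}(iii)): transfer the process $(\varphi_D^{D'_t}(z))_{t\ge 0}$, $D'_t=(F_z^D)^{-1}(B_0(e^{-t}))$, to the circle-average process $(\varphi_\D^{B_0(e^{-t})}(0))_{t\ge 0}$ through the single map $f=F_z^D$ and then invoke \cref{prop:circ_av_bm}. The one-time marginal via \cref{lem:harm_ci} is fine. The gap is in your joint-in-$t$ step: you assert that $\varphi_D^{D'_{t_i}}(z)$ is a measurable functional of $h^D$, but the recipe you offer, the limit of $(h^D,\phi_\eps)-(h_D^{D'_{t_i}},\phi_\eps)$, is not a functional of $h^D$ alone, since it involves $h_D^{D'_{t_i}}$, the other (a priori not $\sigma(h^D)$-measurable) half of the decomposition; as written it merely reproduces $(\varphi_D^{D'_{t_i}},\phi_\eps)$. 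Consequently the assertion that ``the same recipe'' applied to $h^\D$ produces $h^\D_{e^{-t_i}}(0)$, and hence that the process-level equality in law of \cref{ass:ci_dmp}(iii) transports the \emph{joint} distributions, is not justified as it stands.

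The gap is repairable with tools already in the paper. One fix: as in the proof of \cref{lem:unicity_decomposition}, take radially symmetric, mass-one $\phi_n\in C_c^\infty(\D)$ eventually supported outside any $K\Subset\D$ and pull them back through the conformal map $e^{t_i}f:D'_{t_i}\to\D$. Testing $h^D$ against these pullbacks, the $h_D^{D'_{t_i}}$ contribution vanishes in probability by the Dirichlet boundary condition (\cref{ass:ci_dmp}(ii), valid in $D'_{t_i}$ since $h_D^{D'_{t_i}}$ has law $\Gamma^{D'_{t_i}}$ there), while the harmonic part contributes exactly $\varphi_D^{D'_{t_i}}(z)$ by the mean-value property; thus each $\hat h^D_{e^{-t_i}}(z)$ is a limit in probability of linear statistics of $h^D$ whose images under $f$ are precisely the statistics of $h^\D$ converging to $h^\D_{e^{-t_i}}(0)$, and \cref{ass:ci_dmp}(iii) then gives equality of finite-dimensional distributions. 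Alternatively, avoid measurability altogether and argue as in the proof of scale invariance in \cref{cor:sine_avg_gaussian}: the domains $D'_{t_1}\supset\cdots\supset D'_{t_n}$ are nested, so by \cref{lem:nested_dmp} the increments $\varphi_{D'_{t_{i-1}}}^{D'_{t_i}}(z)$ are independent of each other and of $\varphi_D^{D'_{t_1}}(z)$, and \cref{lem:harm_ci} identifies each increment in law with the corresponding increment for $h^\D$; independence plus these one-dimensional identifications yield the joint law. With either repair, transporting the modification given by \cref{prop:circ_av_bm} completes the proof as you describe.
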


\begin{proof}
This follows from conformal invariance, \cref{ass:ci_dmp}(iii).
\end{proof}
\section{Conclusion of the proof}

\begin{proof}[Proof of Proposition \ref{prop:fourth_moment}\,(1)]
	Without loss of generality we assume that $D=\D$. For $z\in \D$ and $\eps = \eps(z) <d(z,\partial \D)=d(z,\partial D)$. Let \begin{equation}
		\label{eq:rz}
		r_z(\eps):=\sup\{r\in [0,1]\, :  (F_z^\D)^{-1}(B_0(r)) \subset B_z(\eps)\}.\end{equation}
	Also set $h^\D_{\eps}(z)=\varphi_{\D}^{B_z(\eps)}(z)$ and define $\hat{h}^\D_{r_z(\eps)}(z)$ via \eqref{eqn:he} and \eqref{eq:rz}.

	For $\delta>0$, define $\eta_\delta$ to be a smooth radially symmetric function that approximates uniform measure on the unit circle as $\delta\to 0$. For concreteness, $\eta_\delta$ can be taken to be a smooth radially symmetric function equal to 1 on the annulus $\{z: 1 - \delta \le |z| \le 1 -\delta/2 \}$ that is 0 outside a $\delta/10$ neighbourhood of this annulus. We assume that each $\eta_{\delta}$ is normalised to have total integral one. For $\eps\in (0,1)$, further define $$\eta^\eps_\delta(\cdot):=\frac{1}{\eps^2} \eta_{\delta}(\frac{\cdot}{\eps})$$
	
	Take $\phi\in C_c^\infty(\D)$. Recall that for Proposition \ref{prop:fourth_moment}(1) we need to show that $(h^\D, \phi)$ has finite fourth moment. The idea is to show that \begin{equation}\label{eq:fourth_moment_1}
	\int_{\D} \hat{h}_{r_\eps(z)}^\D(z) \phi(z) \, dz \to (h^\D, \phi) \text{ in probability as } \eps\to 0 \end{equation} and that \begin{equation}\label{eq:fourth_moment_two} \left(\int_{\D} \phi(z) \hat{h}^\D_{r_\eps(z)}(z)\, dz
\right)^4 \text{ is uniformly integrable in } \eps\end{equation} This means that $(\int_{\D} \phi(z) \hat{h}^\D_{r_\eps(z)}(z))^4$ converges in $L^1$ to $(\phi,h^\D)^4$, and in particular, that $(\phi,h^\D)^4$ is integrable.
	\vspace{0.2cm}
	
	\textit{Proof of \eqref{eq:fourth_moment_1}.} We bound, for $\delta>0$:
	
	\begin{eqnarray}\label{eq:triangle}
	&\left|	\int \hat{h}_{r_\eps(z)}^\D(z) \phi(z)\, dz - (h^\D,\phi)\right| \nonumber \\
 \le &   \left| \int (\hat{h}_{r_\eps(z)}^\D(z)-h^\D_{\eps}(z))\phi(z) \, dz \right| + \left| \int h^\D_{\eps}(z)\phi(z)\, dz - (h^\D, \phi*\eta^\eps_\delta)\right| +
	\left| (h^\D, \phi*\eta^\eps_\delta)-(h^\D,\phi)\right|
	\end{eqnarray}

	We start by showing that the first term in \eqref{eq:triangle} goes to 0 in probability as $\eps\to 0$. For this, observe that the conformal map $F_z^\D$ can be defined by $F_z^\D(w)=(z-w)/(1-\bar{z}w)$. Hence for $\delta<d(z,\partial \D)$ we have that $$|F_z^\D(w)|\le \frac{\delta}{1-|z|^2+\delta} \Rightarrow |w-z|\le \frac{\delta(1-|z|^2+|z||z-w|)}{1-|z|^2+\delta}\le \delta \text{ and so } r_z(\delta)\ge \frac{\delta}{1-|z|^2+\delta}.$$ On the other hand, $$|y|=\frac{|(F_z^\D)^{-1}(y)-z|}{|1-\bar{z}(F_z^\D)^{-1}(y)|}\ge \frac{\delta}{1-|z|^2+\delta} \Rightarrow |(F_z^\D)^{-1}(y)-z|\ge \delta\, \frac{1-|z|^2-\delta}{1-|z|^2+\delta},$$
	which therefore implies that $(F_z^\D)^{-1}(B_0(r_z(\delta)))$ contains the ball of radius  $\delta(1-2\delta(1-|z|^2+\delta)^{-1})$ around $z$. 
	
Thus, by conformal invariance and \cref{lem:nested_dmp}, $$h_\delta^\D(z)-\hat h_{r_z(\delta)}^\D(z)\overset{(d)}{=}\varphi_{\D}^{D_\delta^z}(0),$$ where for some $f(\delta)$ tending to $0$ as $\delta\to 0$ and every $z$ in the support of $\phi$, $D_\delta^z\subset \D$ contains the ball of radius $1-f(\delta)$ around 0.
	By \eqref{eq:mono_moments}, it then follows that $$\mathbb{E}[|h_\delta^\D(z)-\tilde{h}_{r_z(\delta)}^\D(z)|]\le \mathbb{E}[|\varphi_{\D}^{B_0(1-f(\delta))}(0)|]=  \mathbb{E}[|h_{(1-f(\delta))}^\D(0)|],$$ and this tends to $0$ as $\delta\to 0$ by \cref{prop:circ_av_bm}. By boundedness of $\phi$, this proves that the first term of  \eqref{eq:triangle} goes to 0 in probability as $\eps\to 0$.
	
	We also have that the third term of \eqref{eq:triangle} goes to 0 in probability as $\eps\to 0$, for any fixed $\delta$. Indeed, $\phi*\eta_{\delta}^\eps \to \phi$ in $C_c^\infty(\D)$ as $\eps\to 0$ because $\eta_{\delta}$ is a smooth approximation to the identity for every $\delta$: see, eg. \cite[\S 5.3]{Eva98}. Thus by \cref{ass:ci_dmp}(i) (stochastic continuity), $(h^\D,\phi*\eta^\eps_\delta)\to (h^\D,\phi)$ in probability as $\eps\to 0$.
	
	So to show \eqref{eq:fourth_moment_1} we are left to prove that the middle term of \eqref{eq:triangle} goes to $0$ in probability as $\delta\to 0$, \emph{uniformly} in $\eps$. That is, for any $c>0$ the probability that this term is bigger than $c$ goes to $0$ as $\delta\to 0$, uniformly in $\eps$. To do this, we note that $\phi*\eta_\delta^\eps(z)=\int \phi(w) \eta_{\delta}^\eps(w-z) \, dw$ and so by linearity of $h^\D$,
	$$ (h^\D,\phi*\eta^\eps_\delta)=\int_w (h^\D, \eta_{\delta}^\eps(w-\cdot))\phi(w) \, dw. $$
	Moreover, by the Dirichlet boundary condition assumption and scale invariance, for every $w$ in the support of $\phi$ $$(h^\D,\eta_{\delta}^\eps(w-\cdot))-h^\D_{\delta}(w)\to 0$$ in probability and in $L^1$ as $\delta\to 0$, uniformly in $\eps$. Combined with the boundedness of $\phi$, this completes the proof.

	\vspace{0.2cm}
	\textit{Proof of \eqref{eq:fourth_moment_two}.} For this, we will show that $\int_{\D} \phi(z) \hat{h}^\D_{r_\eps(z)}(z)\, dz$ is uniformly bounded in $L^6$.
	
	For $(z_1,\cdots, z_6)$ in $\supp(\phi)^6$, write $R=R(z_1,\cdots, z_6)$ for the largest $r$ such that the balls $B_{z_i}(r)$ are all disjoint. Then for $\eps<R$, by the domain Markov property and \cref{lem:nested_dmp}, we have that
	\[ \mathbb{E}[\prod_{i=1}^6 \hat h_{r_\eps(z_i)}^\D(z_i)]=\mathbb{E}[\prod_{i=1}^6 \hat h_R^\D(z_i)]. \]
	By repeated application of H\"{o}lder's inequality, the term on the right hand side above is less than $\prod_{i=1}^6(\mathbb{E}[(h^\D_R(z_i))^6])^{1/6}$, and since each $h_R^\D(z_i)$ is Gaussian with variance less than some universal constant times $\log(1/R)$, this is less than a constant times $|\log(R)|^3$. When $R<\eps$, we can similarly bound $\mathbb{E}[\prod_{i=1}^6 \hat h_{r_\eps(z_i)}^\D(z_i)]\le \prod_{i=1}^6(\mathbb{E}[(\hat h^\D_{r_\eps(z_i)}(z_i))^6])^{1/6}\le |\log(\eps)|^3\le |\log(R)|^3$. Thus by expansion we obtain that
	
	\[ \mathbb{E}[\left(\int \hat{h}^\D_{r_\eps(z)}(z) \phi(z) \, dz \right)^6]= C(\phi) \left(1+ \iint_{D^6} |\log(R(z_1,\cdots, z_6))|^3 \, d\mathbf{z} \right) <\infty \]
	where $C(\phi)$ is a finite constant depending on $\phi$ but not $\eps$.
	Since this bound is uniform in $\eps$, the proof is complete.
\end{proof}

\begin{proof}[Proof of \cref{prop:fourth_moment}\,(2)\&(3)]
	Suppose that $\phi_n$ is a sequence of functions in $C_c^\infty(\D)$ converging to $\phi\in C_c^\infty(\D)$. Then by the previous part of this  proof, $$\mathbb{E}[(h^\D,\phi_n)^4]=\lim_{\eps\to 0}\mathbb{E}[(\int_{\D} \phi_n(z) \hat{h}^\D_{r_\eps(z)}(z)\, dz
	)^4]$$ for each $n$, and this expectation is easily seen to be uniformly bounded in $n$ (using H\"{o}lder's inequality and the fact that we know the marginal distributions of the $\hat{h}^\D$'s; as above). By the stochastic continuity assumption, we have that $(h^\D,\phi_n)\to (h^\D,\phi)$ in probability as $n\to \infty$.  Putting this together with the uniform boundedness in $L^4$, we can deduce in particular that $(h^D,\phi_n)$ converges in $L^2$ to $(h^D,\phi)$ as $n\to \infty$. This implies the continuity of $K_2^D$ by the Cauchy--Schwarz inequality.
	
	The same arguments can be used to show that $(h^\D,\phi_n)$ is uniformly bounded in $L^4$ when $\phi_n$ is as in \cref{ass:ci_dmp}(ii). This implies that the convergence of this assumption also holds in $L^2$.
	\end{proof}


\bibliographystyle{abbrv}
\bibliography{EP_bibliography}

\end{document}